\documentclass[11pt]{amsart}
\usepackage{amssymb}
\usepackage{amsmath}
\usepackage{amsthm}
\usepackage{amsrefs}
\usepackage{mathrsfs}
\usepackage{faktor}
\usepackage{tikz}

\usetikzlibrary{arrows}
\usetikzlibrary{shapes.geometric}
\usetikzlibrary{decorations.pathreplacing,decorations.markings}
\usepackage[utf8]{inputenc}
\usepackage[top=1in,bottom=1in,right=1in,left=1in]{geometry}
\usepackage{hyperref}
\usepackage{enumitem}
\usepackage{wasysym}

\usepackage[capitalise]{cleveref}

\newtheorem{theorem}{Theorem}[section]
\newtheorem{lem}[theorem]{Lemma}
\newtheorem{proposition}[theorem]{Proposition}
\newtheorem{cor}[theorem]{Corollary}

\theoremstyle{definition}
\newtheorem{dfn}[theorem]{Definition}
\newtheorem{ex}[theorem]{Example}
\newtheorem{rmk}[theorem]{Remark}
\newtheorem{ntn}[theorem]{Notation}

\numberwithin{theorem}{section}

\newenvironment{theorem_no_number}[1][]{\begin{trivlist}
\item[\hskip \labelsep {\bfseries Theorem \def\temp{#1}\ifx\temp\empty  #1\else  #1\fi
.}] \itshape}  {\end{trivlist}}

\DeclareMathOperator{\End}{End}

\newcommand{\ns}{\mathsf{NSPACE}}

\setlength{\parindent}{0em}
\setlength{\parskip}{1em}

\title{Quadratic Diophantine equations, the Heisenberg
group and formal languages}

\author{Alex Levine}

\address{Department of Mathematics, Alan Turing Building, University of Manchester, M13 9PL}

\email{alex.levine@manchester.ac.uk}

\keywords{quadratic Diophantine equations, equations in groups, EDT0L languages,
nilpotent groups}

\subjclass[2020]{03D05, 20F10, 20F65, 20F18, 68Q45, 11D09}

\begin{document}

\begin{abstract}
 	We express the solutions to quadratic equations with two variables in the
 	ring of integers using EDT0L languages. We use this to show that EDT0L
 	languages can be used to describe the solutions to one-variable equations in
 	the Heisenberg group. This is done by reducing the question of solving a
 	one-variable equation in the Heisenberg group to solving an equation in the
 	ring of integers, exploiting the strong link between the ring of integers and
	nilpotent groups.
\end{abstract}

\maketitle

\section{Introduction}
	Equations in nilpotent groups, and equations in the ring of integers are
	deeply linked. This has been demonstrated repeatedly since Roman'kov used
	Matijasevič's result that the satisfiability of systems of quadratic equations
	in integers is undecidable \cite{Matijasevic} to show that the satisfiability
	of systems of equations in various free nilpotent groups is undecidable
	\cite{romankov_undecidable_first}. The proofs that many other nilpotent groups
	have an undecidable satisfiability of equations involve a similar method of
	reducing the question to systems of quadratic equations in integers
	\cites{romankov_undecidable_class_2, random_nilpotent_eqns,
	duchin_liang_shapiro}. After this Matijasevič's result can be applied. Duchin,
	Liang and Shapiro's positive result that the satisfiability of single
	equations is decidable in class \(2\) nilpotent groups with a virtually cyclic
	commutator subgroup \cite{duchin_liang_shapiro}, involves reducing the problem
	to single quadratic equations in integers, and then applying Siegel's result
	that the satisfiability of such equations is decidable \cite{Siegel}. There
	are other aspects of equations that can be studied beyond decidability, such
	as the structure of the set of solutions, and one way of investigating this is
	to express the set of solutions as a formal language, which is the purpose of
	this paper.

	Formal languages have been used in group theory in a variety of settings for
	over the last few decades. Perhaps one of the most striking uses of languages
	in groups was Anisimov's result of 1971 showing that the set of words over a
	given finite generating set for a group \(G\) that represent the identity
	(called the \textit{word problem} of \(G\)) forms a regular language if and
	only if \(G\) is finite \cite{Anisimov}. Muller, Schupp and Dunwoody later
	showed that a group has a context-free word problem if and only if it is
	virtually free \cites{muller_schupp, fp_accessible}. Following Muller, Schupp
	and Dunwoody's result, the word problem has been generalised in a number of
	different ways, including to semigroups \cites{gilbert_noonanheale, kambites,
	gen_word_problem}.

	Languages have also been used in a number of other different settings. Groups
	that admit regular geodesic normal forms must have rational growth series.
	Regular languages can be used to describe \((\lambda, \ \mu)\)-quasi-geodesics
	in hyperbolic groups if \(\lambda\) and \(\mu\) are rational
	\cite{hyp_quasigeo_reg}. Languages have also been used to study conjugacy in
	various classes of groups \cites{conj_langs, CF_conj}. The complement of the
	word problem has also been studied for a wide variety of groups, including
	Thompson's groups, the Grigorchuk group and Baumslag-Solitar groups
	\cites{bishop_elder_journal, appl_L_systems_GT, CF_coWP, coCF_univ, coCF_V}.

	In 2016, Ciobanu, Diekert and Elder showed that the solutions to a system of
	equations in a free group can be expressed as an EDT0L language
	\cite{eqns_free_grps}. This led to a number of results showing that solutions
	to systems of equations in various classes of groups are EDT0L, starting with
	right-angled Artin groups in the same year \cite{EDT0L_RAAGs}. Virtually free
	groups \cite{VF_eqns}, hyperbolic groups \cite{eqns_hyp_grps}, virtually
	abelian groups \cite{VAEP}, and virtually direct products of hyperbolic groups
	\cite{EDT0L_extensions} all followed later.

  We consider single equations in the Heisenberg group in one variable. The fact
  that satisfiability of equations with one variable in the Heisenberg group is
  decidable was first shown by Repin \cite{repin}. Duchin, Liang and Shapiro
  generalised this to all single equations in any number of variables in a class
  \(2\) nilpotent groups with a virtually cyclic commutator subgroup
  \cite{duchin_liang_shapiro}, however Roman'kov showed that the restriction on
  the commutator subgroup cannot be relaxed \cite{romankov_undecidable_class_2}.
  We show that the solutions to these equations, when written as words in
  Mal'cev normal form are EDT0L, with an EDT0L system constructible in
  non-deterministic polynomial space.

	\begin{theorem_no_number}[\ref{Heisenberg_eqns_EDT0L_thm}]
		Let \(L\) be the solution language to a single equation with one
		variable in the Heisenberg group, with respect to the Mal'cev generating
		set and normal form. Then
		\begin{enumerate}
			\item The language \(L\) is EDT0L;
			\item An EDT0L system for \(L\) is constructible in \(\ns(n \mapsto
			n^8(\log n)^2)\), where the input size is the length of the equation as
			an element of \(H(\mathbb{Z}) \ast F(X)\).
		\end{enumerate}
	\end{theorem_no_number}

	Proving Theorem \ref{Heisenberg_eqns_EDT0L_thm} involves reducing the problem
	of solving one-variable equations in the Heisenberg group to describing
	solutions to two-variable quadratic equations in the ring of integers. This
	uses a similar construction to the method of Duchin, Liang and Shapiro, which
	was used to show that the satisfiability of single equations in any class
	\(2\) nilpotent group with a virtually cyclic commutator subgroup is decidable
	\cite{duchin_liang_shapiro}.

	Despite the extensive use EDT0L languages have had in describing solutions to
	group equations, there have been no attempts to describe solutions to
	equations in the ring of integers using EDT0L languages, other than linear
	equations, which are just equations in an abelian group. In order to make
	progress studying equations in the Heisenberg group, we will have to first
	learn to what extent EDT0L languages can be used to describe solutions to
	quadratic equations in the ring of integers. Our result for equations in the
	Heisenberg group involves reducing to the two-variable case of quadratic
	equations in integers.

	\begin{theorem_no_number}[\ref{quad_eqn_2_var_integers_EDT0L_thm}]
		Let
		\begin{equation}
			\label{intro_quad_eqn}
			\alpha X^2 + \beta XY + \gamma Y^2 + \delta X + \epsilon Y + \zeta = 0
		\end{equation}
		be a two-variable quadratic
		equation in the ring of integers, with a set \(S\) of solutions. Then
		\begin{enumerate}
			\item The language \(L = \{a^x \# b^y \mid (x, \ y) \in S\}\) is EDT0L
			over the alphabet \(\{a, \ b, \ \#\}\);
			\item Taking the input size to be \(\max(|\alpha|, \ |\beta|, \ |\gamma|,
			\ |\delta|, \ |\epsilon|, \ |\zeta|)\), an EDT0L system for \(L\) is
			constructible in \(\ns(n \mapsto n^4 \log n)\).
		\end{enumerate}
	\end{theorem_no_number}

	We prove this theorem using Lagrange's method. This involves reducing an
	arbitrary two-variable quadratic equation to a generalised Pell's equation
	\(X^2 - DY^2 = N\). This again reduces to Pell's equation \(X^2 - DY^2 = 1\),
	the set of solutions of which is well-understood. The reduction involves
	writing solutions to the two variable quadratic equation
	\eqref{intro_quad_eqn} in the form \(\frac{\lambda x + \mu y + \xi}{\eta}\),
	where \((x, \ y)\) is a solution to some computable Pell's equation, and
	\(\lambda, \ \mu, \ \xi, \ \eta \in \mathbb{Z}\) with \(\eta \neq 0\) are all
	computable.

	Showing that the set of solutions to Pell's equation can be expressed as an
	EDT0L language is not too difficult. However, studying \(\frac{\lambda x + \mu
	y + \xi}{\eta}\) requires more work, particularly when the signs of
	\(\lambda\), \(\mu\) and \(\xi\) are not all the same, or when \(|\eta| \geq
	2\). To deal with the division, we use the concept of \(\#\)-separated EDT0L
	systems, first introduced in \cite{EDT0L_extensions}, and work in the world of
	EDT0L languages.

	Understanding \(\lambda x + \mu y + \xi\), when \(\lambda\), \(\mu\) and
	\(\xi\) are not all the same sign is more difficult to resolve by manipulating
	EDT0L systems. This is because we represent the integer \(n\) by \(a^n\); that
	is, a word of length \(n\) comprising \(n\) occurrences of the letter \(a\)
	(when \(n \geq 0\)) or \(n\) occurrences of the letter \(a^{-1}\) (when \(n
	\leq 0\)). Adding \(4\) to \(-2\) corresponds to concatenating \(a^4\) with
	\(a^{-2}\), resulting in \(a^4 a^{-2}\), which is not equal as a word to
	\(a^2\). We cannot simply `cancel' \(a\)'s and \(a^{-1}\)s either; in general
	the language obtained by freely reducing all words in an EDT0L language is not
	EDT0L (it need not even be recursive). Therefore, we work with facts about the
	solutions themselves to show that for fixed integers \(\lambda\), \(\mu\) and
	\(\xi\), the set
	\[
		\{\lambda x + \mu y + \xi \mid (x, \ y) \text{ is a solution to } X^2
		- DY^2 = 1\}
	\]
	is sufficiently well-behaved that we can describe it using an EDT0L language.
	We can then apply our method for the `division' to obtain the desired
	language.

	EDT0L languages were defined by Rozenberg in 1973
	\cite{ET0L_EDT0L_def_article} as members of the broad collection of languages
	called L-systems. L-systems were introduced by Lindenmayer for the study of
	growth of organisms. A key aspect of L-systems is their ability to perform
	parallel computation, which was useful for the study of organisms, but has
	also been effective for expressing solutions to equations, as the solutions to
	each individual variable can be computed in parallel. EDT0L systems interested
	computer scientists in the 1970s, with a variety of papers proving different
	results from pumping lemmas to alternative definitions \cites{CX0L_det,
	EPDT0L_to_EDT0L, CF_not_EDT0L}. Since Ciobanu, Diekert and Elder's paper
	showing solutions to systems of equations in free groups can be expressed as
	EDT0L languages, interest in the class has been reinvigorated, leading to a
	number of recent publications on EDT0L languages \cites{EDT0L_permuations,
	appl_L_systems_GT}, in addition to the previously mentioned papers on
	equations.

	We cover the preliminaries of the considered topics in Section
	\ref{prelim_sec}. In Section \ref{division_EDT0L_sec}, we prove our result
	about `division' of EDT0L languages by a constant that is a key part of the
	proof that the solutions to two-variable quadratic equations in the ring of
	integers are EDT0L, which appears in Section \ref{quad_eqns_sec}. In Section
	\ref{pell_eqn_sec}, we study the solutions to Pell's equation, and their
	images under linear functions. The proof of the fact that solutions to
	two-variable quadratic equations are EDT0L involves reducing to the case of
	Pell's equation. This reduction is contained in Section \ref{quad_eqns_sec}.
	Section \ref{Heisenberg_eqns_sec} includes the reduction from equations in the
	Heisenberg group to quadratic equations in the ring of the integers, and the
	proof that single equations in one variable in the Heisenberg group are
	expressible as EDT0L languages.

	\begin{ntn}
		We introduce a variety of notation we will frequently use.
		\begin{enumerate}
			\item Functions will be written to the right of their arguments;
			\item If \(S\) is a subset of a group, we define \(S^\pm = S \cup
			S^{-1}\);
			\item We use \(\varepsilon\) to denote the empty word;
			\item For elements \(g\) and \(h\) of a group \(G\), the commutator is
			defined by \([g, \ h] = g^{-1}h^{-1}gh\).
		\end{enumerate}
	\end{ntn}

\section{Preliminaries}
	\label{prelim_sec}

	\subsection{Nilpotent groups}

	We start with the definitions of a nilpotent group and the Heisenberg
	group. For a comprehensive introduction to nilpotent groups we refer the
	reader to \cite{theory_nil_groups}.

	\begin{dfn}
		Let \(G\) be a group. Define \(\gamma_i(G)\) for all \(i \in
		\mathbb{Z}_{\geq 0}\) inductively as follows:
		\begin{align*}
			& \gamma_0(G) = G \\
			& \gamma_i(G) = [G, \ \gamma_{i - 1}(G)] \text{ for } i > 0.
		\end{align*}
		The subnormal series \((\gamma_i(G))_i\) is called the \textit{lower central
		series} of \(G\). We call \(G\) \textit{nilpotent} of \textit{class} \(c\)
		if \(\gamma_c(G)\) is trivial.
	\end{dfn}

	\begin{dfn}
		The \textit{Heisenberg group} \(H(\mathbb{Z})\) is the class \(2\)
		nilpotent group defined by the presentation
		\[
			H(\mathbb{Z}) = \langle a, \ b, \ c \mid c = [a, \ b], \  [a, \ c] =
			[b, \ c] = 1 \rangle.
		\]
		Note that whilst the generator \(c\) is redundant, it is often easier to
		work with the generating set \(\{a, \ b, \ c\}\) than \(\{a, \ b\}\).

		The \textit{Mal'cev generating set} for the Heisenberg group is the set
		\(\{a, \ b, \ c\}\).
	\end{dfn}

	\subsection{Mal'cev normal form}

	  We now define the normal form that we will be using to represent our
	  solutions. This is used in \cite{duchin_liang_shapiro}, and we include the
	  proof of uniqueness and existence for completeness.

		The following facts about commutators in class \(2\) nilpotent groups will
		be used to induce the methods for `pushing' \(b\)s past \(a\)s in the
		Heisenberg group.

		\begin{lem}
			\label{commutator_lem}
			Let \(G\) be a class \(2\) nilpotent group, and \(g, \ h \in G\). Then
			\begin{enumerate}
				\item \([g^{-1}, \ h^{-1}] = [g, \ h]\),
				\item \([g^{-1}, \ h] = [g, \ h]^{-1}\).
			\end{enumerate}
		\end{lem}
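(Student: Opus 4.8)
The plan is to use the standard commutator identities together with the fact that in a class $2$ nilpotent group $G$, the commutator subgroup $[G,G]$ is central. First recall the elementary identities, valid in any group, $[gh, k] = {}^{h^{-1}}[g,k] \cdot [h,k]$ and $[g, hk] = [g,k] \cdot {}^{k^{-1}}[g,h]$, where ${}^x y = x^{-1}yx$ (or the conjugation convention matching the paper's right-action notation). In a class $2$ group these simplify because every commutator is central, so conjugation acts trivially on commutators: we get the bilinearity relations $[gh, k] = [g,k][h,k]$ and $[g, hk] = [g,h][g,k]$. These are the only tools needed.

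For part (1), I would start from $1 = [g g^{-1}, h] = [g, h]\,[g^{-1}, h]$ (using the first bilinearity relation), which gives $[g^{-1}, h] = [g,h]^{-1}$; this is exactly part (2), so I would prove (2) first. Then apply the second bilinearity relation similarly: $1 = [g^{-1}, h h^{-1}] = [g^{-1}, h]\,[g^{-1}, h^{-1}]$, so $[g^{-1}, h^{-1}] = [g^{-1}, h]^{-1} = \bigl([g,h]^{-1}\bigr)^{-1} = [g,h]$, giving part (1). Alternatively (1) follows directly by applying (2) twice, once in each coordinate.

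There is no real obstacle here; the only thing to be careful about is fixing the commutator convention ($[g,h] = g^{-1}h^{-1}gh$ versus $ghg^{-1}h^{-1}$) and the conjugation/action convention consistently, since the paper writes functions on the right — but whichever convention is chosen, centrality of $[G,G]$ collapses the correction terms and the computation goes through verbatim. I would state the bilinearity relations as the first line of the proof (citing them as standard, e.g. from \cite{theory_nil_groups}, or deriving them in one line), then derive (2) and (1) as above in two or three short displayed equations.
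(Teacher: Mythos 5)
Your proof is correct, but it takes a slightly different route from the paper. The paper argues by brute-force word manipulation: it expands \([g^{-1},h^{-1}]\) and \([g^{-1},h]\) as words, inserts a trivial factor such as \(ghh^{-1}g^{-1}\) or \(gg^{-1}\), recognises a commutator inside the word, and uses centrality to slide it out — each part is proved directly and independently. You instead first record the class-\(2\) bilinearity relations \([gh,k]=[g,k][h,k]\) and \([g,hk]=[g,h][g,k]\) (the centrality of \([G,G]\) killing the conjugation terms in the general commutator identities), then obtain (2) from \(1=[gg^{-1},h]=[g,h][g^{-1},h]\) and deduce (1) from (2) via \(1=[g^{-1},hh^{-1}]\). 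Both arguments rest on exactly the same fact (commutators are central), and your bilinearity relations are essentially what the paper's explicit manipulations prove in the special case at hand; your version is a bit more conceptual and makes the logical dependence (1) \(\Leftarrow\) (2) explicit, while the paper's is self-contained at the level of letters and needs no auxiliary identities. One caution: the general identity you quote, \([gh,k]={}^{h^{-1}}[g,k]\cdot[h,k]\), has the conjugation on the wrong side for at least one of the two commutator conventions, but since you only ever invoke the class-\(2\) simplifications where the conjugation disappears, this does not affect the argument — just state the simplified relations (or derive them in one line, as you suggest) rather than the general ones.
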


		\begin{proof} For (1), since commutators are central,
				\[
					[g^{-1}, \ h^{-1}] = ghg^{-1} h^{-1} = ghg^{-1} h^{-1} gh h^{-1}
					g^{-1} = gh[g, \ h] h^{-1} g^{-1} = [g, \ h] ghh^{-1} g^{-1} = [g, \
					h].
				\]
				Similarly, for (2), we have
				\[
					[g^{-1}, \ h] = gh^{-1} g^{-1} h = g h^{-1} g^{-1} h g g^{-1} =
					g [g, \ h]^{-1} g^{-1} = gg^{-1} [g, \ h]^{-1} = [g, \ h]^{-1}.
				\]
		\end{proof}

		Using Lemma \ref{commutator_lem}, we now have a number of useful
		identities for `pushing' \(b\)s past \(a\)s in expressions over the
		Mal'cev generating set.

		\begin{lem}
			\label{malcev_gen_rules_lem}
			The following identities hold for the Mal'cev generators of the
			Heisenberg group:
			\begin{align*}
				& b a = a b c \\
				& b a^{-1} = a^{-1} b c^{-1} \\
				& b^{-1} a = a b^{-1} c^{-1} \\
				& b^{-1} a^{-1} = a^{-1} b^{-1} c.
			\end{align*}
		\end{lem}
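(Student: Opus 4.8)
The statement to prove (Lemma \ref{malcev_gen_rules_lem}) is a list of four identities in the Heisenberg group relating $a^{\pm 1}$ and $b^{\pm 1}$ via the central generator $c = [a,b]$. The natural approach is a direct computation: unwind the definition $c = [a,b] = aba^{-1}b^{-1}$, equivalently $ba = abc$ after noting $[a,b] = aba^{-1}b^{-1}$ gives $ab = ba[a,b]^{-1}$... so one must be careful with the commutator convention. I would first fix the convention: from the presentation, $c = [a,b]$; I'll take $[a,b] = a^{-1}b^{-1}ab$ or $aba^{-1}b^{-1}$ — whichever makes $ba = abc$ come out correctly — and state it explicitly. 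Then the first identity $ba = abc$ is essentially the definition rearranged. The remaining three identities follow by combining this with Lemma \ref{commutator_lem}.

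Concretely, the steps in order: (1) Establish $ba = abc$ directly from $c = [a,b]$ and the rearrangement, using that $c$ is central (so it can be moved freely). (2) For $ba^{-1} = a^{-1}bc^{-1}$: start from $ba = abc$, and manipulate — e.g. left-multiply and right-multiply to isolate $ba^{-1}$, using centrality of $c$; alternatively observe that $[b,a^{-1}] = [b,a]^{-1} = c^{-1}$ by part (2) of Lemma \ref{commutator_lem} (with roles of the two arguments as needed), so $ba^{-1} = a^{-1}b[b,a^{-1}] = a^{-1}bc^{-1}$. (3) For $b^{-1}a = ab^{-1}c^{-1}$: similarly $[b^{-1},a] = [b,a]^{-1} = c^{-1}$, giving $b^{-1}a = ab^{-1}c^{-1}$. (4) For $b^{-1}a^{-1} = a^{-1}b^{-1}c$: use $[b^{-1},a^{-1}] = [b,a] = c$ by part (1) of Lemma \ref{commutator_lem}. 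Each identity is a two- or three-line computation of the same flavor as the proof of Lemma \ref{commutator_lem} already given.

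**The main obstacle.** There is no real obstacle here — this is a routine lemma. The only thing requiring care is bookkeeping with the commutator convention: whether $[a,b] = aba^{-1}b^{-1}$ or $a^{-1}b^{-1}ab$, and correspondingly whether $ba = abc$ or $ba = abc^{-1}$. I would pin this down at the outset so that the four displayed identities are internally consistent, and then the rest is mechanical. Since $c$ is central, one never has to worry about the order in which $c$ appears, which removes the only other possible source of friction. I would present the proof as: derive $ba = abc$ from the presentation, then obtain the other three by substituting $a \mapsto a^{-1}$ and/or $b \mapsto b^{-1}$ and invoking Lemma \ref{commutator_lem} to track the sign on $c$.
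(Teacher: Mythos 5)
Your proposal is correct and follows essentially the same route as the paper: the paper also derives each identity by writing the left-hand side as a prefix times a commutator (e.g.\ $ba^{-1} = a^{-1}b[b,\ a^{-1}]$) and then evaluates the sign of the commutator via Lemma \ref{commutator_lem}, with the first identity coming straight from $c=[a,\ b]$ and centrality. Your remark about pinning down the commutator convention, and about needing the variant $[g,\ h^{-1}]=[g,\ h]^{-1}$ obtained by combining parts (1) and (2) of Lemma \ref{commutator_lem}, is exactly the bookkeeping the paper's computation relies on implicitly.
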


		\begin{proof}
			We have
			\begin{align*}
				& b a = a b b^{-1} a^{-1} b a = a b c \\
				& b a^{-1} = a^{-1} b b^{-1} a b a^{-1} = a^{-1} b [b, \ a^{-1}]
				= a^{-1} b c^{-1} \\
				& b^{-1} a = a b^{-1} b a^{-1} b^{-1} a = a b^{-1} [b^{-1}, \ a]
				= a b^{-1} c^{-1} \\
				& b^{-1} a^{-1} = a^{-1} b^{-1} b a b^{-1} a^{-1} =
				a^{-1} b^{-1} [b^{-1}, \ a^{-1}] = a^{-1} b^{-1} c.
			\end{align*}
		\end{proof}

		The following lemma allows us to define the Mal'cev normal form for the
		Heisenberg group.

		\begin{lem}
			For each \(g \in H(\mathbb{Z})\) there exists a unique word of the form
			\(a^i b^j c^k\) that represents \(g\), where \(i, \ j, \ k \in
			\mathbb{Z}\).
		\end{lem}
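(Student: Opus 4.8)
The plan is to establish existence and uniqueness separately, using the push rules of Lemma~\ref{malcev_gen_rules_lem} for existence and an action on a concrete model for uniqueness.

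For existence, I would argue that every element of \(H(\mathbb{Z})\) is represented by \emph{some} word in \(\{a^{\pm 1}, b^{\pm 1}, c^{\pm 1}\}^\ast\), and then show that any such word can be rewritten into the form \(a^i b^j c^k\). First, since \(c\) is central, every occurrence of \(c^{\pm 1}\) can be shuttled to the right-hand end of the word without changing the group element, collecting them into a single power \(c^k\). It remains to bring a word in \(a^{\pm 1}, b^{\pm 1}\) into the form \(a^i b^j c^{k'}\). For this I would use the four identities of Lemma~\ref{malcev_gen_rules_lem}, which say precisely that whenever a \(b^{\pm 1}\) appears immediately to the left of an \(a^{\pm 1}\), the two letters may be swapped at the cost of introducing a central \(c^{\pm 1}\). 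Each such swap strictly decreases the number of inversions (pairs of positions where a \(b^{\pm 1}\) precedes an \(a^{\pm 1}\)), so after finitely many swaps — again pushing any freshly created \(c^{\pm 1}\) to the right — the word has all \(a^{\pm 1}\)'s before all \(b^{\pm 1}\)'s; finally collect the \(a^{\pm 1}\)'s into \(a^i\) and the \(b^{\pm 1}\)'s into \(b^j\). This gives a representative \(a^i b^j c^k\).

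For uniqueness, the cleanest route is to exhibit a group \(N\) with underlying set \(\mathbb{Z}^3\) on which the triple \((i,j,k)\) can be read off. Define multiplication on \(\mathbb{Z}^3\) by \((i_1,j_1,k_1)(i_2,j_2,k_2) = (i_1+i_2,\ j_1+j_2,\ k_1+k_2 + j_1 i_2)\) (the choice of cross-term matching the relation \(ba = abc\)). One checks this is a group, and that \(a \mapsto (1,0,0)\), \(b \mapsto (0,1,0)\), \(c \mapsto (0,0,1)\) respects the defining relations of \(H(\mathbb{Z})\), hence extends to a homomorphism \(\phi \colon H(\mathbb{Z}) \to N\). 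A direct computation shows the word \(a^i b^j c^k\) maps to \((i,j,k)\). Therefore if \(a^{i_1} b^{j_1} c^{k_1}\) and \(a^{i_2} b^{j_2} c^{k_2}\) represent the same element of \(H(\mathbb{Z})\), applying \(\phi\) forces \((i_1,j_1,k_1) = (i_2,j_2,k_2)\), giving uniqueness.

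The main obstacle is the existence half: one must be careful that the rewriting procedure terminates. The inversion count argument handles this, but it requires noting that pushing a central \(c^{\pm 1}\) rightwards does not create new \(b\)-before-\(a\) inversions and that swapping a \(b^{\pm 1}a^{\pm 1}\) pair via Lemma~\ref{malcev_gen_rules_lem} decreases the count by exactly one; the verification that \(N\) is associative is routine. An alternative to the explicit model \(N\) is to quote that \(H(\mathbb{Z})\) is torsion-free nilpotent of Hirsch length \(3\) with central series \(1 \trianglelefteq \langle c\rangle \trianglelefteq \langle a, c\rangle \trianglelefteq H(\mathbb{Z})\) whose factors are infinite cyclic, so that counting shows the normal form is unique; but since the paper wants a self-contained proof, I would present the concrete model.
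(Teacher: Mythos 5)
Your proof is correct, and while the existence half follows the paper's own route (push central \(c^{\pm 1}\)'s to the right, use the swap rules of Lemma~\ref{malcev_gen_rules_lem}, with your inversion-count argument making the termination claim — which the paper only asserts — explicit), the uniqueness half is genuinely different. The paper multiplies the two alleged normal forms together, collects the result into \(a^{i_1-i_2}b^{j_1-j_2}c^{\ast}=1\), and then appeals to two structural facts: that \(a^i b^j\) lies in \(\langle c\rangle\) only when \(i=j=0\), and that no nontrivial power of \(c\) is trivial. These facts are stated rather than derived, so the paper's argument is shorter but leans on unproved properties of \(H(\mathbb{Z})\). Your construction of the model \(N=(\mathbb{Z}^3,\cdot)\) with \((i_1,j_1,k_1)(i_2,j_2,k_2)=(i_1+i_2,\ j_1+j_2,\ k_1+k_2+j_1 i_2)\) (in effect the unitriangular \(3\times 3\) integer matrices) and the homomorphism sending \(a^i b^j c^k\) to \((i,j,k)\) proves exactly those facts, so your version is more self-contained and simultaneously shows the normal form map is a bijection onto \(\mathbb{Z}^3\); the price is the routine verification of associativity and of the defining relations. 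One small caution: the paper is loose about its commutator convention, and with the literal convention \([x,y]=x^{-1}y^{-1}xy\) your generators satisfy \([A,B]=C^{-1}\) rather than \(C\); your cross-term is chosen to match \(ba=abc\) from Lemma~\ref{malcev_gen_rules_lem}, which is the form of the relation the paper actually uses, and in any case flipping the sign of the cross-term or of the image of \(c\) repairs the homomorphism without affecting the conclusion, so this is a presentational point rather than a gap.
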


		\begin{proof}
			Existence: Let \(w \in \{a, \ b, \ c, \ a^{-1}, \ b^{-1}, \
			c^{-1}\}^\ast\). To transform \(w\) into an equivalent word in the form
			\(a^i b^j c^k\), first note that \(c\) is central, so \(w\) is equal to
			\(u c^k\), where \(u \in \{a, \ b, \ a^{-1}, \ b^{-1}\}^\ast\), and \(k
			\in \mathbb{Z}\), which is obtained by pushing all \(c\)s and \(c^{-1}\)s
			in \(w\) to the right, then freely reducing. We can then look for any
			\(b\)s or \(b^{-1}\)s before \(a\)s or \(a^{-1}\)s, and use the rules of
			Lemma \ref{malcev_gen_rules_lem} to `swap' them, by adding a commutator.

			After doing these swaps, we can push the `new' \(c\)s and \(c^{-1}\)s to
			the back, to assume our word remains within \(\{a, \ b, \ a^{-1}, \
			b^{-1}\}^\ast (\{c\}^\ast \cup \{c^{-1}\}^\ast)\). By repeating this
			process, we will eventually have no more \(a\)s or \(a^{-1}\)s occurring
			after any \(b\) or \(b^{-1}\), and so will be in the form \(a^i b^j c^k\),
			where \(i, \ j, \ k \in \mathbb{Z}\).

			Uniqueness: Suppose \(i_1, \ i_2, \ j_1, \ j_2, \ k_1, \ k_2 \in
			\mathbb{Z}\) are such that \(a^{i_1} b^{j_1} c^{k_1} =_{H(\mathbb{Z})}
			a^{i_2} b^{j_2} c^{k_2}\). Then
			\begin{align*}
				1 & =_{H(\mathbb{Z})} a^{i_1} b^{j_1} c^{k_1} (a^{i_2} b^{j_2} c^{k_2})^{-1} \\
				& =_{H(\mathbb{Z})} a^{i_1} b^{j_1} c^{k_1} c^{-k_2} b^{-j_2} a^{-i_2} \\
				& =_{H(\mathbb{Z})} a^{i_1} b^{j_1 - j_2} a^{-i_2} c^{k_1 - k_2} \\
				& =_{H(\mathbb{Z})} a^{i_1} a^{-i_2} b^{j_1 - j_2} c^{-i_2(j_1 - j_2)}c^{k_1 - k_2} \\
				& =_{H(\mathbb{Z})} a^{i_1 - i_2} b^{j_1 - j_2} c^{-i_2(j_1 - j_2) + k_1 - k_2}.
			\end{align*}
			As \(1 \in \langle c \rangle\), we have that the above word lies in
			\(\langle c \rangle\). But since \(c\) commutes with \(a\) and \(b\),
			\(a^i b^j \in \langle c \rangle\) if and only if \(i = j = 0\). Thus \(i_1
			- i_2 = j_1 - j_2 = 0\). It follows that the above word equals \(c^{k_1 -
			k_2}\). Since this is a freely reduced word in \(\langle c \rangle\) as a
			power of \(c\), this represents the identity if and only if \(k_1 - k_2 =
			0\). Thus \(k_1 - k_2 = 0\), and the two words represent the same element
			of \(H(\mathbb{Z})\).
		\end{proof}

		\begin{dfn}
			The \textit{Mal'cev normal form} for the Heisenberg group is the normal
			form that maps an element \(g \in H(\mathbb{Z})\) to the unique word of
			the form \(a^i b^j c^k\), where \(i, \ j, \ k \in \mathbb{Z}\), that
			represents \(g\).
		\end{dfn}

\subsection{Space complexity}

	We give a short definition of space complexity. For a more detailed
	introduction, we refer the reader to \cite{computational_compl}.

	\begin{dfn}
		Let \(f \colon \mathbb{Z}_{\geq 0} \to \mathbb{Z}_{\geq 0}\). We say that an
		algorithm runs in \textit{non-deterministic \(f\)-space} (often written as
		\(\ns(f)\)) if it can be performed by a non-deterministic Turing machine
		with a read-only input tape, a write-only output tape, and a read-write work
		tape such that no computation path in the Turing machine uses	more than
		\(\mathcal{O}((n)f)\) units of the work tape, for an input of length \(n\).
	\end{dfn}

\subsection{Group equations}
	We start with the definition and some examples of equations in groups.

	\begin{dfn}
		Let \(G\) be a finitely generated group, \(V\) be a finite set and \(F(V)\)
		be the free group on \(V\). An \textit{equation} in \(G\) is an element \(w
		\in G \ast F(V)\) and denoted \(w = 1\). A \textit{solution} to \(w = 1\) is
		a homomorphism \(\phi \colon G \ast F(V) \to G\) that fixes elements of
		\(G\), such that \(w \phi = 1\). The elements of \(V\) are called the
		\textit{variables} of the equation. A \textit{system of equations} in \(G\)
		is a finite set of equations in \(G\), and a \textit{solution} to a system
		is a homomorphism that is a solution to every equation in the system.

		We say that two systems of equations in \(G\) are \textit{equivalent} if
		their sets of solutions are equal.

		Given a choice of generating set \(\Sigma\) for \(G\), we say the
		\textit{length} of \(w = 1\) is the length of the element \(w\) in
		\(G \ast F(V)\), with respect to the generating set \(\Sigma \cup V\).
		This will be our input size for any algorithm that takes a group equation
		as an input.
	\end{dfn}

	\begin{rmk}
		We will often abuse notation, and consider a solution to an equation in a
		group \(G\) to be a tuple of elements \((g_1, \ \ldots, \ g_n)\), rather
		than a homomorphism from \(G \ast F(X_1, \ \ldots, \ X_n) \to G\), where
		\(X_1, \ \ldots, \ X_n\) are variables. We can recover such a homomorphism
		\(\phi\) from a tuple by setting \(g \phi = g\) if \(g \in G\) and \(X_i
		\phi = g_i\). The action of \(\phi\) on the remaining elements is now
		determined as it is a homomorphism.
	\end{rmk}

	\begin{ex}
		Equations in the group \(\mathbb{Z}\) are linear equations in integers,
		and thus elementary linear algebra is sufficient to show that their
		satisfiability is decidable. A similar argument works for any finitely
		generated abelian group.

		For example, if we use \(a\) as the free generator for \(\mathbb{Z}\), then
		\(X^2 a^2 X^{-3} a Y^2 = 1\) is an equation in the group \(\mathbb{Z}\). We
		can rewrite this using additive notation to get
		\[
			2X + 2 - 3X + 1 + 2Y = 0.
		\]
		Using the fact that \(\mathbb{Z}\) is commutative, the above equation is
		equivalent to \(-X + 2Y + 3 = 0\). Thus the set of solutions can be written
		as
		\[
			\{(2y + 3, \ y) \mid y \in \mathbb{Z}\}.
		\]
	\end{ex}

\subsection{Equations in the ring of integers}
	We briefly define an equation in integers.

	\begin{dfn}
		An \textit{equation} in the ring of integers is an identity \((X_1, \
		\ldots, \ X_n)f = 0\), where \((X_1, \ \ldots, \ X_n)f \in \mathbb{Z}[X_1,
		\ \ldots, \ X_n]\) is a polynomial. The indeterminates \(X_1, \ \ldots, \
		X_n\) are called \textit{variables}. An equation is called
		\textit{quadratic} if the degree of \((X_1, \ \ldots, \ X_n)f\) is at most
		\(2\).

		A \textit{solution} to an equation \((X_1, \ \ldots, \ X_n)f = 0\) is a
		ring homomorphism \(\phi \colon \mathbb{Z}[X_1, \ \ldots, \ X_n] \to
		\mathbb{Z}\) that fixes \(\mathbb{Z}\) pointwise, and such that \(((X_1, \
		\ldots, \ X_n)f) \phi = 0\).

		A \textit{system} of equations in integers is a finite set of equations.
		A \textit{solution} to the system is any ring homomorphism that is a
		solution to every equation in the system.
	\end{dfn}

	When we create algorithms that take equations in integers as input, we will
	explicitly state the size of the input.

	\begin{rmk}
		As with group equations, we will usually use a tuple \((x_1, \ \ldots, \
		x_n)\) rather than a ring homomorphism \(\phi \colon \mathbb{Z}[X_1, \
		\ldots, \ X_n] \to \mathbb{Z}\). The homomorphism \(\phi\) can be obtained
		from the tuple by defining \(X_1 \phi = x_i\) for all \(i\), and \(n \phi
		= n\) for all \(n \in \mathbb{Z}\). Since \(\phi\) is a ring homomorphism,
		the action of \(\phi\) on the remainder of \(\mathbb{Z}[X_1, \ \ldots, \
		X_n]\) is now determined.
	\end{rmk}

\subsection{Solution languages}
	Since solutions to group equations are homomorphisms (or tuples of group
	elements), in order to express our sets of solutions as languages we need a
	method of writing our solutions as words. We start by defining a normal
	form.

	\begin{dfn}
		Let \(G\) be a group with a finite generating set \(\Sigma\). A
		\textit{normal form} for \(G\) with respect to \(\Sigma\) is a function
		\(\eta \colon G \to (\Sigma^\pm)^\ast\) that fixes \(\Sigma^\pm\), such that
		\(g \eta\) is a word representing \(g\) for all \(g \in G\).
	\end{dfn}

	Note that as our definition of a normal form uses functions, our normal form
	associates a unique word representative for each group element.

	We now define the solution language to a group equation, with respect to a
	specified normal form.

	\begin{dfn}
		Let \(G\) be a group with a finite generating set \(\Sigma\) and a normal
		form \(\eta\) with respect to \(\Sigma\). Let \(\mathcal E\) be a system
		of equations in \(G\) with variables \(X_1, \ \ldots, \ X_n\). The
		\textit{solution language} of \(\mathcal E\), with respect to \(\Sigma\)
		and \(\eta\), is the language
		\[
			\{(X_1) \phi \eta \# \cdots \# (X_n) \phi \eta \mid \phi \text{ is a
			solution to } \mathcal E\}
		\]
		over the alphabet \(\Sigma^\pm \cup \{\#\}\), where \(\#\) is a symbol not
		in \(\Sigma^\pm\).
	\end{dfn}

	Note that the solution language to an equation in a single variable will not
	require the use of the letter \(\#\), as it is used to separate words
	representing the solutions to individual variables.

	We now define an analogous notion for systems of equations in the ring of
	integers. We pick a letter as a generator, and write the non-negative
	integer \(n\) as this letter to the power of \(n\). For negative integers,
	we introduce an `inverse' of this letter, and express each \(n < 0\) as the
	inverse letter to the power of \(|n|\).

	\begin{dfn}
		Define \(\mu \colon \mathbb{Z} \to \{a\}^\ast \cup \{a^{-1}\}^\ast\) by \(n
		\mu = a^n\).

		Let \(\mathcal E\) be a system of equations in the ring of integers, with
		variables \(X_1, \ \ldots, \ X_n\). The \textit{solution language} to
		\(\mathcal E\) is the language
		\[
			\{(X_1) \phi \mu \# \cdots (X_n) \phi \mu \mid \phi \text{ is a solution
			to } \mathcal E\}
		\]
		over \(\{a, \ a^{-1}, \ \#\}\).
	\end{dfn}

\subsection{EDT0L languages}
	We now define EDT0L languages, which are the class of languages we will use to
	represent solutions. For a more detailed description of EDT0L languages,
	and where they fit in within the collection of languages called L-systems, we
	refer the reader to \cite{math_theory_L_systems}.

	\begin{dfn}
		An \textit{EDT0L system} is a tuple \(\mathcal H = (\Sigma, \ C, \ \omega,
		\ \mathcal{R})\), where
		\begin{enumerate}
			\item \(\Sigma\) is an alphabet, called the \textit{(terminal)
			alphabet};
			\item \(C\) is a finite superset of \(\Sigma\), called the
			\textit{extended alphabet} of \(\mathcal H\);
			\item \(\omega \in C^\ast\) is called the \textit{start word};
			\item \(\mathcal{R}\) is a regular (as a language) set of endomorphisms
			of \(C^\ast\), called the \textit{rational control} of \(\mathcal H\).
		\end{enumerate}
		The language \textit{accepted} by \(\mathcal H\) is \(L(\mathcal H) =
		\{\omega \phi \mid \phi \in \mathcal{R}\} \cap \Sigma^\ast\).

		A language that is accepted by some EDT0L system is called an
		\textit{EDT0L language}.
	\end{dfn}

	We continue with an example of an EDT0L language.

	\begin{ex}
		The language \(L = \{a^{2^n + 1} \mid n \in \mathbb{Z}_{\geq 0}\}\) is
		EDT0L over the alphabet \(\{a\}\). To see this, consider the EDT0L system
		\((\{a\}, \ \{a, \ c\}, \ ac, \ \mathcal R)\) where \(\mathcal R\) is
		defined by the finite-state automaton in Figure \ref{2n_1_EDT0L_fig},
		where \(\phi, \ \theta \in \End(\{a, \ c\}^\ast)\) satisfy
		\begin{align*}
			a \phi & = a^2 & a \theta = a & \\
			c \phi & = c & c \theta = a. &
		\end{align*}
		Alternatively, \(\mathcal R\) can be defined by the rational expression
		\(\phi^\ast \theta\).
		\begin{figure}
			\caption{Rational control for \(L = \{a^{2^n + 1} \mid n \in
			\mathbb{Z}_{\geq 0}\}\), with start state \(q_0\) and accept state
			\(q_1\).}
			\label{2n_1_EDT0L_fig}
			\begin{tikzpicture}
				[scale=.8, auto=left,every node/.style={circle}]
				\tikzset{
				on each segment/.style={
					decorate,
					decoration={
						show path construction,
						moveto code={},
						lineto code={
							\path [#1]
							(\tikzinputsegmentfirst) -- (\tikzinputsegmentlast);
						},
						curveto code={
							\path [#1] (\tikzinputsegmentfirst)
							.. controls
							(\tikzinputsegmentsupporta) and (\tikzinputsegmentsupportb)
							..
							(\tikzinputsegmentlast);
						},
						closepath code={
							\path [#1]
							(\tikzinputsegmentfirst) -- (\tikzinputsegmentlast);
						},
					},
				},
				mid arrow/.style={postaction={decorate,decoration={
							markings,
							mark=at position .5 with {\arrow[#1]{stealth}}
						}}},
			}

				\node[draw] (q0) at (0, 0) {\(q_0\)};
				\node[draw, double] (q1) at (5, 0)  {\(q_1\)};

				\draw[postaction={on each segment={mid arrow}}] (q0) to (q1);

				\draw[postaction={on each segment={mid arrow}}] (q0) to
				[out=140, in=-140, distance=2cm] (q0);

				\node (l1) at (-2.8, 0) {\(\phi \colon a \mapsto a^2\)};

				\node (l2) at (2.5, 0.5) {\(\theta \colon c \mapsto a\)};

			\end{tikzpicture}
		\end{figure}
	\end{ex}

	The class of EDT0L languages is stable under five of the six standard
	operations on languages. The fact that there are non-EDT0L languages which
	are pre-images of EDT0L languages under free monoid homomorphisms means that
	EDT0L languages are not a full algebraic family of languages, like regular,
	context-free or ET0L languages. When dealing with equations, or other
	`parallel' languages, these five operations often prove to be sufficient.

	\begin{lem}[\cite{EDT0L_extensions}, Lemma 2.15]
		\label{EDT0L_closure_properties_lem}
		The class of EDT0L languages is closed under the following operations:
		\begin{enumerate}
			\item Finite unions;
			\item Intersection with regular languages;
			\item Concatenation;
			\item Kleene star closure;
			\item Images under free monoid homomorphisms.
		\end{enumerate}
		Moreover, if the EDT0L systems used in any of these operations can be
		constructed in \(\mathsf{NSPACE}(f)\), for some \(f \colon \mathbb{Z}_{\geq
		0} \to \mathbb{Z}_{\geq 0}\), then there is a computable EDT0L system
		accepting the resultant language that can also be constructed in
		\(\mathsf{NSPACE}(f)\). For images under homomorphisms (5), this requires
		the homomorphism to be able to be written down in \(\ns(f)\).
	\end{lem}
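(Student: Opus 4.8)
The plan is to treat each of the five operations separately, in each case building an EDT0L system for the output from EDT0L systems for the inputs, and then checking that every construction is effective within the claimed space bound. Three of the operations are routine gluing arguments. For a finite union of \(\mathcal H_i = (\Sigma, C_i, \omega_i, \mathcal R_i)\), relabel so that \(C_1 \cap C_2 = \Sigma\), adjoin a fresh symbol \(s\), take \(s\) as the start word, and use the rational control \(\{s \mapsto \omega_1\}\,\mathcal R_1' \cup \{s \mapsto \omega_2\}\,\mathcal R_2'\), where \(\mathcal R_i'\) extends \(\mathcal R_i\) by the identity on the new letters; since the \(i\)th branch never leaves \(C_i^\ast\), intersecting with \(\Sigma^\ast\) gives \(L(\mathcal H_1) \cup L(\mathcal H_2)\). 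For concatenation, relabel so the extended alphabets are disjoint, with disjoint copies \(\Sigma_1, \Sigma_2\) of \(\Sigma\), take start word \(\omega_1 \omega_2\), and use rational control \(\mathcal R_1'\,\mathcal R_2'\,\rho\), where \(\mathcal R_i'\) acts as \(\mathcal R_i\) on \(C_i\) and trivially elsewhere and \(\rho\) relabels \(\Sigma_1, \Sigma_2\) back to \(\Sigma\); the two halves of the word evolve independently, so the language is \(L(\mathcal H_1) L(\mathcal H_2)\). For the image under a monoid homomorphism \(h \colon \Sigma^\ast \to \Delta^\ast\), adjoin \(\Delta\) to the extended alphabet (disjoint from \(C\)), keep the start word, set the terminal alphabet to \(\Delta\), and append to the rational control the single endomorphism sending \(\sigma \mapsto \sigma h\) for \(\sigma \in \Sigma\) and fixing everything else; a word survives the final intersection with \(\Delta^\ast\) exactly when \(\mathcal H\) produced a terminal word, so the resulting language is \(h(L(\mathcal H))\).

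Kleene star needs a genuine idea, since simply spawning many copies of the start word fails: a subsequent endomorphism rewrites all the copies in lockstep. Instead I would process the copies one at a time, using a fresh copy \(\tilde C\) of \(C\), a control symbol \(s\), and a dead symbol \(\dagger\), with new terminal alphabet \(\Sigma\) and start word \(s\). Introduce a ``respawn'' endomorphism \(\beta\) that sends each \(\tilde\sigma\) in the copy \(\tilde\Sigma\) to \(\sigma \in \Sigma\), sends every other letter of \(\tilde C\) to \(\dagger\), fixes \(\Sigma\) and \(\dagger\), and sends \(s \mapsto \tilde\omega s\); a ``finish'' endomorphism \(\gamma\) identical to \(\beta\) except that \(s \mapsto \varepsilon\); and, for each endomorphism of \(\mathcal R\), its copy acting on \(\tilde C\) and fixing \(\Sigma \cup \{s, \dagger\}\), giving a regular set \(\tilde{\mathcal R}\). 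With rational control \((\beta\,\tilde{\mathcal R})^\ast \gamma\): each \(\beta\) freezes the finished copy sitting in \(\tilde\Sigma\) into \(\Sigma\) (or poisons it with \(\dagger\) if it was unfinished) and starts a new copy \(\tilde\omega\) from \(s\); \(\tilde{\mathcal R}\) runs that copy; and \(\gamma\) freezes the last copy and deletes \(s\). Intersecting with \(\Sigma^\ast\) yields exactly \(\bigcup_{n \geq 0} L(\mathcal H)^n = L(\mathcal H)^\ast\).

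The hard case, and the one I expect to take the most work, is intersection with a regular language \(R\): EDT0L derivations are parallel, not left-to-right, so they do not obviously interact with a DFA scanning \(R\). Fix a DFA \((Q, \Sigma, \cdot, q_0, F)\) for \(R\) and decorate the extended alphabet by pairs of states, \(C' = (C \times Q \times Q) \cup \Sigma \cup \{s, \dagger\}\), where \((c, p, q)\) is intended to mean that the eventual terminal expansion \(u\) of \(c\) satisfies \(p \cdot u = q\). Replace the start word \(\omega = \omega_1 \cdots \omega_m\) by its finitely many consistent decorations \((\omega_1, q_0, p_1)(\omega_2, p_1, p_2) \cdots (\omega_m, p_{m-1}, p_m)\) with \(p_m \in F\) (introduced from the fresh start symbol \(s\) by finitely many endomorphisms), and replace each endomorphism \(\phi\) by the finitely many ``state-threading'' endomorphisms that send \((c, p, q)\) to a decoration \((d_1, p, r_1)(d_2, r_1, r_2)\cdots(d_k, r_{k-1}, q)\) of \(c\phi = d_1 \cdots d_k\) (and to \(\dagger\) when \(c\phi = \varepsilon\) but \(p \neq q\)). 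Finally append an endomorphism that erases the decoration of each \((\sigma, p, q)\) with \(\sigma \in \Sigma\) and \(p \cdot \sigma = q\), and poisons every other letter with \(\dagger\). The new rational control is the regular language obtained from \(\mathcal R\) by replacing each \(\phi\) with the regular set of its state-threading versions, preceded by the decorated-start endomorphisms and followed by the erasing endomorphism. An induction on the length of a control word shows that a decorated start word is carried to a decoration of \(\omega\phi\); the final intersection with \(\Sigma^\ast\) then retains exactly those \(\omega\phi\) that lie in \(\Sigma^\ast\) and carry the genuine DFA run, that is, those in \(L(\mathcal H) \cap R\).

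For the complexity claim, every construction above is effective, and for finite unions, concatenation, Kleene star and homomorphic images it adds only a bounded amount of new data to the given systems, so it stays within \(\ns(f)\); the homomorphic-image step uses nothing beyond the ability to write down \(\sigma \mapsto \sigma h\), which is the stated hypothesis that \(h\) is computable in \(\ns(f)\). For intersection with a regular language the decoration enlarges the alphabet by a factor \(|Q|^2\) and each rule by a factor at most the maximal rule length, a polynomial in the input size; although this may produce many decorated endomorphisms, they can be streamed one at a time onto the write-only output tape, so the space used remains polynomial and hence is absorbed into \(\ns(f)\).
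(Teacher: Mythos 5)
The paper never proves this lemma internally --- it is quoted verbatim from \cite{eqns_hyp_grps}, Proposition 3.8 --- so there is no in-paper argument to compare against; judged on its own terms, your five constructions are the standard ones and the language-theoretic part of your proposal is correct. Union, concatenation and homomorphic image are handled soundly (for the homomorphic image you should note that the target alphabet can be made disjoint from the old extended alphabet by passing through a disjoint copy and renaming at the end, but this is cosmetic). Your Kleene-star construction, which serialises the copies via the control letter \(s\), a frozen terminal zone and a poison letter \(\dagger\), is correct: each factor of the output is produced by a complete control word of \(\mathcal R\) before the next \(\beta\) freezes it, so the accepted language is exactly \(L(\mathcal H)^\ast\), including \(\varepsilon\). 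The state-threading construction for intersection with a regular language is also the right idea --- it is the same letter-by-letter substitution into the rational control that the paper itself performs in \cref{EDT0L_division_lem} --- and both inclusions go through by the usual decoration-of-the-derivation argument you sketch.

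The genuine gap is in the ``moreover'' clause, specifically for item (2). Your justification --- the blow-up is ``a polynomial in the input size \(\ldots\) so the space used remains polynomial and hence is absorbed into \(\ns(f)\)'' --- is not valid for an arbitrary \(f \colon \mathbb{Z}_{\geq 0} \to \mathbb{Z}_{\geq 0}\): in this paper \(f\) is frequently logarithmic or log-linear (see \cref{gen_fibonacci_EDT0L_lem} and the proposition on the general Pell equation), so ``polynomial'' is not absorbed into \(\ns(f)\). Nor is the maximal rule length of the given system bounded by a polynomial in the input in general; the paper explicitly works with \(\exp(f)\)-bounded systems, whose rules can have length exponential in \(f\). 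Enumerating the state-threaded versions of a single endomorphism \(\phi\) means cycling through all assignments of states to the positions of all images \(c\phi\), which requires a work-tape counter of size roughly the total rule length times \(\log|Q|\), potentially far exceeding \(f\); streaming the decorated endomorphisms onto the write-only output tape does not avoid this, because the current assignment must be stored in order to produce the next one. As written, the \(\ns(f)\) bound for intersection with regular languages is therefore unproven: you would need either an explicit hypothesis bounding rule lengths relative to \(f\) (which is in effect what the cited source arranges), or a more careful generation scheme, say regenerating each rule by re-running the \(\ns(f)\) constructor and guessing the interpolating states a position at a time while storing only \(O(\log|Q|)\) bits, together with an account of how the transitions of the new control automaton are indexed. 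The corresponding space claims for union, concatenation, Kleene star and homomorphic image are fine, since there the only new data (finitely many fresh letters and the endomorphisms \(\beta\), \(\gamma\), \(\rho\), \(\sigma \mapsto \sigma h\)) can be written down by re-running the given constructors.
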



\section{`Dividing' EDT0L languages by a constant}
	\label{division_EDT0L_sec}

    The purpose of this section is to show that given an EDT0L language where
    all words are of the form \(a^i \# b^j\), `dividing' the number \(i\) of
    \(a\)s and the number \(j\) of \(b\)s in a given word by constant values
    \(\gamma\) and \(\delta\) respectively, and removing all words \(a^i \#
    b^j\) where \(i\) is not divisible by \(\gamma\) and \(j\) is not divisible
    by \(\delta\) yields an EDT0L language. We proceed in a similar fashion to
    the arguments used in \cite{EDT0L_extensions}, Section 3, using
    \(\#\)-separated EDT0L systems.

		The concept of \(\#\)-separated EDT0L systems was used in
		\cite{EDT0L_extensions} to show that solution languages to systems of
		equations in direct products of groups where systems of equations have EDT0L
		solution languages are also EDT0L. We use a slightly different definition
		here: we only need a single \(\#\) rather than arbitrarily many, so our
		definition is less general, and we also insist that the start word is of a
		specified form. The latter assumption does not affect the expressive power
		of these systems; preconcatenating the rational control with an appropriate
		endomorphism can convert a \(\#\)-separated system with an arbitrary start
		word into one with a start word of the form we use.

		\begin{dfn}
      Let \(\Sigma\) be an alphabet, and \(\# \in \Sigma\). A
      \textit{\(\#\)-separated EDT0L system} is an EDT0L system \(\mathcal{H}\)
      with: an extended alphabet \(C\), a terminal alphabet \(\Sigma\) and a
      start word \(\omega\) of the form \(\omega = \perp_1 \# \perp_2\) where:
      \(\perp_1, \ \perp_2 \in C \backslash \{\#\}\), and \(c \phi = \#\) if
      and only if \(c = \#\), for every \(c \in C\) and \(\phi\) in the finite
      set of endomorphisms over which the rational control is a regular
      language.
		\end{dfn}

		For space complexity purposes, we will need bounds on the size of extended
		alphabets, and the size of images of letters under endomorphisms in the
		rational control in many of the EDT0L systems we use. We define the term
		\(g\)-bounded to capture this.

		\begin{dfn}
			Let \(\mathcal H = (\Sigma, \ C, \ \perp_1 \# \perp_2, \ \mathcal R)\) be
			a \(\#\)-separated EDT0L system, and let \(g \colon \mathbb{Z}_{\geq 0}
			\to \mathbb{Z}_{\geq 0}\) be a function in terms of a given input size
			\(I\). Let \(B\) be the finite set of endomorphisms of \(C^\ast\) over
			which \(\mathcal R\) is regular. We say that \(\mathcal H\) is
			\textit{\(g\)-bounded} if
			\begin{enumerate}
				\item \(|C| \leq (I)g\);
				\item \(\max\{|c \phi| \mid c \in C, \ \phi \in B\} \leq (I)g\).
			\end{enumerate}
		\end{dfn}

		We will need the fact that the class of languages accepted by
		\(\#\)-separated EDT0L systems is closed under finite unions, with space
		complexity properties being preserved when taking these unions.

		\begin{lem}
			\label{hash_sep_union_lem}
			Let \(L\) and \(M\) be languages over an alphabet \(\Sigma\), accepted by
			\(\#\)-separated EDT0L systems \(\mathcal H\) and \(\mathcal G\), that are
			both \(g\)-bounded and constructible in \(\ns(f)\), for some \(f, \ g
			\colon \mathbb{Z}_{\geq 0} \to \mathbb{Z}_{\geq 0}\).	Then
			\begin{enumerate}
				\item There is a \(\#\)-separated EDT0L system \(\mathcal F\) for \(L
				\cup M\);
				\item The system \(\mathcal F\) is constructible in \(\ns(f)\);
				\item The system \(\mathcal F\) is \((2g + 2)\)-bounded.
			\end{enumerate}
		\end{lem}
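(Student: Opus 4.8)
The plan is to construct $\mathcal F$ directly from the given systems $\mathcal H$ and $\mathcal G$ by taking a disjoint union of their extended alphabets and then reconciling their start words. Write $\mathcal H = (\Sigma, C_{\mathcal H}, \perp_1^{\mathcal H} \# \perp_2^{\mathcal H}, \mathcal R_{\mathcal H})$ and similarly for $\mathcal G$, with underlying finite endomorphism sets $B_{\mathcal H}$ and $B_{\mathcal G}$. First I would relabel the non-$\#$ letters of $C_{\mathcal G}$ so that $C_{\mathcal H} \cap C_{\mathcal G} = \{\#\}$, and set $C = C_{\mathcal H} \cup C_{\mathcal G} \cup \{\perp_1, \perp_2\}$ where $\perp_1, \perp_2$ are two fresh symbols. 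The start word of $\mathcal F$ will be $\perp_1 \# \perp_2$. The only subtlety is that $\mathcal F$'s rational control must, as its first move, branch nondeterministically: one branch sends $\perp_1 \mapsto \perp_1^{\mathcal H}$, $\perp_2 \mapsto \perp_2^{\mathcal H}$ and fixes everything else, then proceeds through (a copy of) $\mathcal R_{\mathcal H}$; the other branch sends $\perp_1 \mapsto \perp_1^{\mathcal G}$, $\perp_2 \mapsto \perp_2^{\mathcal G}$ and then runs through $\mathcal R_{\mathcal G}$. Concretely, let $\iota_{\mathcal H}, \iota_{\mathcal G} \in \End(C^\ast)$ be these two initialising endomorphisms, extend each $\phi \in B_{\mathcal H}$ to $C^\ast$ by fixing the letters of $C_{\mathcal G} \setminus \{\#\}$ and $\perp_1, \perp_2$ (and symmetrically for $B_{\mathcal G}$), and take $\mathcal R_{\mathcal F} = \iota_{\mathcal H}\,\mathcal R_{\mathcal H} \cup \iota_{\mathcal G}\,\mathcal R_{\mathcal G}$, which is regular over $B_{\mathcal H} \cup B_{\mathcal G} \cup \{\iota_{\mathcal H}, \iota_{\mathcal G}\}$ since this is a union of two regular sets after prepending a single letter.

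Next I would verify the three claimed properties. For (1), correctness: an accepted word of $\mathcal F$ arises from applying $\iota_{\mathcal H}$ then some $\psi \in \mathcal R_{\mathcal H}$ (or the $\mathcal G$-analogue) to $\perp_1 \# \perp_2$; since the extended alphabets meet only in $\#$ and the extensions fix the foreign letters, applying $\iota_{\mathcal H}$ gives exactly $\perp_1^{\mathcal H} \# \perp_2^{\mathcal H}$, and then running $\mathcal R_{\mathcal H}$ reproduces precisely $L(\mathcal H)$ after intersecting with $\Sigma^\ast$ — the foreign letters never appear, so the intersection with $\Sigma^\ast$ is unaffected. Hence $L(\mathcal F) = L \cup M$. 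To see $\mathcal F$ is $\#$-separated: the start word is $\perp_1 \# \perp_2$ with $\perp_1, \perp_2 \in C \setminus \{\#\}$ by construction; and every endomorphism in the defining set satisfies $c \mapsto \#$ iff $c = \#$ — this holds for $\iota_{\mathcal H}, \iota_{\mathcal G}$ since they only move $\perp_1, \perp_2$ to non-$\#$ letters, and for the extended $\phi$'s because $\mathcal H, \mathcal G$ were already $\#$-separated and the extension fixes the new letters, none of which is or maps to $\#$.

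For (2), the space complexity: writing down $C$ requires relabelling $C_{\mathcal G}$ and adding two symbols, which is doable within the resources already used to write down $C_{\mathcal H}$ and $C_{\mathcal G}$ (the relabelling is a fixed injective renaming); the automaton for $\mathcal R_{\mathcal F}$ is obtained from those for $\mathcal R_{\mathcal H}, \mathcal R_{\mathcal G}$ by adding a new start state with two outgoing edges labelled $\iota_{\mathcal H}, \iota_{\mathcal G}$ to the old start states; and extending each endomorphism by the identity on finitely many new letters costs nothing beyond recording their images. Since $\mathcal H$ and $\mathcal G$ are each constructible in $\ns(f)$, so is $\mathcal F$. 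For (3), the bound: $|C| \leq |C_{\mathcal H}| + |C_{\mathcal G}| - 1 + 2 \leq (I)g + (I)g + 1 = 2(I)g + 1 \leq (I)(2g+2)$; and for the image bound, $\iota_{\mathcal H}$ and $\iota_{\mathcal G}$ send each letter to a word of length $\leq 1$, while the extended $\phi$'s have $|c\phi| \leq (I)g$ for $c$ in the original alphabet and $|c\phi| = 1$ for new letters, so the maximum is $\leq \max\{(I)g, 1\} \leq (I)(2g+2)$. I do not expect any genuine obstacle here — the only thing to be careful about is the bookkeeping that the two subsystems share no extended letters except $\#$, so that running one subsystem's rational control is literally the same computation inside $\mathcal F$; this disjointness is what makes the union of rational controls behave correctly and keeps the $\#$-separated property intact.
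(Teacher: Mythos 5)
Your construction is essentially the paper's: the paper also introduces two fresh start letters (its $\perp$ and $\$$), two initialising endomorphisms $\theta_1,\theta_2$ sending them to the respective original start letters, takes the rational control $\theta_1\mathcal R \cup \theta_2\mathcal S$, and verifies the same bounds $|E| \le 2g+2$ and image length $\le 2g+2$, with the same constant-space argument for the new data. So the approach, the bookkeeping and the complexity analysis all match.

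One step, as literally written, is wrong and should be repaired: you relabel \emph{all} non-$\#$ letters of $C_{\mathcal G}$ so that $C_{\mathcal H} \cap C_{\mathcal G} = \{\#\}$. Since both extended alphabets contain the terminal alphabet $\Sigma$ (and $\# \in \Sigma$), this forces you to rename the terminal letters of $\mathcal G$ as well; the $\mathcal G$-branch of $\mathcal F$ then produces words over the renamed copies, so intersecting with $\Sigma^\ast$ no longer yields $M$, and your claim $L(\mathcal F) = L \cup M$ fails for that branch (unless you add a final homomorphism mapping the copies back to $\Sigma$, which you do not). The fix is what the paper does: only make $C_{\mathcal H}\setminus\Sigma$ and $C_{\mathcal G}\setminus\Sigma$ disjoint, keeping $\Sigma$ common to both. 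Nothing else in your argument needs the stronger disjointness: since each word of the rational control $\iota_{\mathcal H}\mathcal R_{\mathcal H} \cup \iota_{\mathcal G}\mathcal R_{\mathcal G}$ is composed of endomorphisms from a single branch, the extensions by the identity on the other branch's non-terminal letters already guarantee that one subsystem's derivation is unaffected by the other, and the $\#$-separation and $(2g+2)$-boundedness checks go through unchanged.
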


		\begin{proof}
			Let \(\mathcal H = (\Sigma, \ C, \ \perp_1 \# \$_1, \ \mathcal R)\) and
			\(\mathcal G = (\Sigma, \ D, \ \perp_2 \# \$_2, \ \mathcal S)\). Let
			\(B_1\) and \(B_2\) be the finite sets of endomorphisms over which
			\(\mathcal R\) and \(\mathcal S\) are regular. We can assume without loss
			of generality that endomorphisms in \(B_1 \cup B_2\) fix elements of
			\(\Sigma\), and also that \(C \backslash \Sigma\) and \(D \backslash
			\Sigma\) are disjoint.

			Let \(\perp\) and \(\$\) be symbols not already used, and let \(E = C \cup
			D \cup \{\perp, \ \$\}\). For each \(\phi \in B_1\), define \(\bar{\phi}\)
			to be the extension of \(\phi\) to \(E\) by \(d \bar{\phi} = d\) for all
			\(d \in E \backslash C\). Similarly extend each \(\phi \in B_2\) to
			\(\bar{\phi} \in \End(E^\ast)\) by \(c \bar{\phi} = c\) for all \(c \in E
			\backslash D\). Define \(\theta_1, \ \theta_2 \in \End(E^\ast)\) by
			\[
				c \theta_1 = \left\{
				\begin{array}{cl}
					\perp_1 & c = \perp \\
					\$_1 & c = \$ \\
					c & \text{otherwise},
				\end{array}
				\right. \qquad
				c \theta_2 = \left\{
				\begin{array}{cl}
					\perp_2 & c = \perp \\
					\$_2 & c = \$ \\
					c & \text{otherwise}.
				\end{array}
				\right. \qquad
			\]
			By construction, \(L \cup M\) is accepted by the \(\#\)-separated
			EDT0L system \(\mathcal F = (\Sigma, \ E, \ \perp \# \$, \ \theta_1
			\mathcal R \cup \theta_2 \mathcal S)\).

			Note that \(\theta_1\) and \(\theta_2\) can both be constructed in
			constant space, and thus the rational control of \(\mathcal F\) is
			constructible in \(\ns(f)\). The start word is constructible in constant
			space. As a union of \(C\) and \(D\) with a constant number of additional
			symbols, \(E\) can be constructed using the same information required to
			construct \(C\) and \(D\), and is thus constructible in \(\ns(f)\).

			We have that \(|E| = |C| + |D| + 2\), and so is bounded by \(2g + 2\). In
			addition, \(B = B_1 \cup B_2 \cup \{\theta_1, \ \theta_2\}\), and so
			\(\max\{|c \phi| \mid c \in E, \ \phi \in B\} = \max(g, \ 2) \leq 2g +
			2\).
		\end{proof}

		We can now prove the central result of this section, about `division' of
		certain EDT0L languages by a constant. To show the space complexity
		properties, we need the EDT0L system we start with to be exponentially
		bounded by the space complexity in which is can be constructed. We will need
		the following notation:

		\begin{ntn}
			Let \(\Sigma\) be an alphabet, \(a \in \Sigma\), and \(w \in
			\Sigma^\ast\). Define \(\#_a(w)\) to be the number of occurrences of the
			letter \(a\) within \(w\).
		\end{ntn}

		The idea of the proof of the following lemma is to index every letter in the
		start word with \(k(|\gamma| - 1)\) \(\cent\)s and \(k\) \(\$\)s, for some
		\(k \in \mathbb{Z}_{\geq 0}\), and ensure this fact is preserved under the
		action of the rational control (possibly changing the value of \(k\)). We
		then add a new endomorphism to map \(\cent\)-indexed letters to
		\(\varepsilon\), and \(\$\)-indexed letters to \(a\) (or \(a^{-1}\) if
		\(\gamma < 0\)).

		\begin{lem}
			\label{EDT0L_division_lem}
			Let \(X \subseteq \mathbb{Z}_{\geq 0}^2\) be such that for each \(x, \ y
			\in \mathbb{Z}_{\geq 0}\) there is at most one \(x' \in \mathbb{Z}_{\geq
			0}\) such that \((x, \ x') \in X\), and at most one \(y' \in
			\mathbb{Z}_{\geq 0}\) with \((y', \ y) \in X\). Let \(\gamma, \ \zeta \in
			\mathbb{Z}\) be non-zero. Let
			\[
				L = \{a^x \# b^y \mid (x, \ y) \in X\}.
			\]
			\begin{enumerate}
				\item If \(L\) is EDT0L, then so is the language
				\[
					L_{\gamma, \zeta} = \{a^{\frac{x}{\gamma}} \# b^{\frac{y}{\zeta}} \mid
					(x, \ y) \in X, \ \gamma | x, \ \zeta | y\};
				\]
				\item If \(L\) is accepted by a \(\#\)-separated EDT0L system \(\mathcal
				H = (\Sigma, \ C, \ \perp_1 \# \perp_2, \ \mathcal R)\) that is
				\(\exp(f)\)-bounded and constructible in \(\ns(f)\), where \(f \colon
				\mathbb{Z}_{\geq 0} \to \mathbb{Z}_{\geq 0}\) is at least linear, then
				\(L_{\gamma, \zeta}\) is accepted by an EDT0L system that is
				constructible in \(\ns(f gh)\), where \(g\) is linear in \(|\gamma|\),
				and \(h\) is linear in \(|\zeta|\).
			\end{enumerate}
		\end{lem}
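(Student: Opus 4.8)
My plan is to stay inside the EDT0L world and combine the closure properties of \cref{EDT0L_closure_properties_lem} with the \(\#\)-separated machinery, since division by a constant is the only operation here that is not one of the five standard ones.

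\textbf{Reductions.} First I would reduce to the case \(\gamma,\zeta>0\): every surviving word has \(\gamma\mid x\) and \(\zeta\mid y\), so \(L_{\gamma,\zeta}\) is obtained from \(L_{|\gamma|,|\zeta|}\) by the free monoid homomorphism sending \(a\mapsto a^{-1}\) when \(\gamma<0\) and \(b\mapsto b^{-1}\) when \(\zeta<0\) (writable in constant space), and \cref{EDT0L_closure_properties_lem}(5) disposes of the signs. Next, intersecting the given \(\#\)-separated system with the regular language \((a^{\gamma})^\ast\#(b^{\zeta})^\ast\) — which, as one checks, preserves \(\#\)-separatedness and only mildly worsens \(g\)-boundedness — I may assume \(\gamma\mid x\) and \(\zeta\mid y\) for all \((x,y)\) that occur, so the task reduces to replacing each \(a^x\) by \(a^{x/\gamma}\) and each \(b^y\) by \(b^{y/\zeta}\). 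Finally I would split the extended alphabet into the part \(C_L\) reachable to the left of the \(\#\) and the part \(C_R\) reachable to its right (a bounded-cost duplication, legitimate because \(\#\) is a barrier under every endomorphism of the control), so that the two halves can be altered independently.

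\textbf{The division device (proof of (1)).} I describe the left half; the right half is identical with \(\zeta\) in place of \(\gamma\). Enlarge \(C_L\) to \(C_L\times\mathbb Z/\gamma\), where the second coordinate is, in an honest derivation, the number of terminal \(a\)'s lying to the left of that letter in the final word taken mod \(\gamma\); the left start letter carries residue \(0\). Lift each endomorphism so that this residue is transported across an expansion \(c\mapsto d_1\cdots d_m\): the leftmost child inherits the parent's residue, crossing a terminal \(a\) increments it by \(1\), and it jumps across a non-terminal child by the mod-\(\gamma\) number of terminal \(a\)'s that child will eventually produce. Then intersect with the regular language \(\bigl[(a,0)(a,1)\cdots(a,\gamma-1)\bigr]^\ast\#\bigl[(b,0)(b,1)\cdots(b,\zeta-1)\bigr]^\ast\): a terminal left word of length \(x\) lies in it exactly when its residues form the canonical sequence \(0,1,\dots,\gamma-1,0,1,\dots\), i.e.\ exactly when the derivation is honest and \(\gamma\mid x\), and the pattern singles out one \(a\) per block. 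Applying the free monoid homomorphism that sends \((a,0)\mapsto a\), \((a,r)\mapsto\varepsilon\) for \(r\ne0\), does the analogous thing on the right, and fixes \(\#\), each block \((a,0)\cdots(a,\gamma-1)\) contributes exactly one \(a\), so the image is precisely \(L_{\gamma,\zeta}\); tracing through, the projection of each new derivation to a derivation of \(\mathcal H\) produces a pair \((x,y)\in X\) unchanged, and conversely the canonical residue labelling of any such derivation with \(\gamma\mid x,\ \zeta\mid y\) survives, so no word is lost or gained. This gives (1).

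\textbf{The obstacle: the space bound (part (2)).} The step I expect to be genuinely hard is the \(\ns(fgh)\) estimate. A naive realisation of the residue transport is ruinous: the jump of the residue across a non-terminal child is a mod-\(\gamma\) quantity depending on the rest of the derivation, and if it is guessed independently for each child of each expansion the rational control blows up exponentially in \(|C|\). The real work is to make all of the bookkeeping local — for instance by carrying one additional mod-\(\gamma\) ``weight'' (the terminal-\(a\)-count of a letter's subtree) per letter, constrained only by per-expansion identities (the children's weights sum to the parent's) and boundary conditions (a terminal \(a\) has weight \(1\), the start letter weight \(0\)), with a dead symbol for letters failing a check — and to organise this so that the extended alphabet grows only by a factor linear in \(|\gamma|\) and in \(|\zeta|\), the images of letters grow by at most the same factors, and the rational control is changed only by composition with a bounded number of new endomorphisms. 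The partial-injection hypothesis on \(X\) is what keeps the resulting system free of spurious derivations, so this counting is unambiguous. Once these bounds are established, \cref{EDT0L_closure_properties_lem} with its space estimates (applied to the two regular intersections and the final homomorphism), together with \(\mathcal H\) being \(\exp(f)\)-bounded and \(\ns(f)\)-constructible and \(f\) at least linear, combine to give the claimed \(\ns(fgh)\) bound with \(g\) linear in \(|\gamma|\) and \(h\) linear in \(|\zeta|\).
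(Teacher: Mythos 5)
Your part (1) device is plausible and close in spirit to the paper's construction, but the bookkeeping differs: the paper tracks no prefix residues and imposes no positional pattern at all. It attaches to each letter of the extended alphabet a short word of marks over \(\{\cent, \ \$\}\), lifts each endomorphism of the rational control to the family of endomorphisms that redistribute marks so that every expansion either preserves the mark counts or adds exactly one \(\$\) and \(|\gamma| - 1\) \(\cent\)s, and finishes with one endomorphism sending \(\$\)-marked terminal letters to themselves, \(\cent\)-marked terminal letters to \(\varepsilon\), and everything else to a fail symbol. Because each half of every word is a power of a single letter, only counts matter: acceptance forces each of the \(x\) letters to carry exactly one mark, the marks arrive in blocks of size \(|\gamma|\), so \(\gamma \mid x\) and exactly \(x/|\gamma|\) letters survive --- no regular intersection, no residue transport, and no subtree weights are needed. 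Incidentally, the partial-injectivity hypothesis on \(X\) plays no role in either argument; your appeal to it for ``unambiguity'' is a red herring, as is the preliminary intersection with \((a^{\gamma})^\ast \# (b^{\zeta})^\ast\), which the paper never needs and whose preservation of \(\#\)-separatedness and boundedness you assert without proof.

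The genuine gap is part (2), which you yourself call the real work and then leave as a hope. Your scheme requires, at each expansion, guessing the mod-\(\gamma\) weight of every non-terminal child; since an EDT0L endomorphism is a single deterministic table on the whole extended alphabet, these guesses cannot be realised ``by composition with a bounded number of new endomorphisms'': each original \(\phi\) must be replaced by an entire family of lifted tables, one for each pattern of guesses across all annotated letters, and the whole content of statement (2) is to show that this family can be enumerated and written out --- in particular that a counter of size \(\log\) of its cardinality fits --- within \(\ns(fgh)\), using the hypothesis that \(\mathcal H\) is \(\exp(f)\)-bounded. This is exactly what the paper's proof supplies (the sets \(\Phi_\phi\) and the estimate on \(\log|\Phi_\phi|\), followed by the same treatment of the \(b\)-side), and nothing in your proposal does: the claims that the alphabet grows only linearly in \(|\gamma|\) (with both residues and weights it is quadratic --- harmless, but symptomatic of the estimates not having been done) and that the control changes only by finitely many fixed compositions are unsubstantiated, and the latter is false as stated. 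So the proposal gives a workable outline of (1) by a somewhat more elaborate route, but it does not prove (2).
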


		\begin{proof}

			We will use \(\mathcal H\) to define an EDT0L system for
			\[
				M = \{a^{\frac{x}{|\gamma|}} \# b^y \mid (x, \ y) \in X, \
				\gamma | x\}.
			\]
			Firstly note that if \(|\gamma| = 1\), then \(M = L\), and thus \(M\) is
			accepted by \(\mathcal H\), which satisfies the conditions in (2). So
			assume \(|\gamma| \geq 2\). Let \(\cent\) and \(\$\) be symbols not
			already used. Let \(\hat{c}^{\nu}\) be a distinct copy of \(c\) for each
			\(c \in C\) and \(\nu \in \{\cent, \ \$\}^\ast\). Let \(C^\text{ind} =
			\{\hat{c}^{\nu} \mid \nu \in \{\cent, \ \$\}^\ast, \ |\nu| \leq |\gamma|\}
			\sqcup C \sqcup \{F\}\), where \(F\) is a new symbol. We will use \(F\) as
			a `fail symbol'.

 			Let \(B \subseteq \End(C^\ast)\) be the finite set over which \(\mathcal
 			R\) is a regular language. For each \(\phi \in B\), the finite set
 			\(\Phi_\phi \subseteq \End((C^\text{ind})^\ast)\) is defined as follows. If  \(\nu \in \{\cent,
 			\ \$\}^\ast\) satisfies \(|\nu| \leq |\gamma|\), and \(c \in C\) is such
 			that \(c \phi = d_1 \cdots d_n\), with \(n \geq 1\), \(d_1, \ \ldots, \
 			d_n \in C\) (in particular, \(c \phi \neq \varepsilon\)), then let \(\psi \in
 			\End((C^\text{ind})^\ast)\) be defined by
			\[
				\hat{c}^\nu \psi = \hat{d}_1^{\alpha_1} \cdots \hat{d}_n^{\alpha_n},
			\]
			for some \(\alpha_1, \ \ldots, \ \alpha_n \in \{\cent, \ \$\}^\ast\) such
			that \(|\alpha_i| \leq |\gamma|\) for all \(i\), and one of the following
			holds:
			\begin{enumerate}
				\item \(\#_\$(\alpha_1 \cdots \alpha_n) = \#_\$(\nu)\), and
				\(\#_{\cent}(\alpha_1 \cdots \alpha_n) = \#_{\cent}(\nu)\);
				\item \(\#_\$(\alpha_1 \cdots \alpha_n) = \#_\$(\nu) + 1\), and
				\(\#_{\cent}(\alpha_1 \cdots \alpha_n) = \#_{\cent}(\nu) + |\gamma| -
				1\).
			\end{enumerate}
			If \(c \phi = \varepsilon\), then
			\[
				\hat{c}^{\nu} \psi = \left\{
				\begin{array}{cl}
					F & \nu \neq \varepsilon \\
					\varepsilon & \nu = \varepsilon.
				\end{array}
				\right.
			\]
			In addition, \(\psi\) fixes \(F\), and acts the same way as \(\phi\) on
			letters in \(C\).	We take \(\Phi_\phi\) to be the set of all such
			\(\psi\), as \(\alpha_1, \ \ldots, \ \alpha_n\) vary for each \(c \in C\),
			satisfying the stated conditions. Let \(\bar{\mathcal R}\) be the regular
			set of endomorphisms defined by replacing each occurrence of \(\phi\)
			within \(\mathcal R\) with \(\Phi_\phi\). Now define \(\theta \in
			\End((C^\text{ind})^\ast)\) by
			\[
				\hat{c}^{\nu} \theta = \left\{
				\begin{array}{cl}
					c & c \in \Sigma, \ \nu = \$ \\
					\varepsilon & c \in \Sigma, \ \nu = \cent \\
					F & \text{otherwise},
				\end{array}
				\right.
				\qquad
				F \theta = F, \qquad c \theta = c \text{ for all } c \in C.
			\]
			Let \(\mathcal G = (\Sigma, \ C^\text{ind}, \ \hat{\perp}_1 \# \perp_2, \
			\bar{\mathcal R} \theta)\). By construction, any word in \(\hat{\perp}_1
			\bar{\mathcal R}\) either contains an \(F\), or is a word in \(\perp_1
			\mathcal{ R}\) with hats on letters and indices that concatenate to form a
			word \(\nu \in \{\cent, \ \$\}^\ast\) of length \(n |\gamma|\) for some
			\(n \in \mathbb{Z}_{\geq 0}\), with \(\#_{\cent}(\nu) = n (|\gamma| -
			1)\), and \(\#_\$(\nu) = n\). Thus the set of words in \(\hat{\perp}_1
			\bar{\mathcal R} \theta \cap \Sigma\) equals \(\{a^{\frac{x}{|\gamma|}}
			\mid (x, \ y) \in S, \ \gamma | x\}\). It follows that \(\mathcal G\)
			accepts \(M\).

			We now consider the space complexity in which \(\mathcal G\) can be built.
			Firstly, note that to output \(C^\text{ind}\) we simply need to output
			\((2^{|\gamma| + 1} - 1)\) (the number of words of length at most
			\(|\gamma|\) over a two letter alphabet) additional copies of \(C\), plus
			the letter \(F\). Doing this simply requires us to track the copy we're
			on, and since \(\log(2^{|\gamma| + 1} - 1)\) is linear in \(|\gamma|\),
			this can be done in \(\ns(f)\). The start word can be output in constant
			space.

			We now consider the rational control. To construct \(\bar{\mathcal R}\),
			we need to follow the process to construct \(\mathcal R\), except we need
			to construct \(\Phi_\phi\) whenever the finite-state automaton for
			\(\mathcal R\) constructs \(\phi\). Let \(\psi \in \Phi_\phi\), and note
			that if \(c \in C\) and \(\nu \in \{\cent, \ \$\}^\ast\) is such that
			\(|\nu| \leq |\gamma|\), then there are at most
			\[
				(\max\{|c \varphi| \mid c \in C, \ \varphi \in B\})^{|\nu|}
				\leq (\max\{|c \varphi| \mid c \in C, \ \varphi \in B\})^{|\gamma|}
			\]
			possible values that \(\hat{c}^{\nu} \psi\) can take. As a result,
			\[
				|\Phi_\phi| \leq (\max\{|c \varphi| \mid c \in C, \ \varphi \in
				B\})^{|\gamma|} \cdot |C| \cdot (2^{|\gamma| + 1} - 1).
			\]
			Thus
			\[
				\log|\Phi_\phi| \leq |\gamma| \log (\max\{|c \varphi| \mid c \in C, \
				\varphi \in B\}) + \log|C| + (|\gamma| + 1) \log 2.
			\]
			To construct \(\Phi_\phi\), we simply need to store the information
			required to construct \(\phi\), together with a counter to tell us how
			many \(\psi\) in \(\Phi_\phi\) we have already constructed. Since
			\(\log|\Phi_\phi|\) is bounded by \(fg\) for some linear function \(g\) in
			\(|\gamma|\), we can construct \(\Phi_\phi\), and hence \(\bar{\mathcal
			R}\) in \(\ns(fg)\). As \(\theta\) can be constructed in constant space,
			it follows that the rational control, and hence \(\mathcal G\), can be
			constructed in \(\ns(fg)\).

			To see that the language accepted by \(\mathcal G\) is in fact \(M\),
			first note that for any \(\bar{\phi} \in \bar{\mathcal R}\),
			\(\hat{\perp}_1 \bar{\phi}\) will be obtained from a word \(\perp_1
			\phi\), for some \(\phi \in \mathcal R\) by attaching \(k(|\gamma| - 1)\)
			\(\cent\) indices and \(k\) \(\$\) indices, for some \(k \in
			\mathbb{Z}_{\geq 0}\). This will only be accepted if \((\perp_1 \#
			\perp_2) \phi \in \Sigma^\ast\), and every letter in \(\perp_1 \phi\) has
			precisely one index on it. In such a case, \(|\perp_1 \phi| = k|\gamma|\)
			(in fact \(\perp_1 \phi = a^{\pm k |\gamma|}\)), and precisely \(k\) of
			these letters will be indexed by \(\$\), the rest being indexed by
			\(\cent\). Hitting such a word with \(\theta\) will delete all letters
			indexed with a single \(\cent\), and map the \(\$\)-indexed \(a\)s to
			\(a\) and \(\$\)-indexed \(a^{-1}\)s to \(a^{-1}\), leaving the word
			\(a^{\pm x} \# b^y\) to be accepted. Thus \(M\) is accepted by \(\mathcal
			G\).

			We now show that
			\[
				N = \{a^{\frac{x}{\gamma}} \# b^y \mid (x, \ y) \in X, \
			 \gamma | x\}
			\]
			is accepted by an EDT0L system, constructible in \(\ns(fg)\). Note that if
			\(\gamma \geq 0\), then \(M = N\), and there is nothing to prove.
			Otherwise, \(\gamma < 0\). Define \(\pi \in \End((C^\text{ind})^\ast)\) by
			\(a \pi = a^{-1}\), \(a^{-1} \pi = a\) and all other letters are fixed by
			\(\pi\). Then \((\Sigma, \ C^{\text{ind}}, \ \perp_1 \# \perp_2, \
			\bar{\mathcal R} \theta \pi)\) accepts \(N\), as we have just flipped the
			sign of the \(a\)s in \(M\). Moreover, as \(\mathcal G\) is constructible
			in \(\ns(fg)\), so is our system for \(N\). In addition, the stated bounds
			on the size of the extended alphabet and the images of endomorphisms of
			\(\mathcal G\) hold for our system for \(N\) as well.

			To obtain an EDT0L system for \(\{a^{\frac{x}{\gamma}} \#
			b^{\frac{y}{\zeta}} \mid (x, \ y) \in X, \ \gamma | x, \ \zeta | y\}\), we
			simply apply the same method we used to obtain \(N\) from \(L\) to \(N\),
			except modifying \(\perp_2\) and \(b\), rather than \(\perp_1\) and \(a\).
		\end{proof}

\section{Pell's equation}
	\label{pell_eqn_sec}

	The purpose of this section is to study solutions to Pell's equation, which
	eventually allows us to show that the solution language to a
	quadratic equation in the ring of integers is EDT0L.

	We start with a lemma that shows languages that arise as part of recursively
	defined integer sequences with non-negative integer coefficients are EDT0L.
	We will later show that solutions to Pell's equation are of this form.

	\begin{lem}
		\label{gen_fibonacci_EDT0L_lem}
		Let \((p_n)_{n \geq 0}\), \((q_n)_{n \geq 0}\) and \((r_n)_{n \geq 0}\) be
		integer sequences, defined recursively by a relation
		\[
			p_n = \alpha_1 p_{n - 1} + \alpha_2 q_{n - 1} + \alpha_3 r_{n - 1}, \quad
			q_n = \beta_1 p_{n - 1} + \beta_2 q_{n - 1} + \beta_3 r_{n - 1}, \quad
			r_n = \gamma_1 p_{n - 1} + \gamma_2 q_{n - 1} + \gamma_3 r_{n - 1}
		\]
		where \(\alpha_1, \ \alpha_2, \ \alpha_3, \ \beta_1, \ \beta_2, \ \beta_3, \
		\gamma_1, \ \gamma_2, \ \gamma_3 \in \mathbb{Z}_{\geq 0}\). Suppose also
		that \(p_0, \ q_0, \ r_0 \in \mathbb{Z}_{\geq 0}\) or \(p_0, \ q_0, \ r_0
		\in \mathbb{Z}_{\leq 0}\). Then
		\begin{enumerate}
			\item The language \(L = \{a^{p_n} \mid n \in \mathbb{Z}_{\geq
			0}\}\) is EDT0L;
			\item Taking the input size to be \(I = \max(\alpha_1, \ \alpha_2, \
			\alpha_3, \ \beta_1, \ \beta_2, \ \beta_3, \ \gamma_1, \ \gamma_2, \
			\gamma_3, \ p_0, \ q_0, \ r_0)\), an EDT0L system \(\mathcal H\) for \(L\)
			is constructible in non-deterministic logarithmic space;
			\item The system \(H\) is \(f\)-bounded for some linear function \(f\);
			\item The rational control of \(\mathcal H\) is of the form \(\theta
			\varphi^\ast \psi\), and \(\perp \theta \varphi^n \psi = a^{p_n}\),
			where \(\perp\) is the start word of \(\mathcal H\).
		\end{enumerate}
	\end{lem}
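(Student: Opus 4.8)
The plan is to realise the recurrence as parallel letter substitution. I would work over the extended alphabet \(C = \{a, a^{-1}, P, Q, R, \perp\}\) and terminal alphabet \(\Sigma = \{a, a^{-1}\}\), using the three non-terminals \(P, Q, R\) to carry the three coordinates of the vector \((p_n, q_n, r_n)\). First I would reduce to the non-negative case: put \(\sigma = 1\) if \(p_0, q_0, r_0 \in \mathbb{Z}_{\geq 0}\) and \(\sigma = -1\) otherwise, and observe, by induction on \(n\) using that the recurrence coefficients lie in \(\mathbb{Z}_{\geq 0}\), that \(\sigma p_n, \sigma q_n, \sigma r_n \in \mathbb{Z}_{\geq 0}\) for all \(n\) and that \((\sigma p_n, \sigma q_n, \sigma r_n)\) satisfies the same recurrence. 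I would then define endomorphisms of \(C^\ast\): let \(\theta\) fix \(a, a^{-1}, P, Q, R\) and send \(\perp \mapsto P^{\sigma p_0} Q^{\sigma q_0} R^{\sigma r_0}\); let \(\varphi\) fix \(a, a^{-1}, \perp\) and act by
\[
  P \mapsto P^{\alpha_1} Q^{\beta_1} R^{\gamma_1}, \qquad Q \mapsto P^{\alpha_2} Q^{\beta_2} R^{\gamma_2}, \qquad R \mapsto P^{\alpha_3} Q^{\beta_3} R^{\gamma_3};
\]
and let \(\psi\) fix \(a, a^{-1}\), delete \(\perp, Q, R\), and send \(P \mapsto a^{\sigma}\). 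The system would be \(\mathcal H = (\Sigma, C, \perp, \theta\varphi^\ast\psi)\), whose rational control already has the shape required in (4).

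The key step is the verification that \(\perp\theta\varphi^n\psi = a^{p_n}\). For this I would track the Parikh vector \((\#_P w, \#_Q w, \#_R w)\) of the intermediate words \(w = \perp\theta\varphi^n\). Because \(\varphi\) is a monoid endomorphism, the Parikh vector of \(w\varphi\) is obtained by applying to the Parikh vector of \(w\) the matrix whose columns are the Parikh vectors of \(P\varphi, Q\varphi, R\varphi\), namely
\[
  A = \begin{pmatrix} \alpha_1 & \alpha_2 & \alpha_3 \\ \beta_1 & \beta_2 & \beta_3 \\ \gamma_1 & \gamma_2 & \gamma_3 \end{pmatrix},
\]
and \(A\,(\sigma p_{n-1}, \sigma q_{n-1}, \sigma r_{n-1})^{\mathrm T} = (\sigma p_n, \sigma q_n, \sigma r_n)^{\mathrm T}\) is exactly the recurrence. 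Since \(\perp\theta\) has Parikh vector \((\sigma p_0, \sigma q_0, \sigma r_0)^{\mathrm T}\) and contains no \(a^{\pm 1}\), an induction gives that \(\perp\theta\varphi^n\) is a word over \(\{P, Q, R\}\) containing \(\sigma p_n\) letters \(P\). Applying \(\psi\) deletes the \(Q\)s and \(R\)s and turns each \(P\) into \(a^\sigma\), so \(\perp\theta\varphi^n\psi = (a^\sigma)^{\sigma p_n} = a^{p_n}\), a word over \(\Sigma\). Hence \(\{\omega\phi \mid \phi \in \theta\varphi^\ast\psi\} = \{a^{p_n} \mid n \geq 0\}\), and intersecting with \(\Sigma^\ast\) changes nothing, proving (1) and (4).

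For (2) and (3) I would simply tally resources. The alphabet \(C\) has six letters, a constant; the images \(P\varphi, Q\varphi, R\varphi\) have lengths \(\alpha_i + \beta_i + \gamma_i \leq 3I\), the image \(\perp\theta\) has length \(|p_0| + |q_0| + |r_0| \leq 3I\), and every image under \(\psi\) has length at most \(1\), so \(\mathcal H\) is \(f\)-bounded with \(f(I) = 3I + 6\), which gives (3). To write \(\mathcal H\) down, note that producing \(C\), the start word \(\perp\), and the regular expression \(\theta\varphi^\ast\psi\) costs constant space, while producing each of the three endomorphisms amounts to a constant number of counting loops running up to values bounded by \(I\), each needing \(O(\log I)\) bits of workspace; this is deterministic, hence in particular non-deterministic, logarithmic space, giving (2).

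I do not expect a serious obstacle here; the only two points that need care are the initial reduction to the non-negative case — without it a Parikh vector, which is necessarily non-negative, could not faithfully record a negative \(p_n\) — and making explicit that a monoid endomorphism acts on Parikh vectors via the matrix \(A\), which is precisely what converts ``iterate \(\varphi\)'' into ``iterate the linear recurrence''. If the paper's notion of \(f\)-bounded is read literally as applying only to \(\#\)-separated systems, item (3) should be understood as the obvious extension of conditions (1)--(2) of that definition to an arbitrary EDT0L system.
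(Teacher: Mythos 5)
Your proposal is correct and follows essentially the same route as the paper: a start symbol expanded by \(\theta\) into the seed values, an endomorphism \(\varphi\) realising the linear recurrence by parallel substitution, and a final projection \(\psi\), with the rational control \(\theta\varphi^\ast\psi\) as required. The only cosmetic difference is that you normalise signs up front via \(\sigma\) and use three unsigned non-terminals, whereas the paper keeps signed copies \(a_p^{\pm 1}, a_q^{\pm 1}, a_r^{\pm 1}\) and tracks a signed letter count; both rest on the same observation that non-negative coefficients and same-sign seeds keep all \(p_n, q_n, r_n\) of one sign.
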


	\begin{proof}
		We will define an EDT0L system to accept \(L\). Let \(\Sigma = \{a, \
		a^{-1}\}\). Our extended alphabet will be \(C = \Sigma
		\cup \{a_p, \ a_p^{-1}, \ a_q, \ a_q^{-1}, \ a_r, \ a_r^{-1}, \ \perp\}\),
		and our start word will be \(\perp\). Define \(\theta \in \End(C^\ast)\) by
		\[
			c \theta = \left\{
			\begin{array}{cl}
				a_p^{p_0} a_q^{q_0} a_r^{r_0} & c = \perp \\
				c & \text{otherwise}.
			\end{array}
			\right.
		\]
		Define \(\varphi \in \End(C^\ast)\) by
		\begin{align*}
			a_p^{\pm 1} \varphi & = a_p^{\pm \alpha_1} a_q^{\pm \beta_1} a_r^{\pm \gamma_1} \\
			a_q^{\pm 1} \varphi & = a_p^{\pm \alpha_2} a_q^{\pm \beta_2} a_r^{\pm \gamma_2} \\
			a_r^{\pm 1} \varphi & = a_p^{\pm \alpha_3} a_q^{\pm \beta_3} a_r^{\pm \gamma_3}
		\end{align*}
		and fix all other letters. Finally, define \(\psi \in \End(C^\ast)\) by
		\begin{align*}
			a_p^{\pm 1} \psi & = a^{\pm 1} \\
			a_q^{\pm 1} \psi & = a_r^{\pm_1} \psi = \varepsilon,
		\end{align*}
		and all other letters are fixed.
		Our rational control will be \(\theta \varphi^\ast \psi\).

		First note that \(u = \perp \theta \varphi^n\) contains either \(a_p\) or
		\(a_p^{-1}\), but not both, and the same holds for \(a_q\) and \(a_q^{-1}\),
		and \(a_r\) and \(a_r^{-1}\). So we can abuse notation and take the
		definition of \(\#_{a_p}\) when applied to such a word to be \(\#_{a_p}(u)\)
		if \(u\) contains an \(a_p\), \(-\#_{a_p^{-1}}(u)\) if \(u\) contains an
		\(a_p^{-1}\), and \(0\) if it contains neither. We similarly abuse notation
		with \(\#_{a_q}\) and \(\#_{a_r}\).

		We will show by induction that \(u = \perp \theta \varphi^n\) satisfies
		\(\#_{a_q}(u) = p_n\), \(\#_{a_q}(u) = q_n\), and \(\#_{a_r}(u) = r_n\).
		This holds by definition for \(n = 0\). Inductively suppose it is true for
		some \(k - 1\). Then \(\perp \theta \varphi^k = u\), for some \(u \in \{a_p,
		\ a_p^{-1}, \ a_q, \ a_q^{-1}, \ a_r, \ a_r^{-1}\}^\ast\), with
		\(\#_{a_q}(u) = p_{k - 1}\), \(\#_{a_q}(u) = q_{k - 1}\), and \(\#_{a_r}(u)
		= r_{k - 1}\). Using the definition of \(\varphi\), and our inductive
		hypothesis we have
		\begin{align*}
		& \#_{a_p}(u \varphi) = \alpha_1 \#_{a_p}(u) + \alpha_2 \#_{a_q}(u)
		+ \alpha_3 \#_{a_r} (u) = \alpha_1 p_{k - 1} + \alpha_2 q_{k - 1}
		+ \alpha_3 r_{k - 1} = p_k \\
		& \#_{a_q}(u \varphi) = \beta_1 \#_{a_p}(u) + \beta_2 \#_{a_q}(u)
		+ \beta_3 \#_{a_r} (u) = \beta_1 p_{k - 1} + \beta_2 q_{k - 1}
		+ \beta_3 r_{k - 1} = q_k \\
		& \#_{a_r}(u \varphi) = \gamma_1 \#_{a_p}(u) + \gamma_2 \#_{a_q}(u)
		+ \gamma_3 \#_{a_r} (u) = \gamma_1 p_{k - 1} + \gamma_2 q_{k - 1}
		+ \gamma_3 r_{k - 1} = r_k.
		\end{align*}
		It now follows that \(\perp \theta \varphi^n \psi = a^{p_n}\), and thus (1)
		and (4) are true.

	 	We now show that the EDT0L system \((\Sigma, \ C, \ \perp, \ \theta
	 	\varphi^\ast \psi)\) is constructible in non-deterministic linear space.
	 	Writing down \(\Sigma\), \(C\), \(\psi\) and the start word can be done in
	 	constant space. Writing down \(\theta\) can be done by remembering \(p_0\),
	 	\(q_0\) and \(r_0\), and thus can be done in non-deterministic logarithmic
	 	space, since storing an integer \(r\) requires \(\log(r)\) plus a constant
	 	bits. It remains to show that \(\varphi\) can be defined in
	 	non-deterministic logarithmic space. To write down \(\varphi\), we simply
	 	need to know the coefficients \(\alpha_i\), \(\beta_i\) and \(\gamma_i\)
	 	for \(i \in \{1, \ 2, \ 3\}\). Since these can all be stored using \(\log
	 	\alpha_i\), \(\log \beta_i\) and \(\log \gamma_i\) bits, respectively plus
	 	constants, (2) follows.

		Finally note that \(|C| = 8\), which is constant. In addition, \(|c
		\varphi|\), for \(c \in C\), is bounded by a linear function of the values
		\(\alpha_i\), \(\beta_i\) and \(\gamma_i\), \(|c \theta| \leq p_0 + q_0 +
		r_0\), and \(|c \psi| \leq 1\). We have now shown (3).
	\end{proof}

	To show that the solution language to a general quadratic equation in two
	variables is EDT0L, we follow Lagrange's method to reduce it to the
	generalised Pell's equation, and then to Pell's equation. This reduction is
	detailed in \cite{new_look_old_equation}. We start with the definition of
	Pell's equation.

	\begin{dfn}
		\textit{Pell's equation} is the equation \(X^2 - DY^2 = 1\) in the ring of
		the integers, where \(X\) and \(Y\) are variables, and \(D \in \mathbb{Z}_{>
		0}\) is not a perfect square. The \textit{fundamental solution} to Pell's
		equation \(X^2 - DY^2 = 1\) is the minimal (with respect to the \(\ell^1\)
		metric on \(\mathbb{Z}^2\)) non-negative integer solution that is not \((1,
		\ 0)\).
	\end{dfn}

	The solutions to Pell's equation have long been understood. The following
	lemma details one of several ways of constructing them.

	\begin{lem}[\cite{quad_dioph_eqns_book}, Theorem 3.2.1]
		\label{Pell_eqn_sequence_lem}
			There are infinitely many solutions to Pell's equation \(X^2 - DY^2 = 1\),
			and these are \(\{(x_n, \ y_n) \mid n \in \mathbb{Z}_{\geq 0}\}\), where
			\((x_0, \ y_0) = (1, \ 0)\), and \((x_n, \ y_n)\) is recursively defined
			by
			\[
				x_n = x_1 x_{n - 1} + Dy_1 y_{n - 1}, \qquad y_n = y_1 x_{n - 1} + x_1
				y_{n - 1},
			\]
			where \((x_1, \ y_1)\) is the fundamental solution.
	\end{lem}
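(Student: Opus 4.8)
The plan is to work in the ring $\mathbb{Z}[\sqrt D]$ and exploit the multiplicativity of the norm form $N(x + y\sqrt D) = x^2 - Dy^2$. A pair $(x,y)$ solves Pell's equation exactly when $N(x + y\sqrt D) = 1$. Since $D$ is not a perfect square, $\sqrt D$ is irrational, so $(x,y) \mapsto x + y\sqrt D$ is injective and comparing rational and irrational parts of an identity in $\mathbb{Z}[\sqrt D]$ is legitimate. The key algebraic fact is $(x + y\sqrt D)(x' + y'\sqrt D) = (xx' + Dyy') + (xy' + x'y)\sqrt D$ together with $N(\alpha\beta) = N(\alpha)N(\beta)$; in particular the product of two solutions is again a solution, and a norm-$1$ element $\alpha = x + y\sqrt D$ with $\alpha > 1$ automatically has $x, y \geq 1$, since then $0 < x - y\sqrt D = 1/\alpha < 1$, so $2x = \alpha + \alpha^{-1} > 1$ and $2y\sqrt D = \alpha - \alpha^{-1} > 0$.

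First I would prove the existence of a solution other than $(\pm 1, 0)$, which is the only step requiring real input. By Dirichlet's theorem on Diophantine approximation there are infinitely many $(x,y)$ with $y > 0$ and $|x - y\sqrt D| < 1/y$, hence $|x^2 - Dy^2| = |x - y\sqrt D|\,|x + y\sqrt D| < 2\sqrt D + 1$. Pigeonholing, some non-zero integer $k$ with $|k| < 2\sqrt D + 1$ arises as $x^2 - Dy^2$ for infinitely many such pairs; a second pigeonhole on residues modulo $|k|$ yields two of them, $(x_1,y_1) \neq (x_2,y_2)$ with $x_1 \equiv x_2$, $y_1 \equiv y_2 \pmod{|k|}$. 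Then $(x_1 + y_1\sqrt D)(x_2 - y_2\sqrt D)$ is divisible by $k$ in $\mathbb{Z}[\sqrt D]$ and the quotient has norm $k^2/k^2 = 1$; it is not $\pm 1$ because the two pairs are distinct with positive second coordinate. Replacing this quotient by the one of $\pm\alpha^{\pm 1}$ that exceeds $1$ produces a norm-$1$ element with both coordinates positive. Among all solutions with $x, y > 0$ there is then a minimal one for the $\ell^1$ metric, namely the fundamental solution $(x_1, y_1)$, and we set $\xi := x_1 + y_1\sqrt D > 1$.

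Next I would show that the non-negative solutions are exactly the pairs $(x_n, y_n)$ defined by $x_n + y_n\sqrt D = \xi^n$; these are pairwise distinct since $\xi > 1$, which already gives the "infinitely many" claim. Conversely, if $(x,y)$ is a solution with $x, y \geq 0$, put $\eta := x + y\sqrt D \geq 1$ and choose $n \geq 0$ with $\xi^n \leq \eta < \xi^{n+1}$. Then $\eta\xi^{-n}$ has norm $1$ and lies in $[1, \xi)$; by the remark in the first paragraph its coordinates are non-negative, so if it were $> 1$ it would be a positive solution smaller than $(x_1,y_1)$, contradicting minimality; hence $\eta\xi^{-n} = 1$ and $\eta = \xi^n$. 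Finally the recurrence is just $\xi^n = \xi\cdot\xi^{n-1}$: expanding $(x_1 + y_1\sqrt D)(x_{n-1} + y_{n-1}\sqrt D)$ with the product formula and matching rational and irrational parts gives precisely $x_n = x_1 x_{n-1} + D y_1 y_{n-1}$ and $y_n = y_1 x_{n-1} + x_1 y_{n-1}$.

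The main obstacle is the existence step: producing the fundamental solution from scratch needs Dirichlet approximation plus the double pigeonhole argument, with a little care to keep signs straight, to check that the quotient by $k$ actually lands in $\mathbb{Z}[\sqrt D]$, and to verify it is non-trivial. Everything afterwards — the description of solutions as powers of $\xi$ and the extraction of the recurrence — is routine manipulation in $\mathbb{Z}[\sqrt D]$ using only the irrationality of $\sqrt D$ and multiplicativity of $N$. Since the statement is quoted from \cite{quad_dioph_eqns_book}, one could instead simply invoke it, but the argument above is the standard self-contained route.
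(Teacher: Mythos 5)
Your argument is correct, but note that the paper does not prove this statement at all: it is quoted verbatim from \cite{quad_dioph_eqns_book} (Theorem 3.2.1) and used as a black box, so there is no ``paper proof'' to compare against. What you have written is the standard self-contained proof that the cited reference (and most texts) give: work in \(\mathbb{Z}[\sqrt{D}]\) with the norm form, get a nontrivial norm-\(1\) element via Dirichlet approximation and the double pigeonhole, take the minimal solution \(\xi = x_1 + y_1\sqrt{D} > 1\), show every non-negative solution is a power of \(\xi\) by the sandwiching argument \(\xi^n \le \eta < \xi^{n+1}\), and read off the recurrence from \(\xi^n = \xi\cdot\xi^{n-1}\). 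Two small points worth making explicit if you were to write this out in full. First, the lemma as stated should be read as describing the \emph{non-negative} solutions (the full integer solution set also contains the sign-flips \((\pm x_n,\pm y_n)\)); your proof correctly characterises the non-negative ones, which is what the paper actually uses later, where \((x_n),(y_n)\subseteq\mathbb{Z}_{\geq 0}\). Second, in the minimality step you should spell out why a norm-\(1\) element \(\beta\) with \(1<\beta<\xi\) contradicts minimality of the fundamental solution in the \(\ell^1\) sense used by the paper: since \(t\mapsto t+t^{-1}\) and \(t\mapsto t-t^{-1}\) are increasing on \([1,\infty)\), \(\beta<\xi\) forces both coordinates of \(\beta\) to be strictly smaller than \((x_1,y_1)\), hence smaller \(\ell^1\) norm; this is the same manipulation as your opening remark, so it is a one-line addition rather than a gap.
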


	We give an explicit example of Pell's equation and its solutions.

	\begin{ex}
		Consider Pell's equation \(X^2 - 2Y^2 = 1\). It is not hard to check using
		brute force that the fundamental solution is \((3, \ 2)\) (although there
		are more efficient methods of doing this: see for example
		\cite{quad_dioph_eqns_book}). Thus by Lemma \ref{Pell_eqn_sequence_lem}, we
		can construct the set of all solutions using the sequence \((x_n, \ y_n)
		\subseteq \mathbb{Z}^2\), defined recursively by \((x_0, \ y_0) = (1, \
		0)\), and
		\[
			x_n = 3 x_{n - 1} + 4 y_{n - 1}, \qquad y_n = 2 x_{n - 1} + 3
			y_{n - 1}.
		\]
		At this point, we could just apply Lemma \ref{gen_fibonacci_EDT0L_lem}  and
		Lemma \ref{EDT0L_closure_properties_lem} to show that the language \(\{a^x
		\# a^y \mid (x, \ y) \in \mathbb{Z}_{\geq 0}^2 \text{ is a solution to } X^2
		- 2Y^2 = 1\}\) is EDT0L, however we will explicitly construct an EDT0L
		system. Our extended alphabet will be \(C = \{a_x, \ \bar{a}_x, \ \ a_y, \
		\bar{a}_y, \ a, \ \#\}\) and our start word will be \(a_x \# \bar{a}_x\).
		Let \(\varphi \in End(C^\ast)\) be defined by
		\begin{align*}
			a_x \varphi & = a_x^3 \bar{a}_y^2
			& \bar{a}_x \varphi & = \bar{a}_x^3 a_y^2 & \\
			a_y \varphi & = \bar{a}_x^4 a_y^3
			& \bar{a}_y \varphi & = a_x^4 \bar{a}_y^3, & \\
			a \varphi & = a & \# \varphi & = \#.
		\end{align*}
		Define \(\theta \in \End(C^\ast)\) by
		\begin{align*}
			a_x \theta & = a_y \theta = a \theta = a \\
			\bar{a}_x \theta & = \bar{a}_y \theta = \varepsilon \\
			\# \theta & = \#.
		\end{align*}
		Our rational control will be \(\varphi^\ast \theta\) (alternatively, see
		Figure \ref{Pell_eqn_ex_fig}).
		\begin{figure}
			\caption{Rational control for \(L = \{a^x \# a^y \mid (x, \ y) \in
			\mathbb{Z}_{\geq 0}^2 \text{ is a solution to } X^2 - 2Y^2 = 1\}\), with
			start state \(q_0\) and accept state \(q_1\).}
			\label{Pell_eqn_ex_fig}
			\begin{tikzpicture}
				[scale=.8, auto=left,every node/.style={circle}]
				\tikzset{
				on each segment/.style={
					decorate,
					decoration={
						show path construction,
						moveto code={},
						lineto code={
							\path [#1]
							(\tikzinputsegmentfirst) -- (\tikzinputsegmentlast);
						},
						curveto code={
							\path [#1] (\tikzinputsegmentfirst)
							.. controls
							(\tikzinputsegmentsupporta) and (\tikzinputsegmentsupportb)
							..
							(\tikzinputsegmentlast);
						},
						closepath code={
							\path [#1]
							(\tikzinputsegmentfirst) -- (\tikzinputsegmentlast);
						},
					},
				},
				mid arrow/.style={postaction={decorate,decoration={
							markings,
							mark=at position .5 with {\arrow[#1]{stealth}}
						}}},
			}

				\node[draw] (q0) at (0, 0) {\(q_0\)};
				\node[draw, double] (q1) at (5, 0)  {\(q_1\)};

				\draw[postaction={on each segment={mid arrow}}] (q0) to (q1);

				\draw[postaction={on each segment={mid arrow}}] (q0) to
				[out=140, in=-140, distance=2cm] (q0);

				\node (l1) at (-1.9, 0) {\(\varphi\)};

				\node (l2) at (2.4, 0.4) {\(\theta\)};

			\end{tikzpicture}
		\end{figure}
		By construction, \(\#_{a_x}(a_x \# \bar{a}_x \varphi^n) = \#_{\bar{a}_x}(a_x
		\# \bar{a}_x \varphi^n) = x_n\) and \(\#_{a_y}(a_x \# \bar{a}_x \varphi^n) =
		\#_{\bar{a}_y}(a_x \# \bar{a}_x \varphi^n) = y_n\), and thus \(a_x \#
		\bar{a}_x \varphi^n \theta = a^{x_n} \# a^{y_n}\).
	\end{ex}

	In addition to the recursive structure of all solutions, we need a bound on
	the size of the fundamental solution. This allows us to give a bound on the
	space complexity in which the EDT0L system can be constructed.

	\begin{lem}[\cite{lentra_pell_eqn_bound}, Section 3]
		\label{pell_eqn_bound_lem}
		Let \((x_1, \ y_1)\) be the fundamental solution to Pell's equation
		\(X^2 - DY^2 = 1\). Then
		\[
			\log(x_1 + y_1 \sqrt{D}) < \sqrt{D} (\log(4D) + 2).
		\]
	\end{lem}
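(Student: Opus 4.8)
The plan is to recognise this as a classical upper bound on the regulator of a real quadratic order. Write $\Delta = 4D$ for the discriminant of the order $\mathbb{Z}[\sqrt D]$ (this is genuinely a quadratic order since $D$ is not a square). The solutions of $X^2 - DY^2 = 1$ are exactly the units of $\mathbb{Z}[\sqrt D]$ of norm $+1$, so $x_1 + y_1\sqrt D$ is either the fundamental unit $\varepsilon_\Delta > 1$ of this order, or its square $\varepsilon_\Delta^2$ in the case where $\varepsilon_\Delta$ has norm $-1$. In either case $\log(x_1 + y_1\sqrt D) \le 2\log\varepsilon_\Delta$, so it suffices to bound the regulator $\log\varepsilon_\Delta$ from above by $\tfrac12\sqrt D(\log(4D)+2)$.

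The main tool I would use is Dirichlet's analytic class number formula for the (possibly non-maximal) quadratic order of discriminant $\Delta$, in the form $h(\Delta)\log\varepsilon_\Delta = \tfrac12\sqrt{\Delta}\,L(1,\chi_\Delta)$, where $\chi_\Delta(n) = \left(\tfrac{\Delta}{n}\right)$ is the Kronecker symbol (a genuine real Dirichlet character modulo $|\Delta|$, since $\Delta \equiv 0 \bmod 4$) and $h(\Delta)\ge 1$ is the associated form class number. Using $h(\Delta)\ge 1$ and $\Delta = 4D$ gives $\log(x_1 + y_1\sqrt D) \le 2\log\varepsilon_\Delta \le \sqrt{\Delta}\,L(1,\chi_\Delta) = 2\sqrt D\,L(1,\chi_{4D})$. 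It then remains to insert the classical elementary upper bound $L(1,\chi) < \tfrac12\log q + 1$, valid for every real non-principal character $\chi$ of modulus $q$; this is proved by Abel summation applied to $L(1,\chi) = \sum_{n\ge 1}\chi(n)/n$, splitting the sum around $\sqrt q$ and using standard estimates for the incomplete character sums $\sum_{n\le t}\chi(n)$ (for instance via the Pólya Fourier expansion). Taking $q = 4D$ yields $\log(x_1 + y_1\sqrt D) < 2\sqrt D\left(\tfrac12\log(4D) + 1\right) = \sqrt D(\log(4D) + 2)$, which is exactly the claimed inequality.

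I expect the difficulties to be bookkeeping rather than conceptual. First, the class number formula is cleanest for fundamental discriminants, so for general non-square $D$ (where $4D$ need not be fundamental and $\chi_{4D}$ is then an imprimitive Kronecker symbol) one must use the version valid for an arbitrary discriminant, i.e. the Gauss/Dirichlet count of classes of binary quadratic forms of discriminant $\Delta$; alternatively one reduces to the maximal order $\mathcal{O}_K$, writes $4D = \Delta_K f^2$, and tracks the index $[\mathcal{O}_K^\times : \mathbb{Z}[\sqrt D]^\times]$, checking that the resulting loss is absorbed by $\sqrt{\Delta_K}\cdot f = \sqrt{\Delta}$. Second, pinning the constant in the $L(1,\chi)$ estimate down to exactly $\tfrac12\log q + 1$ (rather than a cruder $\log q + O(1)$) is where the factor $\tfrac12$ in $\sqrt D(\log(4D)+2)$ genuinely comes from, and this requires the careful form of the partial summation argument; it is classical but must be done with the sharp incomplete character sum bound. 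The finitely many small values of $D$, and the parity conditions on $D \bmod 4$, only make the estimate easier and can be checked by hand. (An essentially equivalent purely elementary route runs through Gauss's theory of reduced indefinite binary quadratic forms: the length of the principal cycle controls $\log\varepsilon_\Delta$, and bounding the total number of reduced forms of discriminant $\Delta$ reproduces the same $L$-value estimate in disguise.)
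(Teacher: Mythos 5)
The paper itself contains no proof of this lemma: it is imported verbatim from the cited survey (in substance it is Hua's classical bound on the least solution of Pell's equation). So the only meaningful comparison is with the classical argument behind that citation, and your outline is exactly that argument: identify \(x_1+y_1\sqrt D\) with the smallest norm-\(+1\) unit \(>1\) of the order \(\mathbb{Z}[\sqrt D]\) of discriminant \(\Delta=4D\) (so at most \(\varepsilon_\Delta^2\)), invoke the class number formula for the order with \(h(\Delta)\ge 1\), and bound \(L(1,\chi_\Delta)<\tfrac12\log\Delta+1\), after which \(2\sqrt D\left(\tfrac12\log(4D)+1\right)=\sqrt D(\log(4D)+2)\) gives the statement. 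In outline this is correct and is the same route as the source.

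The one step you should not wave through as bookkeeping is the non-fundamental case, and there your proposed fallback fails as stated. Write \(4D=\Delta_K f^2\). The Kronecker symbol mod \(4D\) is then imprimitive and \(L(1,\chi_{4D})=L(1,\chi_{\Delta_K})\prod_{p\mid f}\left(1-\chi_{\Delta_K}(p)p^{-1}\right)\), where the Euler factors can exceed \(1\). Reducing to the maximal order and ``checking that the loss is absorbed by \(\sqrt{\Delta_K}\,f=\sqrt\Delta\)'' cannot work, because the unit index (equivalently the Euler factors) enters multiplicatively on the \(L\)-value side, while enlarging the modulus from \(\Delta_K\) to \(\Delta\) only gains the additive term \(\log f^2\) inside the logarithm. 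Concretely, with \(f=2\) and \(\chi_{\Delta_K}(2)=-1\) the factor is \(\tfrac32\), and \(\tfrac32\left(\tfrac12\log\Delta_K+1\right)>\tfrac12\log(4\Delta_K)+1\) already for \(\Delta_K\ge 3\); so chaining Hua's primitive-character bound with the Euler factors loses exactly the constant you need (it does not, of course, show the lemma false). To land on \(\sqrt D(\log(4D)+2)\) you must either prove the estimate \(L(1,\chi)<\tfrac12\log q+1\) directly for the (possibly imprimitive) real character \(\left(\frac{4D}{\cdot}\right)\) with \(q=4D\) --- i.e.\ run the partial-summation/character-sum argument for that symbol itself, which is what the classical treatment does --- or substitute a stronger bound than Hua's for the primitive part. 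The remainder of your sketch (the factor \(2\) from \(\varepsilon_\Delta^2\), the use of \(h(\Delta)\ge1\), and the final arithmetic) is fine.
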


	Understanding solutions to arbitrary two-variable quadratic equations using
	Lagrange's method requires us to have an understanding of the images of the
	solutions to Pell's equation under linear functions: that is \(\alpha x +
	\beta y + \gamma\) for constant \(\alpha, \ \beta, \ \gamma \in \mathbb{Z}\),
	where \((x, \ y)\) is a solution. If \(\alpha\), \(\beta\) and \(\gamma\) are
	either all non-negative or all non-positive, this corresponds to concatenating
	EDT0L languages in parallel, which is not too difficult using standard EDT0L
	constructions.

	On the other hand, if the signs of these three integers are not all the same,
	more work needs to be done. This occurs because we represent the integer \(n
	\in \mathbb{Z}\) by \(a^n\), where \(a\) is a letter. Thus if we want to `add'
	\(-3\) and \(5\), this corresponds in language terms to trying to concatenate
	\(a^{-3}\) and \(a^5\), which results in \(a^{-3} a^5\), which is not equal
	(as a word) to \(a^2\). One cannot, in general, freely reduce all words in an
	EDT0L language to form an EDT0L language. There are in fact cases where such a
	reduction will result in a language that is not recursive; that is a language
	which is not accepted by a Turing machine, or whose complement is not accepted
	by a Turing machine.

	To tackle the harder cases presented to us by `subtraction', we instead study
	the integer sequences themselves, and show they satisfy recurrence relations
	that can be used to define EDT0L systems.

	\begin{lem}
		\label{Pell_seq_single_rec_relation_lem}
		Let \((x_n), \ (y_n) \subseteq \mathbb{Z}_{\geq 0}\) be sequences of
		solutions to Pell's equation \(X^2 - DY^2 = 1\). Let \(\alpha, \ \beta \in
		\mathbb{Z}_{\geq 0}\). Let \((z_n) \subseteq \mathbb{Z}\) be the sequence
		defined by \(z_n = \alpha x_n - \beta y_n\). Then, for all \(n \in
		\mathbb{Z}_{\geq 2}\)
		\begin{enumerate}
			\item \(x_n = 2 x_1 x_{n - 1} - x_{n - 2}\);
			\item \(y_n = 2 x_1 y_{n - 1} - y_{n - 2}\);
			\item \(z_n = 2 x_1 z_{n - 1} - z_{n - 2}\).
		\end{enumerate}
	\end{lem}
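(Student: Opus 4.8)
The plan is to establish all three recurrences from the single observation that the pairs $(x_n, y_n)$ satisfy a linear recursion governed by the matrix $M = \begin{pmatrix} x_1 & Dy_1 \\ y_1 & x_1 \end{pmatrix}$, so that $\begin{pmatrix} x_n \\ y_n \end{pmatrix} = M \begin{pmatrix} x_{n-1} \\ y_{n-1} \end{pmatrix}$ by \Cref{Pell_eqn_sequence_lem}. The key algebraic fact is that $M$ satisfies its own characteristic polynomial: by Cayley--Hamilton (or a direct check), $M^2 = (\operatorname{tr} M) M - (\det M) I = 2x_1 M - (x_1^2 - Dy_1^2) I$. Since $(x_1, y_1)$ is a solution to Pell's equation, $x_1^2 - Dy_1^2 = 1$, so $M^2 = 2x_1 M - I$. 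Applying this identity to the vector $\begin{pmatrix} x_{n-2} \\ y_{n-2} \end{pmatrix}$ immediately yields $\begin{pmatrix} x_n \\ y_n \end{pmatrix} = 2x_1 \begin{pmatrix} x_{n-1} \\ y_{n-1} \end{pmatrix} - \begin{pmatrix} x_{n-2} \\ y_{n-2} \end{pmatrix}$, which reading off coordinates gives (1) and (2).

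For (3), I would simply note that $z_n = \alpha x_n - \beta y_n$ is a fixed linear combination of the two coordinate sequences, and both $(x_n)$ and $(y_n)$ satisfy the same order-two linear recurrence $w_n = 2x_1 w_{n-1} - w_{n-2}$ with the same constant coefficients. Since the set of sequences satisfying a fixed linear recurrence is closed under linear combinations, $(z_n)$ satisfies it too:
\[
z_n = \alpha x_n - \beta y_n = \alpha(2x_1 x_{n-1} - x_{n-2}) - \beta(2x_1 y_{n-1} - y_{n-2}) = 2x_1 z_{n-1} - z_{n-2}.
\]
This is a one-line computation once (1) and (2) are in hand, and the hypotheses $\alpha, \beta \in \mathbb{Z}_{\geq 0}$ are not actually needed for this lemma — they will matter only later when $(z_n)$ is fed into an EDT0L construction requiring non-negative coefficients, so I would either keep them for consistency with downstream usage or remark that they are not used here.

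There is essentially no obstacle: the whole lemma is the elementary linear-algebra fact that a $2\times2$ matrix annihilated by $t^2 - 2x_1 t + 1$ forces its orbit vectors to obey the scalar recurrence $w_n = 2x_1 w_{n-1} - w_{n-2}$. The only points requiring a modicum of care are (a) verifying $\det M = x_1^2 - Dy_1^2 = 1$ explicitly from the Pell relation, and (b) confirming the recurrences are claimed only for $n \geq 2$, which is exactly the range where $x_{n-2}$ and $y_{n-2}$ are defined via \Cref{Pell_eqn_sequence_lem} (with $(x_0, y_0) = (1, 0)$). I would present it in that order: recall the matrix form, invoke Cayley--Hamilton with the determinant simplification, extract (1) and (2), then derive (3) by linearity.
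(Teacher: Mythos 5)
Your proof is correct. The substance is the same as the paper's — both arguments ultimately rest on the forward recursion of \cref{Pell_eqn_sequence_lem} together with the Pell relation \(x_1^2 - Dy_1^2 = 1\), and your part (3) is word-for-word the paper's (linearity of the recurrence); the hypotheses \(\alpha, \beta \in \mathbb{Z}_{\geq 0}\) are indeed unused here, exactly as you observe. The packaging differs: the paper proves (1) and (2) by induction on \(n\), checking the base case \(n = 2\) by hand (where the computation \(2x_1 x_1 - x_0 = x_1^2 + Dy_1^2 = x_2\) is precisely your determinant simplification in disguise) and then substituting the order-two recurrence into \(x_{k+1} = x_1 x_k + Dy_1 y_k\) for the inductive step, whereas you invoke Cayley--Hamilton for \(M = \left(\begin{smallmatrix} x_1 & Dy_1 \\ y_1 & x_1 \end{smallmatrix}\right)\) to get \(M^2 = 2x_1 M - I\) and apply it to \(\left(\begin{smallmatrix} x_{n-2} \\ y_{n-2} \end{smallmatrix}\right)\), which dispatches all \(n \geq 2\) in one stroke with no induction. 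Your route is a bit cleaner and makes the structural reason for the recurrence transparent (the characteristic polynomial of the Pell transfer matrix), while the paper's is more elementary and self-contained, using only the explicit recursion it has already stated; either is perfectly adequate for the downstream use of the lemma.
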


	\begin{proof}
		We will proceed by induction on \(n\) to show (1) and (2). First note that
		\[
			2 x_1 y_1 - y_0 = x_1 y_1 + y_1 x_1 = y_2.
		\]
		Additionally,
		\[
			2x_1 x_1 - x_0 = x_1^2 + (x_1^2 - 1) = x_1^2 + D y_1^2 = x_2.
		\]
		Thus (1) and (2) hold when \(n = 2\). Suppose the result holds when \(n = k\).
		Then
		\begin{align*}
			x_{k + 1} & = x_1 x_k + D y_1 y_k \\
			& = x_1 (2 x_1 x_{k - 1} - x_{k - 2}) + D y_1 (2 x_1 y_{k - 1} - y_{k -
			2}) \\
			& = 2 x_1 (x_1 x_{k - 1} + D y_1 y_{k - 1}) - (x_1 x_{k - 2} + D y_1 y_{k
			- 2}) \\
			& = 2 x_1 x_k - x_{k - 1}. \\
			y_{k + 1} & = y_1 x_k + x_1 y_k \\
			& = y_1 (2 x_1 x_{k - 1} - x_{k - 2}) + x_1 (2 x_1 y_{k - 1} - y_{k - 2})
			\\
			& = 2 x_1 (y_1 x_{k - 1} + x_1 y_{k - 1}) - (y_1 x_{k - 2} + x_1 y_{k -
			2}) \\
			& = 2 x_1 y_k - y_{k - 1}.
		\end{align*}
		It remains to show (3). We have, using (1) and (2),
		\begin{align*}
			z_n & = \alpha x_n - \beta y_n \\
			& = \alpha (2 x_1 x_{n - 1} - x_{n - 2}) - \beta (2 x_1 y_{n - 1} - y_{n -
			2}) \\
			& = 2 x_1 (\alpha x_{n - 1} - \beta y_{n - 1}) - (\alpha x_{n - 2} - \beta
			y_{n - 2}) \\
			& = 2 x_1 z_{n - 1} - z_{n - 2}. \qedhere
		\end{align*}
	\end{proof}

	Using Lemma \ref{Pell_seq_single_rec_relation_lem}, we can now prove some
	results about the sequence \((z_n)\) that show that it is indeed a type of
	sequence as mentioned by Lemma \ref{gen_fibonacci_EDT0L_lem}.

	\begin{lem}
		\label{alphax_nbeta_y_n_lem}
		Let \((x_n), \ (y_n) \subseteq \mathbb{Z}_{\geq 0}\) be sequences of
		solutions to Pell's equation \(X^2 - DY^2 = 1\). Let \(\alpha, \ \beta \in
		\mathbb{Z}_{\geq 0}\). Let \((z_n) \subseteq \mathbb{Z}\) be the sequence
		defined by \(z_n = \alpha x_n - \beta y_n\). Then
		\begin{enumerate}
			\item If \(N = \left\lceil \log_2 \frac{\alpha}{\beta} \right \rceil\),
			then \((z_n)_{n \geq N} \subseteq \mathbb{Z}_{\geq 0}\)
			or \((z_n)_{n \geq N} \subseteq \mathbb{Z}_{< 0}\);
			\item The sequence \((w_n)_{n \geq 1} \subseteq \mathbb{Z}\)
		  defined by \(w_n = z_n - z_{n - 1}\) satisfies for
			all \(n \in \mathbb{Z}_{\geq 2}\),
			\[
				z_n = (2 x_1 - 1) z_{n - 1} + w_{n - 1}, \qquad
				w_n = (2 x_1 - 2) z_{n - 1} + w_{n - 1};
			\]
			\item If \((z_n)_{n \geq N}\) is a sequence of non-negative integers then
			\((w_n)_{n \geq N}\) is, and if \((z_n)_{n \geq N}\) is a sequence of
			non-positive integers then \((w_n)_{n \geq N}\) is;
			\item The sequence \((w_n)_{n \geq N}\) is monotone;
			\item If \(\gamma \in \mathbb{Z}\) and \(M = \left \lceil \log_2
			\frac{(\gamma + 3)\alpha}{\beta} \right \rceil\), then \((z_n + \gamma)_{n
			\geq M}, \ (w_n + \gamma)_{n \geq M} \subseteq \mathbb{Z}_{\geq 0}\) or
			\((z_n + \gamma)_{n \geq M}, \ (w_n + \gamma)_{n \geq M} \subseteq
			\mathbb{Z}_{\leq 0}\).
		\end{enumerate}
	\end{lem}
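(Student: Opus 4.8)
The plan is to let \cref{Pell_seq_single_rec_relation_lem} do the heavy lifting: prove (2) first, since it is a two-line computation and is the mechanism behind everything else, then read off the sign statements (1) and (3) and the monotonicity statement (4) from it, and finally bootstrap (5). For (2), \cref{Pell_seq_single_rec_relation_lem}(3) gives \(z_n = 2x_1 z_{n-1} - z_{n-2}\) for \(n \ge 2\); substituting \(z_{n-2} = z_{n-1} - w_{n-1}\) yields \(z_n = (2x_1-1)z_{n-1} + w_{n-1}\), and subtracting \(z_{n-1}\) gives \(w_n = (2x_1-2)z_{n-1} + w_{n-1}\), which is the asserted system. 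The key observation to carry forward is that both coefficients \(2x_1-1\) and \(2x_1-2\) are non-negative --- indeed at least \(2\) --- because the fundamental solution has \(x_1^2 = Dy_1^2 + 1 \ge D+1 \ge 3\), so \(x_1 \ge 2\); this is exactly what will later let \((z_n)\) and \((w_n)\) be fed into \cref{gen_fibonacci_EDT0L_lem}.

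For (1), I would use that for \(n \ge 1\) the relation \(x_n^2 - Dy_n^2 = 1\) gives \(x_n/y_n = \sqrt{D + 1/y_n^2}\), which is strictly decreasing since the \(y_n\) strictly increase; hence \(\alpha(x_n/y_n) - \beta\) is strictly decreasing and \(z_n = y_n\big(\alpha(x_n/y_n) - \beta\big)\) changes sign at most once, from \(\ge 0\) to \(< 0\). Together with \(z_0 = \alpha \ge 0\) this forces the sign pattern \((+,\dots,+,-,-,\dots)\) (the case \(\alpha = 0\) being handled directly, as then \(z_n = -\beta y_n\)). To locate the switch, note \(z_n < 0 \iff \alpha^2 < (\beta^2 - D\alpha^2)y_n^2\), which is impossible when \(\beta^2 \le D\alpha^2\) and otherwise --- since \(\beta^2 - D\alpha^2\) is a positive integer, hence \(\ge 1\) --- holds as soon as \(y_n^2 > \alpha^2\); combined with the geometric bound \(y_n \ge x_1 y_{n-1} \ge 2y_{n-1}\), so \(y_n \ge 2^{\,n-1}\), this puts the switch index below a threshold of the stated logarithmic shape, and the precise \(N\) comes from tracking these inequalities (alternatively, one may argue from the closed form \(z_n = A\lambda^n + B\lambda^{-n}\), \(\lambda = x_1 + y_1\sqrt D\), bounding \(|B/A|\) via the fact that the norm \((z_1x_1 - z_0)^2 - Dz_1^2y_1^2\) is a non-zero integer and via \cref{pell_eqn_bound_lem}). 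Statement (3) is then immediate from (2): past the switch, \(z_{n-1}\) has a fixed sign, so \(w_n = (2x_1-2)z_{n-1} + w_{n-1}\) propagates it by induction once the base case is in place (at the switch, \(z_N < 0 \le z_{N-1}\) gives \(w_N < 0\); in the all-non-negative case one uses \(z_n \to +\infty\)).

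For (4), differencing (2) gives \(w_n - w_{n-1} = (2x_1-2)z_{n-1}\); for \(n-1 \ge N\) these increments all share the sign of \(z_{n-1}\), constant by (1), so \((w_n)_{n \ge N}\) is monotone. For (5), the point is that \(|z_n|\) and \(|w_n|\) grow geometrically (dominant term \(|A|\lambda^n\)), so adding the fixed integer \(\gamma\) cannot alter their signs once \(|z_n|, |w_n| > |\gamma|\): if \(\gamma\) has the eventual sign of \(z_n\) (equivalently of \(w_n\), by (3)), or is zero, then \(z_n + \gamma\) and \(w_n + \gamma\) already carry that sign for all \(n \ge \max(N,M)\); if \(\gamma\) has the opposite sign, one re-runs the estimate from (1) with \(|\gamma|\) in place of the slack ``\(\ge 1\)'', which is where the passage from \(3\) to \(\gamma+3\) in the bound originates. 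Thus (5) follows by combining (1)--(4) with one further geometric-growth estimate, and \(M\) is read off exactly as \(N\) was.

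The main obstacle is entirely quantitative. The qualitative content --- at most one sign change, forward-invariance of the ``sign cones'' \(\{(z,w): z,w \ge 0\}\) and \(\{(z,w): z,w \le 0\}\) under the linear recursion in (2), and monotonicity of \((w_n)\) --- falls out of \cref{Pell_seq_single_rec_relation_lem} with essentially no work, but turning it into the explicit \(\lceil \log_2(\cdot)\rceil\) bounds needed for the later space-complexity claims requires the careful interplay of (a) a geometric lower bound on \(y_n\) (or on \(|z_n|\)), (b) the integrality and non-vanishing of the relevant norm form, giving a clean \(\ge 1\) gap, and (c) \cref{pell_eqn_bound_lem} to keep \(\lambda\) under control. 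Everything past that is bookkeeping.
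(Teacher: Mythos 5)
Your overall plan is close to the paper's: (2) is proved exactly as the paper does, and your route to (1) — squaring \(\alpha x_n < \beta y_n\) and using that \(\beta^2 - D\alpha^2\) is an integer, together with \(y_n \ge 2^{n-1}\) — is a clean variant of the paper's trick of multiplying \(z_n\) by the conjugate \(\gamma x_n + \alpha\sqrt{D}y_n\). But the step you defer (``the precise \(N\) comes from tracking these inequalities'') is precisely where the content is, and it does not go through: your inequalities give a switch threshold of the shape \(\lceil\log_2\alpha\rceil + 2\), not the stated \(N = \lceil\log_2\frac{\alpha}{\beta}\rceil\), and no amount of tracking will shrink it to that value — for \(D = 2\), \(\alpha = 5\), \(\beta = 8\) the stated \(N\) is \(0\), while \(z_0 = 5 > 0 > z_1 = -1\). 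So you must either prove a corrected (larger, still logarithmic) threshold and carry it through (3)–(5) and the later applications — which is harmless for the space-complexity claims, since only the logarithm of the threshold enters \cref{x_N_bound_lem} and \cref{Pell_multiple_EDT0L_lem} — or produce a genuinely different argument for the exact constant; asserting it will fall out of the same inequalities is a gap.

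The second genuine gap is your deduction of (3) from (2) in the non-negative branch. From \(w_n = (2x_1 - 2)z_{n-1} + w_{n-1}\) and \(z_{n-1} \ge 0\) you only get that \((w_n)\) is non-decreasing past the threshold; ``\(z_n \to +\infty\)'' then gives \(w_n \ge 0\) eventually, not for all \(n \ge N\), and there is no base case to start the induction. This is not a removable technicality: for \(D = 2\), \(\alpha = 5\), \(\beta = 7\) one has \(\beta^2 \le D\alpha^2\), so every \(z_n \ge 0\), yet \(w_1 = z_1 - z_0 = 1 - 5 = -4 < 0\), so sign propagation from (2) cannot begin at the stated threshold. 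The paper does not derive (3) from (2) at all; it runs a separate direct computation on the differences \(x_n - x_{n-1}\), \(y_n - y_{n-1}\) (the conjugate/squaring argument applied to \(w_n\) itself), and you need an argument of that kind, or a quantitative lower bound showing \(w\) is already non-negative at whatever threshold you actually establish in (1). Your negative branch of (3), the monotonicity argument (4), and the sketch of (5) are fine once a correct threshold and base case are in place, but (5) inherits the same caveat: its index \(M\) must be computed from the threshold you can certify, not from the one stated.
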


	\begin{proof}
		We start by showing (1). Let \(\gamma = \frac{\beta}{\sqrt{D}}\). Then, if
		\(n \in \mathbb{Z}_{\geq 0}\),
		\begin{align*}
			z_n = \alpha x_n - \beta y_n = \alpha x_n - \gamma \sqrt{D} y_n.
		\end{align*}
		We have that \(z_n \geq 0\) if and only if \(z_n(\gamma x_n + \alpha \sqrt{D}
		y_n) \geq 0\). Note that
		\begin{align*}
			z_n (\gamma x_n + \alpha \sqrt{D} y_n)
			& = (\alpha x_n - \gamma \sqrt{D} y_n) (\gamma x_n + \alpha \sqrt{D} y_n)
			\\
			& = \alpha \gamma x_n^2 + \alpha^2 \sqrt{D} x_n y_n - \gamma^2 \sqrt{D}
			x_n y_n - \alpha \gamma Dy_n^2 \\
			& = \alpha \gamma (x_n^2 - D y_n^2) + \sqrt{D} x_n y_n (\alpha^2 -
			\gamma^2) \\
			& = \alpha \gamma + \sqrt{D} x_n y_n (\alpha^2 - \gamma^2).
		\end{align*}
		If \(\alpha \geq \gamma\), the above expression must be at least \(0\), so
		\(z_n \geq 0\) for all \(n \in \mathbb{Z}_{\geq 0}\), and there is nothing
		to prove. Otherwise, suppose \(\gamma > \alpha\), and write \(\gamma =
		\alpha + \delta\) for some \(\delta > 0\). Then
		\begin{align*}
			z_n (\gamma x_n + \alpha \sqrt{D} y_n)
			& = \alpha \gamma + \sqrt{D} x_n y_n (\alpha^2 - \gamma^2) \\
			& = \alpha (\alpha + \delta) + \sqrt{D} x_n y_n (\alpha^2 - (\alpha +
			\delta)^2) \\
			& = \alpha^2 + \alpha \delta - \sqrt{D} x_n y_n (\delta^2 + 2 \alpha
			\delta).
		\end{align*}
		It follows that \(z_n < 0\) if and only if \(\alpha^2 + \alpha \delta
		- \sqrt{D} x_n y_n (\delta^2 + 2 \alpha \delta) < 0\). That is,
		\[
			x_n y_n > \frac{\alpha^2 + \alpha \delta}{\sqrt{D} (\delta^2 + 2 \alpha
			\delta)}.
		\]
		Noting that \(x_n\) and \(y_n\) are both strictly increasing, and if \(n
		\geq 1\), \(x_n y_n > 1\), it suffices to find \(N \in \mathbb{Z}_{>0}\)
		such that if \(n = N\) the above inequality holds. By
		\cref{Pell_eqn_sequence_lem}, we have that \(x_n \geq x_1 x_{n - 1}\) and
		\(y_n \geq x_1 y_{n - 1}\), and so \(x_n y_n \geq x_1^{2n - 1} y_1\). Noting
		that \(x_1 \geq 2\) and \(y_1 \geq 1\), it follows that \(x_n y_n \geq
		2^n\). Note that \(\frac{\alpha}{\beta} = \frac{\alpha \gamma}{\sqrt{D}} =
		\frac{\alpha^2 + \alpha \delta}{\sqrt{D}} \geq \frac{\alpha^2 + \alpha
		\delta}{\sqrt{D}( \delta^2 + 2 \alpha \delta)}\), and so choosing \(N =
		\left\lceil \log_2 \frac{\alpha}{\beta} \right \rceil\) will satisfy the
		stated conditions.

		For (2), let \(n \in \mathbb{Z}_{\geq 2}\). Then, using
		\cref{Pell_seq_single_rec_relation_lem},
		\begin{align*}
			z_n & = 2 x_1 z_{n - 1} - z_{n - 2} = (2 x_1 - 1) z_{n - 1} + w_{n - 1}.
			\\
			w_n & = z_n - z_{n - 1} = 2 x_1 z_{n - 1} - z_{n - 2} - z_{n - 1}
			= (2 x_1 - 2) z_{n - 1} + w_{n - 1}.
		\end{align*}

		We now show (3). As with our proof of (1), let \(\gamma =
		\frac{\beta}{\sqrt{D}}\). Then, for all \(n \in \mathbb{Z}_{> 0}\),
		\[
			w_n = z_n - z_{n - 1} = \alpha x_n - \gamma \sqrt{D} y_n - \alpha x_{n - 1}
			+ \gamma \sqrt{D} y_{n - 1} = \alpha (x_n - x_{n - 1}) - \gamma \sqrt{D}
			(y_n - y_{n - 1}).
		\]

		Since \(x_n\) and \(y_n\) are both strictly increasing, \(w_n \geq 0\) if
		and only if \(w_n (\gamma (x_n - x_{n - 1}) + \alpha \sqrt{D} (y_n - y_{n -
		1})) \geq 0\). Let \(u_n = w_n (\gamma (x_n - x_{n - 1}) + \alpha \sqrt{D}
		(y_n - y_{n - 1}))\). Write \(v_n = (\alpha^2 - \gamma^2) \sqrt{D} (x_n -
		x_{n - 1}) (y_n - y_{n - 1})\). We have
		\begin{align*}
			u_n	& = (\alpha (x_n - x_{n - 1}) - \gamma \sqrt{D} (y_n - y_{n - 1}))
			(\gamma (x_n - x_{n - 1}) + \alpha \sqrt{D} (y_n - y_{n - 1})) \\
			& = \alpha \gamma ((x_n - x_{n - 1})^2 - D(y_n - y_{n - 1})^2) + (\alpha^2
			- \gamma^2) \sqrt{D} (x_n - x_{n - 1}) (y_n - y_{n - 1}) \\
			& = \alpha \gamma (x_n^2 - 2x_n x_{n - 1} + x_{n - 1}^2 - D y_n^2 + 2D y_n
			y_{n - 1} - D y_{n - 1}^2) + v_n \\
			& = \alpha \gamma ((x_n^2 - D y_n^2) + (x_{n - 1}^2 - D y_{n - 1}^2) + 2D
			y_n y_{n - 1} - 2x_n x_{n - 1}) + v_n \\
			& = 2 \alpha \gamma (1 + D y_n y_{n - 1} - x_n x_{n - 1}) + v_n \\
			& = 2 \alpha \gamma (1 + D (y_1 x_{n - 1} + x_1 y_{n - 1}) y_{n - 1} -
			(x_1 x_{n - 1} +  D y_1 y_{n - 1}) x_{n - 1}) + v_n \\
			& = 2 \alpha \gamma (1 + D y_1 x_{n - 1} y_{n - 1} + Dx_1 y_{n - 1}^2 -
			x_1 x_{n - 1}^2 -  D y_1 y_{n - 1} x_{n - 1}) + v_n \\
			& = 2 \alpha \gamma (1 + Dx_1 y_{n - 1}^2 - x_1 x_{n - 1}^2) + v_n \\
			& = 2 \alpha \gamma (1 - x_1(x_{n - 1}^2 - D y_{n - 1}^2)) + v_n \\
			& = 2 \alpha \gamma (1 - 1) + v_n \\
			& = (\alpha^2 - \gamma^2) \sqrt{D} (x_n - x_{n - 1}) (y_n - y_{n - 1}).
		\end{align*}
		Note that \((\alpha^2 - \gamma^2) \sqrt{D} (x_n - x_{n -
		1}) (y_n - y_{n - 1}) \geq 0\) if and only if \(\alpha \geq \gamma\); that
		is \(\frac{\alpha}{\beta} \sqrt{D} \geq 1\). As we saw in the proof of part
		(1), \(\frac{\alpha}{\beta} \sqrt{D} \geq 1\) implies \((z_n)_{n \geq N}\)
		is a sequence of non-negative integers, and \(\frac{\alpha}{\beta} \sqrt{D}
		< 1\) implies \((z_n)_{n \geq N}\) is a sequence of non-positive integers,
		as required.

		For (4), we show \((w_n)_{n \geq N}\) is monotone. First note that
		\((w_n)_{n \geq N}\) and \((z_n)_{n \geq N}\) are both sequences of
		non-negative integers or sequences of non-positive integers. In addition,
		\(w_n = w_{n - 1} + 2x_1 z_{n - 1}\) for all \(n \in \mathbb{Z}_{> 0}\). So
		if \(n \in \mathbb{Z}_{\geq N}\), then \(|w_n| = |w_{n - 1}| + |2x_1 z_{n -
		1}| \geq |w_{n - 1}|\). As \((w_n)_{n \geq N}\) is a sequence of
		non-negative integers or a sequence of non-positive integers, it must be
		monotone.

		We finally consider (5). It suffices to show that \(|z_M| > |\gamma|\) and
		\(|w_M| \geq |\gamma|\), then together with the fact that \(M \geq N\), and
		using the fact that \((z_n)_{n \geq N}\) is monotone by (3), and \(w_n\) is
		monotone by (4), we have that \((z_n)_{n \geq M}\) and \((w_n)_{n \geq M}\)
		are both sequences of non-positive or non-negative integers. We know that
		\(|z_{n + N}| > 2^{n - 1}\) and \(w_n \geq 2^{n - 2}\) using (2), together
		with the fact that \(x_1 > 1\), and so \(2 x_1 - 1 > 2\), so taking any \(M
		\geq N + \log_2(|\gamma| + 2)\) suffices. As \(N = \left
		\lceil \log_2 \frac{\alpha}{\beta} \right \rceil\), taking \(M =
		\left \lceil \log_2 \frac{(|\gamma| + 3)\alpha}{\beta} \right
		\rceil\), as per the statement of the lemma, satisfies the desired
		condition.
	\end{proof}

	Before we apply Lemma \ref{gen_fibonacci_EDT0L_lem} to show that some of
	these solution languages are EDT0L, we need to add constants to the
	differences of multiples of solutions.

	\begin{lem}
		\label{alphax_nbeta_y_n_gamma_lem}
		Let \((x_n), \ (y_n) \subseteq \mathbb{Z}_{\geq 0}\) be sequences of
		solutions to Pell's equation \(X^2 - DY^2 = 1\). Let \(\alpha, \ \beta \in
		\mathbb{Z}_{\geq 0}\) and \(\gamma \in \mathbb{Z}\). Let \((z_n), \ (t_n)
 		\subseteq \mathbb{Z}\) be sequences defined by \(z_n = \alpha x_n - \beta
		y_n\) and \(t_n = z_n + \gamma\). Then
		\begin{enumerate}
			\item The sequence \((s_n)_{n \geq 1} \subseteq \mathbb{Z}\)
			defined by \(s_n = z_n - z_{n - 1} + \gamma\) satisfies for
			all \(n \in \mathbb{Z}_{\geq 2}\),
			\[
				t_n = (2 x_1 - 1) z_{n - 1} + s_{n - 1}, \qquad
				s_n = (2 x_1 - 2) z_{n - 1} + s_{n - 1};
			\]
			\item If \(\gamma \in \mathbb{Z}\) and \(M = \left \lceil \log_2
			\frac{(\gamma + 2)\alpha}{\beta} \right \rceil\), then \((t_n)_{n
			\geq M} \subseteq \mathbb{Z}_{\geq 0}\) or \((t_n)_{n \geq M}
			\subseteq \mathbb{Z}_{\leq 0}\);
			\item If \((t_n)_{n \geq M}\) is a sequence of non-negative integers then
			\((s_n)_{n \geq M}\) and \((z_n)_{n \geq M}\) are, and if \((t_n)_{n \geq
			M}\) is a sequence of non-positive integers then \((s_n)_{n \geq M}\) and
			\((z_n)_{n \geq M}\) are;
		\end{enumerate}
	\end{lem}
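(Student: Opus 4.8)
The plan is to derive all three parts from Lemma \ref{alphax_nbeta_y_n_lem} by recognising $s_n$ as $w_n + \gamma$ in the notation of that lemma, where $w_n = z_n - z_{n-1}$. Part (1) is then pure substitution: Lemma \ref{alphax_nbeta_y_n_lem}(2) gives $z_n = (2x_1 - 1)z_{n-1} + w_{n-1}$ and $w_n = (2x_1 - 2)z_{n-1} + w_{n-1}$ for $n \geq 2$, so adding $\gamma$ to each equation and using $t_n = z_n + \gamma$ and $s_{n-1} = w_{n-1} + \gamma$ yields $t_n = (2x_1 - 1)z_{n-1} + s_{n-1}$ and $s_n = (2x_1 - 2)z_{n-1} + s_{n-1}$. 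There is no obstacle here.

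For part (2), observe that $(t_n) = (z_n + \gamma)$ is exactly the sequence treated in Lemma \ref{alphax_nbeta_y_n_lem}(5), so the assertion is the ``$z_n + \gamma$'' half of that statement. One can either quote it or rerun its short argument: $(z_n)_{n \geq N}$ has a constant sign by Lemma \ref{alphax_nbeta_y_n_lem}(1), and the bound $|z_{n+N}| > 2^{n-1}$ from the proof of Lemma \ref{alphax_nbeta_y_n_lem}(5) makes $|z_n| > |\gamma|$ once $n \geq M$, whence $\sgn(t_n) = \sgn(z_n)$ there. The degenerate case where $(z_n)$ is eventually zero (i.e.\ $\alpha = \beta = 0$, so $t_n \equiv \gamma$) is immediate.

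For part (3), assume $(t_n)_{n \geq M} \subseteq \mathbb{Z}_{\geq 0}$, the non-positive case being symmetric. Were $(z_n)_{n \geq N}$ eventually strictly negative, then $|z_n| \to \infty$ (using $w_n = w_{n-1} + (2x_1 - 2)z_{n-1}$ together with the sign and monotonicity statements of Lemma \ref{alphax_nbeta_y_n_lem}(3)--(4)), so $t_n = z_n + \gamma \to -\infty$, contradicting the hypothesis; hence $(z_n)_{n \geq N}$, and in particular $(z_n)_{n \geq M}$, lies in $\mathbb{Z}_{\geq 0}$. By Lemma \ref{alphax_nbeta_y_n_lem}(3) the sequence $(w_n)_{n \geq N}$ then also lies in $\mathbb{Z}_{\geq 0}$, and by Lemma \ref{alphax_nbeta_y_n_lem}(4) together with the recurrence above it is non-decreasing with $|w_n| \to \infty$; since $M$ is chosen large enough that $w_n \geq |\gamma|$ for all $n \geq M$, we obtain $s_n = w_n + \gamma \geq 0$ there, which is the claim.

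The only point demanding genuine care is the bookkeeping of the threshold $M$: one must confirm that the value in the statement is at once $\geq N$ and large enough that the $2^n$-type lower bounds from the proof of Lemma \ref{alphax_nbeta_y_n_lem}(5) push $|z_M|$ and $|w_M|$ past $|\gamma|$, with the boundary cases $\alpha = 0$ or $\beta = 0$ (where $(z_n)$ already has constant sign unconditionally) disposed of directly. Everything else is substitution and sign-chasing.
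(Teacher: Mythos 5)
Your proposal is correct and follows essentially the same route as the paper: part (1) by substituting $s_n = w_n + \gamma$ into the recurrences of Lemma \ref{alphax_nbeta_y_n_lem}(2), and parts (2) and (3) as consequences of Lemma \ref{alphax_nbeta_y_n_lem}(5), which is exactly how the paper argues (its proof simply states that (2) and (3) follow from that part). Your additional sign-chasing detail and attention to the threshold constant and the degenerate cases $\alpha = 0$ or $\beta = 0$ just make explicit what the paper leaves implicit.
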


	\begin{proof}
		We start with (1). Let \(w_n = z_n - z_{n - 1}\), for all \(n \in
		\mathbb{Z}_{> 0}\). If \(n \in \mathbb{Z}_{> 0}\), then using
		\cref{alphax_nbeta_y_n_lem}
		\begin{align*}
			t_n & = z_n + \gamma \\
			& = (2 x_1 - 1) z_{n - 1} + w_{n - 1} + \gamma \\
			& = (2 x_1 - 1) z_{n - 1} + s_{n - 1}, \\
			s_n & = w_n + \gamma \\
			& = (2 x_1 - 2) z_{n - 1} + w_{n - 1} + \gamma \\
			& = (2 x_1 - 2) z_{n - 1} + s_{n - 1}.
		\end{align*}
		Parts (2) and (3) follow by \cref{alphax_nbeta_y_n_lem} (5).
	\end{proof}

	To allow us to show space complexity properties, we need bounds of many of
	the integers we have introduced.

	\begin{lem}
		\label{x_N_bound_lem}
		Let \(S\) be the set of all  non-negative solutions (as ordered pairs) to
		Pell's equation \(X^2 - DY^2 = 1\). Let \(\alpha, \ \beta \in \mathbb{Z}_{>
		0}\) and \(\gamma \in \mathbb{Z}\), and \(M = \max \left(2, \ \left \lceil
		\log_2 \frac{(|\gamma| + 3)\alpha}{\beta} \right \rceil\right)\). Let \(z_n
		= \alpha x_n - \beta y_n\) and \(t_n = z_n + \gamma\) for all \(n \in
		\mathbb{Z}_{\geq 0}\). Let \(w_n = z_n - z_{n - 1}\) and \(s_n = w_n +
		\gamma\) for all \(n \in \mathbb{Z}_{> 0}\).

		Then there is a function \(f\) that is logarithmic in
		\(\alpha\), \(\beta\) and \(|\gamma|\), and a function \(g\) that is linear
		in \(D\), such that \(\log(x_M)\), \(\log(y_M)\), \(\log|z_M|\),
		\(\log|w_M|\), \(\log|t_M|\) and \(\log|s_M|\) are all bounded by \(fg\).
	\end{lem}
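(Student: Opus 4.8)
The plan is to bound all six quantities in terms of $x_M$, and to control $x_M$ by iterating the Pell recurrence $M$ times, using the bound on the fundamental solution supplied by \cref{pell_eqn_bound_lem}.

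First I would observe that the recurrence in \cref{Pell_eqn_sequence_lem} is equivalent to the identity $x_n + y_n\sqrt{D} = (x_1 + y_1\sqrt{D})^n$, which follows by induction from $x_0 + y_0\sqrt{D} = 1$ and the computation $(x_1 x_{n-1} + D y_1 y_{n-1}) + (y_1 x_{n-1} + x_1 y_{n-1})\sqrt{D} = (x_1 + y_1\sqrt{D})(x_{n-1} + y_{n-1}\sqrt{D})$. Since $D \geq 2$, each of $x_n$ and $y_n$ is at most $x_n + y_n\sqrt{D} = (x_1 + y_1\sqrt{D})^n$, so $\log x_n$ and $\log y_n$ are both at most $n\log(x_1 + y_1\sqrt{D}) < n\sqrt{D}(\log(4D) + 2)$ by \cref{pell_eqn_bound_lem}. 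Taking $n = M$ yields bounds on $\log x_M$ and $\log y_M$. (Alternatively one can iterate the recurrence $x_n = 2x_1 x_{n-1} - x_{n-2} \leq 2x_1 x_{n-1}$ from \cref{Pell_seq_single_rec_relation_lem} to get $x_M \leq (2x_1)^M$ and the same kind of bound.)

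Next I would bound $M$ itself: since $\beta \geq 1$,
\[
	M = \max\!\left(2,\ \left\lceil \log_2 \tfrac{(|\gamma| + 3)\alpha}{\beta}\right\rceil\right) \leq 2 + \log_2 \alpha + \log_2(|\gamma| + 3),
\]
which is a logarithmic function of $\alpha$ and $|\gamma|$. Combining this with the previous step, $\log x_M$ and $\log y_M$ are bounded by $\bigl(2 + \log_2\alpha + \log_2(|\gamma|+3)\bigr)\cdot\sqrt{D}(\log(4D)+2)$; since $\sqrt{D}(\log(4D)+2)$ is bounded above by a linear function of $D$ (for $D \geq 2$ the ratio $\sqrt{D}(\log(4D)+2)/D$ is maximised near $D = 2$), this already has the required form $fg$ with $f$ logarithmic in $\alpha,\beta,|\gamma|$ and $g$ linear in $D$.

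It remains to propagate to $z_M, w_M, t_M, s_M$ by crude estimates. As $D \geq 2$ forces $y_n \leq x_n$, we have $|z_M| = |\alpha x_M - \beta y_M| \leq (\alpha+\beta)x_M$; since $(x_n)$ is increasing, $|w_M| \leq |z_M| + |z_{M-1}| \leq 2(\alpha+\beta)x_M$; and $|t_M| \leq |z_M| + |\gamma|$, $|s_M| \leq |w_M| + |\gamma|$. Taking logarithms, each quantity is bounded by $\log x_M$ plus one of $\log(\alpha+\beta)$, $\log 2$ or $\log(|\gamma|+1)$, each of which is itself logarithmic in $\alpha,\beta,|\gamma|$, so enlarging $f$ and $g$ by absolute constant factors gives a single product $fg$ of the stated shape that dominates all six logarithms. (If one of $z_M, w_M, t_M, s_M$ vanishes, interpret its logarithm as $0$, which is harmless for the intended space-complexity application.) There is no serious obstacle here once the exponential bound on $x_n$ is in place; the one point needing attention is keeping the $\alpha,\beta,\gamma$-dependence, which lives inside $M$ and is therefore logarithmic, cleanly separated from the $D$-dependence, which lives inside $\sqrt{D}(\log(4D)+2)$ and is therefore linear, so that the final bound genuinely factors as $f\cdot g$ rather than as a single function of all four parameters.
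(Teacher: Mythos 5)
Your proposal is correct and follows essentially the same route as the paper: bound $\log x_M$ by $M$ times the Lenstra-type bound on the fundamental solution from \cref{pell_eqn_bound_lem} (the paper iterates $x_n \leq 2x_1x_{n-1}$ from \cref{Pell_seq_single_rec_relation_lem} to get $x_M \leq (2x_1)^M$, which is the alternative you mention), observe that $M$ is logarithmic in $\alpha$ and $|\gamma|$ while the $D$-dependence is absorbed into a linear function such as $2+3D$, and then propagate to $z_M, w_M, t_M, s_M$ by crude estimates using $y_M \leq x_M$ and monotonicity of $(x_n)$. Your propagation step via $|z_M| \leq (\alpha+\beta)x_M$ is in fact slightly cleaner than the paper's product-type bounds, but the decomposition into $f$ (logarithmic, coming from $M$) times $g$ (linear in $D$, coming from $\log x_1$) is the same.
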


	\begin{proof}
		\cref{Pell_seq_single_rec_relation_lem}, together with the fact that
		\((x_n)\) is strictly increasing, implies that \(x_n \leq (2x_1)^n\) for all
		\(n \geq 1\). Thus \(x_M \leq (2x_1)^M = (2 x_1)^{\left\lceil \log_2
		\frac{(|\gamma| + 3)\alpha}{\beta} \right \rceil} = \left \lceil
		\frac{(|\gamma| + 2)\alpha}{\beta} \right \rceil x_1^{\left\lceil \log_2
		\frac{(|\gamma| + 2)\alpha}{\beta} \right \rceil}\). Using
		\cref{pell_eqn_bound_lem}, we have that
		\begin{align*}
			\log(x_M)
			& \leq \log\left\lceil\frac{(|\gamma| + 3)\alpha}{\beta} \right \rceil +
			\left\lceil \log_2 \frac{(|\gamma| + 3)\alpha}{\beta} \right \rceil \log(x_1) \\
			& \leq \log(\alpha + 1) + \log(|\gamma| + 4) + \log(\beta + 1) +  (\log(\alpha + 1) +
			\log(|\gamma| + 4) + \log(\beta + 1))\sqrt{D} (\log 4D + 2) \\
			& \leq (\log(\alpha + 1) + \log(|\gamma| + 4) + \log(\beta + 1))(2 + 3D).
		\end{align*}
		Since \(y_M < x_M\), we have that \(\log(y_M)\) is also bounded by
		\((\log(\alpha + 1)  + \log(|\gamma| + 4) + \log(\beta + 1))(2 + 3D)\).

		Let \(n \geq 1\). Then, using the fact that \(\alpha x_n\) and \(\beta (y_n
		+ 1)\) are both at least \(1\), we have
		\begin{align*}
			\log|z_n| & = \log|\alpha x_n - \beta y_n| \\
			& = \log(\alpha) + \log(x_n) + \log(\beta) + \log(y_n + 1) \\
			& \leq \log(\alpha) + \log(x_n) + \log(\beta) + \log(y_n) + 1.
		\end{align*}
		Using the fact that \(x_M\) and \(y_M\) are bounded by \((\log(\alpha + 1)
		+ \log(|\gamma| + 4) + \log(\beta + 1))(2 + 3D)\), we now have that \(z_M
		\leq 2(\log(\alpha + 1)  + \log(|\gamma| + 4) + \log(\beta + 1))(2 + 3D) +
		\log(\alpha) + \log(\beta) + 1\).

		We have that \(w_M = z_M - z_{M - 1}\). Noting that \(M - 1 \geq 1\),
		\(x_{M - 1} \leq x_M\) and \(y_{M - 1} \leq y_M\), it follows that
		\begin{align*}
			\log|w_M| & = \log|z_M - z_{M - 1}|\\
			& \leq \log|z_M| + \log|z_{M - 1}| \\
			& \leq 2 \log(\alpha) + 2 \log(\beta) + 4 (\log(\alpha + 1)  +
			\log(|\gamma| + 4) + \log(\beta + 1))(2 + 3D) (2 + 3D) + 2.
		\end{align*}
		Since \(t_M = z_M + \gamma\) and \(s_M = w_M + \gamma\), we have
		that \(t_M\) and \(s_M\) are bounded by the same expressions as \(z_M\)
		and \(w_M\) if \(\gamma = 0\). Otherwise,
		\begin{align*}
			\log|t_M| & \leq \log|z_M| + \log|\gamma| \\
			& \leq 2(\log(\alpha + 1)  + \log(|\gamma| + 4) + \log(\beta + 1))(2 + 3D)
			+ \log(\alpha) + \log(\beta) + 1 + \log(|\gamma|),\\
			\log|s_M| & \leq \log|w_M| + \log|\gamma| \\
			& \leq \log(\alpha) + 2 \log(\beta) + 4 (\log(\alpha + 1)  +
			\log(|\gamma| + 4) + \log(\beta + 1))(2 + 3D) + 2 + \log|\gamma|.
		\end{align*}
		Taking \(f = 4 (\log(\alpha + 1)  + \log(|\gamma| + 4) + \log(\beta + 1)) +
		2\) and \(g = 2 + 3D\), the result follows.
	\end{proof}

	We have now completed the set up to show that the solution language to
	Pell's equation is always EDT0L. More than that, we can show that applying
	linear functions to the variables will still give this outcome. We need the
	bounds on the size of our extended alphabet and images of endomorphisms so
	that we can apply Lemma \ref{EDT0L_division_lem} later on.

	\begin{lem}
		\label{Pell_multiple_EDT0L_lem}
		Let \(S\) be the set of all  non-negative solutions (as tuples) to Pell's
		equation \(X^2 - DY^2 = 1\), and \(\alpha, \ \beta, \ \gamma, \ \delta, \
		\epsilon, \ \zeta \in \mathbb{Z}\). Then
		\begin{enumerate}
			\item The language \(L = \{a^{\alpha x + \beta y + \gamma} \# b^{\delta x
			+ \epsilon y + \zeta} \mid (x, \ y) \in S\}\) is EDT0L;
			\item A \(\#\)-separated EDT0L system \(\mathcal H\) for \(L\) is
			constructible in \(\ns(fg)\), where \(f\) is logarithmic in
			\(\max(|\alpha|, \ |\beta|, \ |\gamma|, \ |\delta|, \ |\epsilon|, \
			|\zeta|)\), and \(g\) is linear in \(D\);
			\item The system \(\mathcal H\) is \(h_1 h_2\)-bounded, where \(h_1\) is
			linear in \(\max(|\alpha|, \ |\beta|, \ |\gamma|, \ |\delta|, \
			|\epsilon|, \ |\zeta|)\), and \(h_2\) is exponential in \(D\).
		\end{enumerate}
	\end{lem}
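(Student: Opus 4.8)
The plan is to write $L$ as a finite union of a \emph{finite part} $L_{<M}=\{a^{P_n}\# b^{Q_n}\mid 0\le n<M\}$ and a \emph{tail} $L_{\ge M}=\{a^{P_n}\# b^{Q_n}\mid n\ge M\}$, where $P_n=\alpha x_n+\beta y_n+\gamma$, $Q_n=\delta x_n+\epsilon y_n+\zeta$, $(x_n,y_n)$ is the $n$-th non-negative solution of $X^2-DY^2=1$, and $M$ is a threshold, logarithmic in the coefficients, large enough for the eventual-sign conclusions of \cref{alphax_nbeta_y_n_lem,alphax_nbeta_y_n_gamma_lem} to hold for both components at once. Since each piece will be a $\#$-separated EDT0L language, \cref{hash_sep_union_lem} assembles them and carries the space and boundedness properties through. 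The finite part is immediate: it is a finite language with at most $M$ words, each of length bounded using \cref{x_N_bound_lem,pell_eqn_bound_lem}; a $\#$-separated system for it with the required properties is written down directly, generating the (potentially long) unary blocks $a^{P_n}$, $b^{Q_n}$ by a chain of doubling endomorphisms inside the rational control rather than as a single large endomorphism image.

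The work is in the tail, and I would handle the two components separately and then glue. Fix the $a$-component $\alpha x_n+\beta y_n+\gamma$. Composing the final output map with $a\mapsto a^{-1}$ when needed, and using that $\alpha x_n-\beta y_n$ is, up to sign, the only ``opposite-sign'' form (its negative being $-\alpha x_n+\beta y_n$), one reduces to: (i) both coefficients of $x_n,y_n$ non-negative; (ii) the form $\alpha x_n-\beta y_n$ with $\alpha,\beta\ge 0$; the degenerate cases $\alpha=0$ or $\beta=0$ being handled directly. In case (i), $\hat P_n=\alpha x_n+\beta y_n\ge 0$ satisfies $\hat P_n=2x_1\hat P_{n-1}-\hat P_{n-2}$ by \cref{Pell_seq_single_rec_relation_lem}, so the pair $(\hat P_n,\hat P_n-\hat P_{n-1})$ obeys a homogeneous two-term recurrence with non-negative integer coefficients ($2x_1-1,\,2x_1-2\ge 0$ since $x_1\ge 2$) and non-negative entries, which is of the shape of \cref{gen_fibonacci_EDT0L_lem}. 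In case (ii), \cref{alphax_nbeta_y_n_lem,alphax_nbeta_y_n_gamma_lem} give that, for $n\ge M$, the quantities $z_n=\alpha x_n-\beta y_n$, $w_n=z_n-z_{n-1}$, $s_n=w_n+\gamma$ are all of one fixed sign, and the triple $(z_n,w_n,s_n)$ obeys the homogeneous non-negative recurrence $z_n=(2x_1-1)z_{n-1}+w_{n-1}$, $w_n=(2x_1-2)z_{n-1}+w_{n-1}$, $s_n=(2x_1-2)z_{n-1}+s_{n-1}$, again of the shape of \cref{gen_fibonacci_EDT0L_lem}. Shifting the index so that the initial data sits at $n=M$ (where sign-definiteness holds) and feeding these recurrences into \cref{gen_fibonacci_EDT0L_lem} produces, for each component, an EDT0L system whose rational control has the rigid form $\theta\varphi^\ast\psi$.

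The one genuinely delicate point --- and the step I expect to be the main obstacle to keeping the construction clean --- is the additive constant $\gamma$ (and $\zeta$). If $\gamma$ has the same sign as the eventual sign of $\alpha x_n-\beta y_n$ (in case (i), the sign of $\hat P_n\ge 0$), then $a^{\alpha x_n-\beta y_n+\gamma}=a^{\alpha x_n-\beta y_n}a^{\gamma}$ with no cancellation, so it suffices to build a system for $\{a^{z_n}\}$ --- a purely two-term homogeneous recurrence, no constant track needed --- and concatenate the fixed block $a^{\gamma}$. If $\gamma$ has the opposite sign, then $\gamma(2-2x_1)$ has the same sign as the eventual sign of the component, so one homogenises the inhomogeneous recurrence $P_n=2x_1P_{n-1}-P_{n-2}+\gamma(2-2x_1)$ by taking $\gamma(2-2x_1)$ as the constant third coordinate of the triple, keeping all recurrence coefficients and all initial entries of one sign; this is exactly why \cref{gen_fibonacci_EDT0L_lem} is stated with three sequences, constant-term-free recurrences, and a constant track. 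The same discussion applies verbatim to the $b$-component with $(\delta,\epsilon,\zeta)$. Because the only unbounded part of each component's rational control is a Kleene star over a \emph{single} endomorphism, the two component systems merge into one $\#$-separated system for $L_{\ge M}$: take disjoint copies of the two extended alphabets together with a fresh symbol $\#$, start word $\perp_a\#\perp_b$, and let the new $\theta$, $\varphi$, $\psi$ be the (commuting) products of the extensions of the two components' maps fixing the other alphabet and $\#$; then $k$ applications of $\varphi$ advance both clocks in lockstep to index $M+k$, nothing but $\#$ maps to $\#$, and the start word has the required form. Finally combine with $L_{<M}$ via \cref{hash_sep_union_lem}.

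It remains to track space and boundedness. The fundamental solution $(x_1,y_1)$, hence the coefficients $2x_1-1,2x_1-2$ and the constants $\gamma(2-2x_1)$, $\zeta(2-2x_1)$, can be written down in space $O(\sqrt D\log D)$ by \cref{pell_eqn_bound_lem}; the threshold $M$ is logarithmic in the coefficients; and by \cref{x_N_bound_lem} the initial data at index $M$ --- the quantities $x_M,y_M,z_M,w_M,t_M,s_M$ --- have bit-length bounded by a (logarithmic-in-coefficients) times (linear-in-$D$) quantity, so all of these are generated inside the rational control by chains of doubling/increment endomorphisms and never appear as a large single image. Consequently the endomorphism images are bounded by the maximum of a quantity linear in $\max(|\alpha|,\dots,|\zeta|)$ (from the fixed blocks $a^{\gamma}$, $b^{\zeta}$) and a quantity exponential in $D$ (from $x_1$), the extended alphabet has constant size, and the whole construction, together with the union of \cref{hash_sep_union_lem}, goes through in $\ns(fg)$ with $f$ logarithmic in the coefficients and $g$ linear in $D$. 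The hard part of the write-up will be the exhaustiveness of the sign case-analysis for $\gamma$ and $\zeta$ and the discipline of introducing every large auxiliary integer through the rational control, so that the boundedness is genuinely linear-in-coefficients times exponential-in-$D$ rather than polynomial-of-$D$-degree in the coefficients.
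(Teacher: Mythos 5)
Your proposal is correct and follows essentially the same route as the paper: split off the finite prefix below a threshold $M$, use \cref{Pell_seq_single_rec_relation_lem,alphax_nbeta_y_n_lem,alphax_nbeta_y_n_gamma_lem} to put the tail sequences (split by the signs of the coefficients) into the shape required by \cref{gen_fibonacci_EDT0L_lem}, merge the $a$- and $b$-components in lockstep into one $\#$-separated system, take the union via \cref{hash_sep_union_lem}, and control space and boundedness through \cref{pell_eqn_bound_lem,x_N_bound_lem}. Your variations --- homogenising the inhomogeneous recurrence with an explicit constant track and an $a\leftrightarrow a^{-1}$ output swap, and building large initial integers by doubling chains rather than as single endomorphism images --- are only cosmetic repackagings of the paper's use of the triples $(t_n,\,z_n,\,s_n)$ and $(z_n,\,x_n,\,y_n)$ with initial data written directly into the maps $\theta$ and $\pi_n$.
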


	\begin{proof}
		Let \(z_n = \alpha x_n + \beta y_n\) and \(t_n = z_n + \gamma\) for \(n \in
		\mathbb{Z}_{\geq 0}\), and \(s_n = z_n - z_{n - 1} + \gamma\) for \(n \in
		\mathbb{Z}_{> 0}\). Let \(M_\gamma = \max \left(2, \ \left \lceil \log_2
		\frac{(|\gamma| + 3)\alpha}{\beta} \right \rceil\right)\), \(M_\zeta = \max
		\left(2, \ \left \lceil \log_2 \frac{(|\zeta| + 3)\alpha}{\beta} \right
		\rceil\right)\), and \(M = \max(M_\gamma, \ M_\zeta)\). We will first
		construct an EDT0L system for
		\[
			K = \{a^{t_n} \mid n \in \mathbb{Z}_{\geq M)}\}.
		\]
		If \(\alpha \leq 0\) and \(\beta \geq 0\), or \(\alpha \geq 0\) and \(\beta
		\leq 0\), Lemma \ref{alphax_nbeta_y_n_gamma_lem} tells us that the sequences
		\((t_n)_{n \geq M}\), \((z_n)_{n \geq M}\) and \((s_n)_{n \geq M}\) satisfy
		the conditions of Lemma \ref{gen_fibonacci_EDT0L_lem}, and thus \(K\) is
		accepted by an EDT0L system \(\mathcal H = (\{a, \ a^{-1}\}, \ C, \ \perp, \
		\theta \varphi^\ast \psi)\).

		If \(\alpha\) and \(\beta\) are both non-negative or non-positive, then
		\[
			z_n = \alpha (x_1 x_{n - 1} + D y_1 y_{n - 1}) + \beta (y_1 x_{n - 1}
			+ x_1 y_{n - 1})
			= (\alpha x_1 + \beta y_1) x_{n - 1} + (\alpha D y_1 + \beta x_1) y_{n - 1}.
		\]
		This, together with the recurrence relations in Lemma
		\ref{Pell_eqn_sequence_lem}, gives that \((z_n)_{n \geq M}\), \((x_n)_{n
		\geq M}\) and \((y_n)_{n \geq M}\) satisfy the conditions of Lemma
		\ref{gen_fibonacci_EDT0L_lem}, and so in we also have in this case that
		\(L\) is accepted by an EDT0L system \(\mathcal H = (\{a, \ a^{-1}\}, \ C, \
		\perp, \ \theta \varphi^\ast \psi)\).

		We next consider the space complexity in which \(\mathcal H\) can be
		constructed. By Lemma \ref{pell_eqn_bound_lem}, \(\log(x_1)\) and
		\(\log(y_1)\) are both bounded by \(2 + 3D\). By Lemma \ref{x_N_bound_lem},
		\(\log(x_M)\), \(\log(y_M)\), \(\log|z_M|\), \(\log|t_M|\) and \(\log|s_M|\)
		are all bounded by \(fg\), where \(f\) is logarithmic in \(|\alpha|\),
		\(|\beta|\), \(|\gamma|\) and \(|\zeta|\), and \(g\) is linear in \(D\).
		Thus we can use Lemma \ref{gen_fibonacci_EDT0L_lem} (2) to say that
		\(\mathcal H\) is constructible in \(\ns(fg)\). We also know from Lemma
		\ref{gen_fibonacci_EDT0L_lem} (4), that \(|C|\) and \(\max\{|c \phi| \mid c
		\in C, \ \phi \in \{\psi, \ \theta, \ \varphi\}\}\) are both bounded in
		terms of an exponential function of \(fg\). Thus \(|C|\) and \(\max\{c \phi
		\mid c \in C, \ \phi \in \{\psi, \ \theta, \ \varphi\}\}\) are both bounded
		by \(h_1 h_2\) where \(h_1\) is linear in \(|\alpha|\), \(|\beta|\),
		\(|\gamma|\) and \(|\zeta|\), and \(h_2\) is exponential in \(D\).

		Let \(\hat{z}_n = \delta x + \epsilon y\) and \(\hat{t}_n = \hat{z}_n +
		\zeta\) for \(n \in \mathbb{Z}_{\geq 0}\), and \(\hat{s}_n = \hat{z}_n -
		\hat{z}_{n - 1} + \zeta\) for \(n \in \mathbb{Z}_{> 0}\). With the same
		arguments we used to show \(K\) is accepted by \(\mathcal H\), we have that
		\[
			\{b^{\hat{t}_n} \mid n \in \mathbb{Z}_{\geq M}\},
		\]
		is accepted by an EDT0L system \(\hat{\mathcal H} = (\{b, \ b^{-1}\}, \ D, \
		\$, \ \sigma \rho^\ast \tau)\). In addition, \(\hat{\mathcal H}\) is
		constructible in \(\ns(\hat{f} \hat{g})\), and \(|D|\) and \(\max\{|c \phi|
		\mid c \in D, \ \phi \in \{\sigma, \ \rho, \ \tau\}\}\) are both bounded by
		\(\hat{h}_1 \hat{h}_2\), where \(\hat{f}\) and \(\hat{h}_1\) are logarithmic
		and linear respectively in \(|\delta|\), \(|\epsilon|\), \(|\gamma|\) and
		\(|\zeta|\), and \(\hat{g}\) and \(\hat{h}_2\) are linear and exponential
		respectively in \(D\). Redefining \(f\), \(g\), \(h_1\) and \(h_2\) to be
		the sum of themselves and their hatted versions, gives that both \(\mathcal
		H\) and \(\hat{\mathcal H}\) are constructible in \(\ns(fg)\). In addition,
		\(|C|\), \(|D|\), \(\max\{|c \phi| \mid c \in C, \ \phi \in \{\psi, \
		\theta, \ \varphi\}\}\) and  \(\max\{|c \phi| \mid c \in D, \ \phi \in
		\{\sigma, \ \rho, \ \tau\}\}\) are all bounded by \(h_1 h_2\).

		Without loss of generality, we can assume that \(C\) and \(D\) are disjoint,
		and \(\# \notin C \cup D\). For each endomorphism \(\phi \in \{\psi, \
		\theta, \ \varphi, \ \sigma, \ \rho, \ \tau\}\), let \(\bar{\phi} \in
		\End{(C \cup D \cup \{\#\})^\ast}\) be defined to be the extension of
		\(\phi\) to \(C \cup D \cup \{\#\}\) which acts as the identity on wherever
		it was not previously defined on. It follows that
		\[
			P = \{a^{t_n} \# b^{\hat{t}_n} \mid n \in \mathbb{Z}_{\geq M}\}
		\]
		is accepted by the \(\#\)-separated EDT0L system \(\mathcal G = (\{a, \
		a^{-1}, \ b, \ b^{-1}, \ \#\}, \ C \cup D \cup \{\#\}, \ \perp \# \$, \
		\theta \sigma (\varphi \rho)^\ast \psi \tau)\).

		Since \(\mathcal H\) and \(\hat{\mathcal H}\) are constructible in
		\(\ns(fg)\), so is \(\mathcal G\). In addition, \(|C \cup D \cup \{\#\}|\)
		is bounded by \(h_1 h_2 + 1\), and \(\max\{|c\bar{\phi}| \mid c \in C \cup D
		\cup \{\#\}, \ \phi \in \{\psi, \ \theta, \ \varphi, \ \sigma, \ \rho, \
		\tau\}\}\) is bounded by \(h_1h_2\). Redefining \(h_2\) to be \(h_2 + 1\),
		gives that \(\mathcal G\) satisfies all of the conditions of the lemma.

		We now consider the language
		\[
			Q = \{a^{t_n} \# b^{\hat{t}_n} \mid n \in \{0, \ \ldots, \ M - 1\}\}
		\]
		Note that using Lemma \ref{hash_sep_union_lem}, it now suffices to show that
		\(Q\) is accepted by a \(\#\)-separated EDT0L system that is constructible
		in \(\ns(fg)\), and whose extended alphabet and images of letters under
		endomorphisms in the alphabet of the rational control are bounded by
		\(h_1 h_2\).

		Let \(E = \{\perp_1, \ \perp_2, \ a, \ a^{-1}, \ b, \ b^{-1}, \ \#\}\). We
		will use \(E\) as our extended alphabet, and \(\perp_1 \# \perp_2\) as our
		start symbol. For each \(n \in \{0, \ \ldots, \ M - 1\}\), define \(\pi_n
		\in \End(E^\ast)\) by
		\[
			c \pi_n = \left\{
			\begin{array}{cl}
				a^{t_n} & c = \perp_1 \\
				b^{\hat{t}_n} & c = \perp_2 \\
				c & \text{ otherwise}.
			\end{array}
			\right.
		\]
		It follows that \(Q\) is accepted by the \(\#\)-separated EDT0L system
		\[
			\mathcal F = (\{a, \ a^{-1}, \ b, \ b^{-1}, \ \#\}, \ E, \ \perp_1 \#
			\perp_2, \ \{\pi_0, \ \ldots, \ \pi_{M - 1}\}).
		\]
		Note that \(t_0 = \alpha + \gamma\), \(\hat{t}_0 = \delta + \zeta\). Thus
		\(\log|t_0|\) and \(\log|\hat{t_0}|\) are both bounded by a logarithmic
		function \(f_1\) in terms of \(|\alpha|\), \(|\beta|\), \(|\gamma|\),
		\(|\delta|\), \(|\epsilon|\) and \(|\zeta|\). By redefining \(f\) to be \(f
		+ f_1\), we have that \(\log|t_0|\) and \(\log|\hat{t}_0|\) are bounded by
		\(fg\). In addition, \(\log|t_M|\) and \(\log|\hat{t}_M|\) are both bounded
		by \(fg\). Since \((t_n)\) and \((\hat{t}_n)\) are monotone, and terms are
		effectively computable by Lemma \ref{alphax_nbeta_y_n_gamma_lem}, each
		\(\pi_n\) can be constructed in \(\ns(fg)\). As \(E\) and \(\perp_1 \#
		\perp_2\) are constructible in constant space, it follows that \(\mathcal
		F\) is also constructible in \(\ns(fg)\).

		We have that \(|t_0|\) and \(|\hat{t}_0|\) are bounded by a linear function
		\(f_3\) in terms of \(|\alpha|\), \(|\beta|\), \(|\gamma|\), \(|\delta|\),
		\(|\epsilon|\) and \(|\zeta|\). By redefining \(h_1\) to be \(h_1 + f_3\),
		we have that \(|t_0|\), \(|\hat{t}_0|\), \(|t_M|\) and \(|\hat{t}_M|\) are
		all bounded by \(h_1 h_2\), and thus \(\max\{ |c \pi_i| \mid c \in E, \ i
		\in \{0, \ \ldots, \ M - 1\}\}\) and \(|E|\) are both bounded by \(h_1
		h_2\).
	\end{proof}

\section{Quadratic equations in the ring of integers}
	\label{quad_eqns_sec}

	Having completed the work on Pell's equation, we now consider more general
	quadratic equations in the ring of integers, working up to an arbitrary
	two-variable equation. Our main goal is to show that the solution language to
	an arbitrary two-variable quadratic equation is EDT0L, with an
	EDT0L system that is constructible in non-deterministic polynomial space. We
	start with the general Pell's equation.

	\begin{dfn}
		A \textit{general Pell's equation} is an equation \(X^2 - DY^2 = N\) in the
		ring of integers, where \(X\) and \(Y\) are variables, \(N \in \mathbb{Z}
		\backslash \{0\}\) and \(D \in \mathbb{Z}_{> 0}\) is not a perfect square.

		A non-negative integer solution \((x, \ y)\) to the general Pell's equation
		\(X^2 - DY^2 = N\) is called \textit{primitive} if \(\gcd(x, \ y) = 1\).
	\end{dfn}

	Before we can generalise Lemma \ref{Pell_multiple_EDT0L_lem} to a general
	Pell's equation, we first generalise it to the primitive solutions to a
	general Pell's equation. The following result allows us to construct
	primitive solutions to a general Pell's equation from the solutions to the
	corresponding Pell's equation, and a given primitive solution.

	\begin{lem}[\cite{quad_dioph_eqns_book}, Section 4.1]
		\label{gen_pell_eqn_seq_lem}
		Let \((x_0, \ y_0)\) be a primitive solution to the general Pell's equation
		\(X^2 - DY^2 = N\). Let \((u_n, \ v_n)\) be the sequence of solutions (as
		described in Lemma \ref{Pell_eqn_sequence_lem}) to \(U^2 - D V^2 = 1\).
		Define \(((x_n, \ y_n))_n \subseteq \mathbb{Z}_{\geq 0}^2\) by
		\[
			x_n = x_0 u_n + Dy_0 v_n, \qquad y_n = y_0 u_n + x_0 v_n.
		\]
		Then \((x_n, \ y_n)\) is a primitive solution to \(X^2 - DY^2 = N\) for all
		\(n \in \mathbb{Z}_{\geq 0}\).
	\end{lem}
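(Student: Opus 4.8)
The plan is to carry out the computation in the quadratic ring \(\mathbb{Z}[\sqrt D]\) (equivalently \(\mathbb{Z}[t]/(t^2-D)\)), equipped with the norm \(N(a+b\sqrt D)=a^2-Db^2\), which is multiplicative. The first observation is that the defining formulas for \((x_n,y_n)\) are precisely the coefficients of the product
\[
	x_n + y_n\sqrt D = (x_0 + y_0\sqrt D)(u_n + v_n\sqrt D),
\]
since expanding the right-hand side gives \((x_0 u_n + D y_0 v_n) + (x_0 v_n + y_0 u_n)\sqrt D\). Taking norms and using \(N(x_0 + y_0\sqrt D) = x_0^2 - Dy_0^2 = N\) together with \(N(u_n + v_n\sqrt D) = u_n^2 - Dv_n^2 = 1\) (from \cref{Pell_eqn_sequence_lem}) immediately yields \(x_n^2 - Dy_n^2 = N\), so each \((x_n,y_n)\) is a solution; if one prefers to avoid irrationalities, the same conclusion follows from directly expanding \(x_n^2 - Dy_n^2\) via the Brahmagupta identity, which is a routine computation. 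Non-negativity is then immediate: \((x_0,y_0)\) is non-negative by hypothesis, each \((u_n,v_n)\) is non-negative by \cref{Pell_eqn_sequence_lem}, and \(D>0\), so \(x_n = x_0 u_n + D y_0 v_n\) and \(y_n = y_0 u_n + x_0 v_n\) are sums of non-negative integers.

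For primitivity I would exploit the fact that \(u_n + v_n\sqrt D\) is a unit of norm \(1\), with inverse \(u_n - v_n\sqrt D\) (because \((u_n+v_n\sqrt D)(u_n-v_n\sqrt D)=u_n^2-Dv_n^2=1\)). Multiplying the displayed identity by \(u_n - v_n\sqrt D\) recovers
\[
	x_0 + y_0\sqrt D = (x_n + y_n\sqrt D)(u_n - v_n\sqrt D),
\]
that is, \(x_0 = x_n u_n - D y_n v_n\) and \(y_0 = y_n u_n - x_n v_n\). Hence any common divisor of \(x_n\) and \(y_n\) divides both \(x_0\) and \(y_0\); since \(\gcd(x_0,y_0)=1\), we conclude \(\gcd(x_n,y_n)=1\), as required.

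There is no serious obstacle here: the entire content is the multiplicativity of the norm form and the observation that the Pell solutions \((u_n,v_n)\) are units. The only points demanding mild care are verifying \((u_n+v_n\sqrt D)(u_n-v_n\sqrt D)=1\) before "dividing" by \(u_n+v_n\sqrt D\), and keeping the sign conventions consistent between the two coefficient formulas; everything else is direct expansion.
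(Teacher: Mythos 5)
Your proof is correct, and it is essentially the standard argument (the one behind the cited reference): compose the solution $x_0+y_0\sqrt D$ with the norm-one unit $u_n+v_n\sqrt D$ in $\mathbb{Z}[\sqrt D]$, use multiplicativity of the norm for the solution property, positivity of all the ingredients for non-negativity, and multiplication by the inverse unit $u_n-v_n\sqrt D$ for primitivity. The paper itself gives no proof, only the citation, so there is nothing to contrast with. One small point of hygiene: when you read off $x_0 = x_nu_n - Dy_nv_n$ and $y_0 = y_nu_n - x_nv_n$ by comparing coefficients of $1$ and $\sqrt D$, you are implicitly using that $\sqrt D$ is irrational, which is guaranteed because the general Pell's equation requires $D$ not to be a perfect square; alternatively these two identities can be checked by direct substitution using $u_n^2 - Dv_n^2 = 1$, avoiding the issue entirely.
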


	We will put an equivalence relation on the set of primitive solutions to a
	general Pell's equation. This will allow us to consider one class at a time,
	then use Lemma \ref{hash_sep_union_lem} to take the union.

	\begin{dfn}
		Let \((x, \ y)\) and \((x', \ y')\) be primitive solutions to the general
		Pell's equation \(X^2 - DY^2 = N\). If there exists a primitive solution
		\((x_0, \ y_0)\) such that \((x, \ y) = (x_m, \ y_m)\) and \((x', \ y') =
		(x_n, \ y_n)\), for some \(m, \ n \in \mathbb{Z}_{\geq 0}\) (using the
		construction in Lemma \ref{gen_pell_eqn_seq_lem}), we say \((x, \ y)\) and
		\((x', \ y')\) are \textit{associated} with each other.
	\end{dfn}

	\begin{lem}
		Association of primitive solutions to a general Pell's equation is an
		equivalence relation.
	\end{lem}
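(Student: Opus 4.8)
The plan is to convert the combinatorial recursion of \cref{gen_pell_eqn_seq_lem} into multiplication inside $\mathbb{R}$, where the three equivalence-relation axioms become transparent. First I would record the embedding $(a,b) \mapsto a + b\sqrt{D}$: since $D$ is not a perfect square, $\sqrt{D}$ is irrational, so this map is injective and $(a,b)$ is determined by $a + b\sqrt{D}$. Writing $\epsilon = u_1 + v_1\sqrt{D}$ for the element attached to the fundamental solution of $U^2 - DV^2 = 1$, the recursion of \cref{Pell_eqn_sequence_lem} gives $u_n + v_n\sqrt{D} = \epsilon^n$ by an immediate induction, and then the recursion of \cref{gen_pell_eqn_seq_lem} says exactly that $x_n + y_n\sqrt{D} = (x_0 + y_0\sqrt{D})\epsilon^n$. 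I would also note that $\epsilon > 0$ (the fundamental solution has $v_1 \geq 1$) and that every non-negative solution $(a,b)$ of $X^2 - DY^2 = N$ satisfies $a + b\sqrt{D} > 0$, since it cannot vanish when $N \neq 0$.

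Next I would establish the characterisation: two primitive solutions $(x,y)$ and $(x',y')$ are associated if and only if $\frac{x + y\sqrt{D}}{x' + y'\sqrt{D}}$ is an integer power of $\epsilon$. The forward direction is immediate: a common primitive ancestor $(x_0,y_0)$ with $(x,y) = (x_m,y_m)$ and $(x',y') = (x_n,y_n)$ forces the ratio to be $\epsilon^{m-n}$. For the converse, if the ratio equals $\epsilon^k$ with $k \geq 0$, then $x + y\sqrt{D} = (x' + y'\sqrt{D})\epsilon^k$, and applying the construction of \cref{gen_pell_eqn_seq_lem} to the primitive solution $(x',y')$ produces $(x',y')$ at index $0$ and, by injectivity of the embedding, $(x,y)$ at index $k$, both primitive; hence $(x,y)$ and $(x',y')$ are associated. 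If $k < 0$ one swaps the roles of the two solutions.

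Given the characterisation, the three axioms are one-liners: reflexivity is $\epsilon^0 = 1$, symmetry is that the reciprocal of $\epsilon^k$ is $\epsilon^{-k}$, and transitivity is $\epsilon^j \epsilon^k = \epsilon^{j+k}$ applied to the product of two ratios. The only real content — and the step I expect to take the most care — is the converse of the characterisation, equivalently a direct proof of transitivity: one must ensure that the common ancestor can always be chosen so that both solutions occur at \emph{non-negative} indices of its sequence. The multiplicative picture handles this automatically, because cancelling a common power of $\epsilon$ from an identity $(x_0 + y_0\sqrt{D})\epsilon^n = (x_0' + y_0'\sqrt{D})\epsilon^p$ relating two candidate ancestors exhibits whichever one has the larger exponent as a term of the other's sequence; a purely combinatorial argument would instead require an explicit case split on which index is larger.
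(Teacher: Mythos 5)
Your proof is correct and complete. Note, however, that the paper offers no proof of this lemma at all: it is stated bare, implicitly treated as standard material from \cite{quad_dioph_eqns_book}, so there is no in-paper argument to compare against. Your route is the classical one: pushing the recursions of \cref{Pell_eqn_sequence_lem} and \cref{gen_pell_eqn_seq_lem} through the injective map \((a, \ b) \mapsto a + b\sqrt{D}\) (injective precisely because \(D\) is not a perfect square) turns the sequence generated by a base solution into multiplication by powers of the unit \(\epsilon = u_1 + v_1\sqrt{D}\), and your characterisation --- associated if and only if the ratio \((x + y\sqrt{D})/(x' + y'\sqrt{D})\) is \(\epsilon^k\) for some \(k \in \mathbb{Z}\) --- is essentially how association is defined in the number-theoretic literature, which is why the lemma is routine from that viewpoint. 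The only step that genuinely needs care, namely that for transitivity (equivalently the converse of your characterisation) one must exhibit a common ancestor at which \emph{both} solutions occur at non-negative indices, you handle correctly: take the solution with the smaller exponent as the ancestor (the swap for \(k < 0\)), using that \(x + y\sqrt{D} > 0\) for a non-negative solution when \(N \neq 0\) so that the ratio is well defined, and injectivity of the embedding to identify the \(k\)-th term of its sequence with the other solution. I see no gaps.
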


	Equivalence classes of primitive solutions, which we will call classes,
	have a notion of a fundamental solution, similar to the fundamental solution
	to Pell's equation.

	\begin{dfn}
		The \textit{class} of a primitive solution \((x, \ y)\) of a general
		Pell's equation is the equivalence class of all primitive solutions
		associated with \((x, \ y)\).

		The \textit{fundamental solution} of a class of primitive solutions to
		a general Pell's equation is the minimal element of the class.
	\end{dfn}

	We will need the following bounds for the space complexity results.

	\begin{lem}[\cite{quad_dioph_eqns_book}, Theorem 4.1.1 and Theorem 4.12]
		\label{gen_pell_eqn_bound_lem}
		Let \((x_0, \ y_0)\) be the fundamental solution of a class of primitive
		solutions to the general Pell's equation \(X^2 - DY^2 = N\). Let \((u_1, \
		v_1)\) be the fundamental solution to \(X^2 - DY^2 = 1\). If \(N > 0\), then
		\[
			0 \leq x_0 \leq \sqrt{\frac{N(u_1 + 1)}{2}}, \qquad
			0 < y_0  \leq \frac{v_1 \sqrt{N}}{\sqrt{2(u_1 + 1)}}.
		\]
		If \(N < 0\), then
		\[
			0 \leq x_0 \leq \sqrt{\frac{|N|(u_1 - 1)}{2}}, \qquad
			0 < y_0 \leq \frac{v_1 \sqrt{|N|}}{\sqrt{2(u_1 - 1)}}.
		\]
	\end{lem}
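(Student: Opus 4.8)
The statement is a classical fact about the layout of the solutions of a generalised Pell equation, proved in full in the cited reference; here I only sketch the route I would take. The plan is to pass to the ring $\mathbb{Z}[\sqrt D]$, reduce the four asserted inequalities to a single one, and then read that one off from the geometry of a $\langle\kappa\rangle$-orbit, where $\kappa = u_1 + v_1\sqrt D$ is the fundamental unit of norm one. Throughout I use $\mathrm N(x+y\sqrt D) = x^2 - Dy^2$, and note $\kappa > 1$, $\kappa^{-1} = u_1 - v_1\sqrt D$, so $\kappa + \kappa^{-1} = 2u_1$.

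First I would set things up. Solutions of $X^2 - DY^2 = N$ correspond exactly to elements $\alpha = x + y\sqrt D$ of norm $N$; the norm-one units are $\pm\kappa^{\mathbb Z}$; and a class of primitive solutions is an orbit under multiplication by $\langle\kappa\rangle$, taken up to the sign symmetries $\alpha \mapsto \pm\alpha$. Its fundamental solution is the minimal member of the class in the order used in \cite{quad_dioph_eqns_book}, which I normalise so that $x_0 \ge 0$ and $y_0 > 0$; write $\alpha_0 = x_0 + y_0\sqrt D$ and $\bar\alpha_0 = x_0 - y_0\sqrt D$, so $\alpha_0\bar\alpha_0 = N$.

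The first move is to show that, whether $N > 0$ or $N < 0$, the two stated bounds on $x_0$ and on $y_0$ are together equivalent to the single inequality $x_0^2 + Dy_0^2 \le |N|\,u_1$. This is elementary bookkeeping with $u_1^2 - Dv_1^2 = 1$ and $x_0^2 = N + Dy_0^2$: for $N > 0$ the $y_0$-bound rearranges to $Dy_0^2 \le N(u_1-1)/2$, whence $x_0^2 = N + Dy_0^2 \le N(u_1+1)/2$, and running the manipulation backwards recovers both bounds; the case $N<0$ is the same computation. The second move is to rewrite $x_0^2 + Dy_0^2 = \tfrac12\bigl(\alpha_0^2 + \bar\alpha_0^2\bigr) = \tfrac{|N|}{2}\bigl(\rho + \rho^{-1}\bigr)$ with $\rho = |\alpha_0|^2/|N| = |\alpha_0|/|\bar\alpha_0|$; since $t + t^{-1} \le \kappa + \kappa^{-1} = 2u_1$ for $t>0$ precisely when $\kappa^{-1} \le t \le \kappa$, the target inequality becomes the assertion that $|\alpha_0|$ lies within a multiplicative factor $\sqrt\kappa$ of $\sqrt{|N|}$, i.e. $|\alpha_0| \in \bigl[\sqrt{|N|}\,\kappa^{-1/2},\ \sqrt{|N|}\,\kappa^{1/2}\bigr]$.

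It then remains to see that the fundamental solution really does sit in that window. The absolute values $|\alpha|$ of the solutions in a class form a two-sided geometric progression of ratio $\kappa$, so exactly one solution of the class falls in $\bigl[\sqrt{|N|}\,\kappa^{-1/2},\ \sqrt{|N|}\,\kappa^{1/2}\bigr)$, and this is the unique minimiser of $x^2 + Dy^2$ over the class together with its conjugate class. The hard part — and the reason I would defer to the cited reference rather than grind it out — is confirming that this minimiser, after the sign normalisation, is exactly the minimal element of the class: this forces one to keep careful track of the sign conventions, of how a class and its conjugate class sit together, and of the ambiguous classes where the two coincide, and it is precisely this case analysis that pins down the sharp constants $\sqrt{N(u_1 \pm 1)/2}$ rather than weaker ones. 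Since all of this is done in detail in \cite{quad_dioph_eqns_book}, Theorem 4.1.1 and Theorem 4.12, I would simply invoke it.
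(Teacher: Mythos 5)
The paper does not actually prove this lemma at all: it is stated as a quoted result, attributed directly to \cite{quad_dioph_eqns_book}, so deferring the sharp constants to that reference is precisely what the paper itself does. Your preliminary reductions are sound as far as they go (the pair of bounds is indeed equivalent to \(x_0^2 + Dy_0^2 \le |N|\,u_1\), which in turn places \(|\alpha_0|\) in the window \(\bigl[\sqrt{|N|}\,\kappa^{-1/2}, \sqrt{|N|}\,\kappa^{1/2}\bigr]\)), so your proposal is consistent with the paper's treatment.
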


	Since the size of fundamental solutions to a general Pell's equation is
	bounded, there can only be finitely many, and hence only finitely many
	classes.

	\begin{lem}
		\label{finitely_many_classes_lem}
		There are finitely many classes of primitive solutions to a general Pell's
		equation.
	\end{lem}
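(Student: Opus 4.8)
The plan is to combine the bounds from \cref{gen_pell_eqn_bound_lem} with the previously established bound on the fundamental solution to Pell's equation (\cref{pell_eqn_bound_lem}) to argue that there are only finitely many possible fundamental solutions of classes, and then to observe that each class is determined by its fundamental solution.

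First I would fix the general Pell's equation $X^2 - DY^2 = N$ and let $(u_1, \ v_1)$ be the fundamental solution to $X^2 - DY^2 = 1$. By \cref{gen_pell_eqn_bound_lem}, every fundamental solution $(x_0, \ y_0)$ of a class of primitive solutions satisfies $0 \leq x_0 \leq \sqrt{|N|(u_1 + 1)/2}$ and $0 \leq y_0 \leq v_1\sqrt{|N|}/\sqrt{2(u_1 \pm 1)}$ (with the sign depending on that of $N$), so in particular both $x_0$ and $y_0$ lie in a bounded interval of non-negative integers. Hence there are only finitely many candidate pairs $(x_0, \ y_0)$, and therefore only finitely many fundamental solutions of classes.

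Next I would use the fact that association of primitive solutions is an equivalence relation (stated just above) together with the definition that the fundamental solution of a class is its minimal element: this sets up a well-defined map from the set of classes to the (finite) set of fundamental solutions, and this map is injective because two classes with the same minimal element are equal. Since the target is finite, the set of classes is finite.

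This argument is essentially bookkeeping, so there is no real obstacle; the only point requiring slight care is confirming that every class does possess a (necessarily unique) minimal element, so that the map to fundamental solutions is well-defined — but this is immediate since a class is a non-empty set of elements of $\mathbb{Z}_{\geq 0}^2$ and the $\ell^1$-metric (or any similar order) is well-founded on $\mathbb{Z}_{\geq 0}^2$. I would then simply conclude that, as each class contains at least one primitive solution and is determined by its finitely many possible fundamental solutions, there are finitely many classes of primitive solutions to a general Pell's equation.
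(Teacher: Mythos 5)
Your proposal is correct and matches the paper's (implicit) argument: the paper simply notes that since the fundamental solutions of classes are bounded by Lemma~\ref{gen_pell_eqn_bound_lem}, there are finitely many of them, and hence finitely many classes. Your additional remarks about well-definedness and injectivity of the class-to-fundamental-solution map are just the bookkeeping the paper leaves unstated.
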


	We now show that the results stated in Lemma \ref{Pell_multiple_EDT0L_lem}
	hold for primitive solutions to a general Pell's equation. We use the
	characterisation in Lemma \ref{gen_pell_eqn_seq_lem} to reduce the problem to
	Pell's equation, and then apply Lemma \ref{Pell_multiple_EDT0L_lem}.

	\begin{lem}
		\label{gen_pell_eqn_primitive_EDT0L_lem}
		Let \(S\) be the set of primitive solutions to the general Pell's equation
		\(X^2 - DY^2 = N\), and \(\alpha, \ \beta, \ \gamma, \ \delta, \ \epsilon, \
		\zeta \in \mathbb{Z}\). Then
		\begin{enumerate}
			\item The language \(L = \{a^{\alpha x + \beta y + \gamma} \# b^{\delta x
			+ \epsilon y + \zeta} \mid (x, \ y) \in S\}\) is EDT0L;
			\item A \(\#\)-separated EDT0L system \(\mathcal H\) for \(L\) is
			constructible in \(\ns(fg)\), where \(f\) is logarithmic in
			\(\max(|\alpha|, \ |\beta|, \ |\gamma|, \ |\delta|, \ |\epsilon|, \
			|\zeta|, \ |N|, \ D)\), and \(g\) is linear in \(D\);
			\item The system \(\mathcal H\) is \(h_1 h_2\)-bounded, where \(h_1\) is
			linear in \(\max(|\alpha|, \ |\beta|, \ |\gamma|, \ |\delta|, \
			|\epsilon|, \ |\zeta|, \ |N|)\), and \(h_2\) is exponential in \(D\).
		\end{enumerate}
	\end{lem}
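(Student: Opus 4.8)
The plan is to split the primitive solution set $S$ into its finitely many classes via \cref{finitely_many_classes_lem}, treat one class at a time by reducing it to \cref{Pell_multiple_EDT0L_lem}, and then reassemble the pieces with \cref{hash_sep_union_lem}.

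Fix a class and let $(x_0, y_0)$ be its fundamental solution. By \cref{gen_pell_eqn_seq_lem}, together with the definition of association and the minimality of the fundamental solution (monotonicity of the Pell sequence shows the forward orbit of $(x_0, y_0)$ omits no non-negative solution of the class), the non-negative solutions in this class are precisely the pairs $(x_n, y_n)$ with $x_n = x_0 u_n + D y_0 v_n$ and $y_n = y_0 u_n + x_0 v_n$, where $(u_n, v_n)$ runs over the non-negative solutions of $U^2 - DV^2 = 1$. Substituting,
\[
	\alpha x_n + \beta y_n + \gamma = (\alpha x_0 + \beta y_0) u_n + (\alpha D y_0 + \beta x_0) v_n + \gamma,
\]
and similarly $\delta x_n + \epsilon y_n + \zeta = (\delta x_0 + \epsilon y_0) u_n + (\delta D y_0 + \epsilon x_0) v_n + \zeta$. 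Thus the part of $L$ coming from this class is exactly the language of \cref{Pell_multiple_EDT0L_lem} for the equation $U^2 - DV^2 = 1$ with the new coefficients $\alpha x_0 + \beta y_0,\ \alpha D y_0 + \beta x_0,\ \gamma,\ \delta x_0 + \epsilon y_0,\ \delta D y_0 + \epsilon x_0,\ \zeta$; in particular it is accepted by a $\#$-separated EDT0L system of the form produced there. Taking the finite union over all classes using \cref{hash_sep_union_lem} gives a $\#$-separated EDT0L system for $L$, which proves (1).

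For (2) and (3) I would propagate size bounds through this reduction. \cref{gen_pell_eqn_bound_lem} bounds $x_0$ and $y_0$ in terms of $|N|$ and the fundamental solution $(u_1, v_1)$ of $U^2 - DV^2 = 1$, and \cref{pell_eqn_bound_lem} bounds $\log u_1$ and $\log v_1$ by $\sqrt D(\log 4D + 2)$; combining these, the bit-lengths of the six new coefficients are bounded by a function that is logarithmic in $\max(|\alpha|, |\beta|, |\gamma|, |\delta|, |\epsilon|, |\zeta|, |N|)$ together with a term of the size allowed by the linear-in-$D$ factor of \cref{Pell_multiple_EDT0L_lem}. Feeding these into parts (2) and (3) of \cref{Pell_multiple_EDT0L_lem}, each class yields a $\#$-separated system of the required shape; finally, the number of classes is finite and effectively bounded via \cref{gen_pell_eqn_bound_lem}, so repeatedly applying \cref{hash_sep_union_lem} keeps the construction in $\ns(fg)$ and only inflates the boundedness function by a factor controlled by the number of classes, which is absorbed into the claimed $h_1 h_2$ (using that $h_2$ is already exponential in $D$).

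The step I expect to be the main obstacle is precisely this complexity bookkeeping. Because the fundamental solution $(u_1, v_1)$ of $U^2 - DV^2 = 1$ can be exponentially large in $\sqrt D$, both the transformed coefficients and the number of classes are a priori large, and one must check carefully that every blow-up is of exactly the permitted shape — the $f$-factor remaining logarithmic in the original data (up to the separate $D$-dependence) and the $h_2$-factor remaining exponential in $D$ — and that iterating the binary union of \cref{hash_sep_union_lem} across all classes, each step doubling an alphabet bound, does not push the construction outside $\ns(fg)$. The purely language-theoretic content — the substitution above and the appeals to \cref{Pell_multiple_EDT0L_lem} and \cref{hash_sep_union_lem} — is routine once the class decomposition is in place.
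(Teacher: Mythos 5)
Your proposal follows essentially the same route as the paper: decompose the primitive solutions into their finitely many classes, use the parametrisation of \cref{gen_pell_eqn_seq_lem} to rewrite \(\alpha x_n + \beta y_n + \gamma\) and \(\delta x_n + \epsilon y_n + \zeta\) as linear expressions in the Pell solutions \((u_n, v_n)\) with new coefficients built from the fundamental solution \((x_0, y_0)\), bound those coefficients via \cref{gen_pell_eqn_bound_lem} and \cref{pell_eqn_bound_lem}, apply \cref{Pell_multiple_EDT0L_lem} to each class, and finish with \cref{hash_sep_union_lem}. This matches the paper's proof; the complexity bookkeeping you flag as the main obstacle is handled there by exactly the coefficient-size estimates you describe.
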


	\begin{proof}
		Since finite unions of EDT0L languages are EDT0L, and the properties in (2)
		and (3) are preserved (Lemma \ref{hash_sep_union_lem}), using Lemma
		\ref{finitely_many_classes_lem} it is sufficient to show that for any class
		of primitive solutions \(K\), the language
		\[
			M = \{a^{\alpha x + \beta y + \gamma} \# b^{\delta x + \epsilon y + \zeta}
			\mid (x, \ y) \in K\}
		\]
		is accepted by an EDT0L system that satisfies the conditions (2) and (3).
		Let \(((u_n, \ v_n))_n\) be the sequence of non-negative integer solutions
		to \(X^2 - DY^2 = 1\). Let \((x_0, \ y_0)\) be the fundamental solution in
		\(K\). Then we can write elements of \(K\) as \((x_n, \ y_n)\), where
		\[
			x_n = x_0 u_n + D y_0 v_n, \qquad y_n = y_0 u_n + x_0 v_n,
		\]
		for some \(n \in \mathbb{Z}_{\geq 0}\). For any \(n \in \mathbb{Z}_{\geq
		0}\),
		\begin{align*}
			\alpha x_n + \beta y_n + \gamma
			& = \alpha (x_0 u_n + D y_0 v_n) + \beta (y_0 u_n + x_0 v_n) + \gamma \\
			& = (\alpha x_0 + \beta y_0) u_n + (\alpha D y_0 + \beta x_0) v_n +
			\gamma, \\
			\delta x_n + \epsilon y_n + \zeta
			& = \delta (x_0 u_n + D y_0 v_n) + \epsilon (y_0 u_n + x_0 v_n) + \zeta\\
			& = (\delta x_0 + \epsilon y_0) u_n + (\delta D y_0 + \epsilon x_0) v_n
			+ \zeta \\
		\end{align*}
		Note that, by \cref{gen_pell_eqn_bound_lem} and \cref{pell_eqn_bound_lem}
		\begin{align*}
			\log(x_0) & = \frac{1}{2} \log \left(\frac{N(u_1 + 1)}{2} \right) \leq
			\log(N) + \log(u_1 + 1) \leq 2 + 3D + \log(N), \\
			\log(y_0) & \leq \log(v_1 \sqrt{N}) \leq 2 + 3D + \log(N).
		\end{align*}
		Thus
		\begin{align*}
			\log|\delta x_0 + \epsilon y_0| & \leq \log(|\delta| (x_0 + 1)) +
			\log(|\epsilon|(y_0 + 1)) \\
			& \leq \log|\delta| + \log|\epsilon| + \log(x_0) + \log(y_0) + 2 \\
			& \leq \log|\delta| + \log|\epsilon| + 2 + 6D + 2 \log(N), \\
			\log|\delta D y_0 + \epsilon x_0| & \leq \log(|\delta| D (y_0 + 1))
			+ \log(|\epsilon| (x_0 + 1)) \\
			& \leq \log|\delta| + \log|\epsilon| + \log(D) + \log(x_0) + \log(y_0) +
			2.
		\end{align*}
		Note that the above inequalities also hold with \(\delta\) replaced by
		\(\alpha\), and \(\epsilon\) replaced by \(\beta\). The result now follows
		from Lemma \ref{Pell_multiple_EDT0L_lem}.
	\end{proof}

	We now consider all solutions to a general Pell's equation. We start with a
	reduction from a non-primitive solution to a primitive solution.

	\begin{lem}
		\label{Pell_non_primitive_solns_lem}
		Let \((x, \ y) \in \mathbb{Z}_{\geq 0}^2\), and let \(k = \gcd(x, \ y)\).
		Then \((x, \ y)\) is a solution to the general Pell's equation \(X^2 - DY^2
		= N\) if and only if \(k^2 | N\), and \(\left(\frac{x}{k}, \ \frac{y}{k}
		\right)\) is a primitive solution to the general Pell's equation \(X^2 -
		DY^2 = \frac{N}{k^2}\).
	\end{lem}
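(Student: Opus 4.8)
The plan is a one-line algebraic manipulation: isolate the common factor $k$ from the equation. Set $x' = x/k$ and $y' = y/k$, so that $x = kx'$, $y = ky'$, and since $k = \gcd(x,y)$ we automatically have $\gcd(x',y') = 1$; in particular $(x',y')$ is primitive whenever it is a solution to anything. (If $x = y = 0$ then $k = 0$ and the quotients are undefined, but in that case neither side of the claimed equivalence holds, since $N \neq 0$ rules out $(0,0)$ being a solution; so we may assume $k \geq 1$.) The only fact doing any work is the identity $x^2 - Dy^2 = k^2(x'^2 - Dy'^2)$.

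For the forward implication, suppose $x^2 - Dy^2 = N$. Then $k^2(x'^2 - Dy'^2) = N$, which immediately gives $k^2 \mid N$ and $x'^2 - Dy'^2 = N/k^2$; hence $(x/k, y/k)$ is a solution to $X^2 - DY^2 = N/k^2$, and it is primitive because $\gcd(x',y') = 1$. Conversely, if $k^2 \mid N$ and $(x/k, y/k)$ is a primitive solution to $X^2 - DY^2 = N/k^2$, then $x'^2 - Dy'^2 = N/k^2$; multiplying through by $k^2$ recovers $(kx')^2 - D(ky')^2 = x^2 - Dy^2 = N$, so $(x,y)$ solves $X^2 - DY^2 = N$.

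I do not expect any genuine obstacle here: the content is entirely the factorisation $x^2 - Dy^2 = k^2(x'^2 - Dy'^2)$ together with the definition of the gcd. The only points worth a word of care are the degenerate pair $(0,0)$ and the observation that primitivity of $(x/k, y/k)$ is forced by $k = \gcd(x,y)$, so in the converse direction this is automatic rather than an extra hypothesis one must exploit.
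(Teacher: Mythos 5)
Your proposal is correct and follows essentially the same route as the paper, which simply divides the identity \(x^2 - Dy^2 = N\) by \(k^2\) to get \(\bigl(\frac{x}{k}\bigr)^2 - D\bigl(\frac{y}{k}\bigr)^2 = \frac{N}{k^2}\). Your extra remarks on the degenerate pair \((0,0)\) and on primitivity being automatic from \(k = \gcd(x,y)\) are sensible points of care that the paper leaves implicit, but they do not change the argument.
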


	\begin{proof}
		We have \(x^2 - Dy^2 = N\) if and only if
		\[
			\frac{N}{k^2} = \frac{x^2 - Dy^2}{k^2} = \left(\frac{x}{k}\right)^2
			- D \left( \frac{y}{k} \right)^2. \qedhere
		\]
	\end{proof}

	It is now possible to generalise Lemma \ref{gen_pell_eqn_primitive_EDT0L_lem}
	to all solutions to a general Pell's equation.

	\begin{lem}
		\label{gen_pell_eqn_EDT0L_lem}
		Let \(S\) be the set of all solutions to the general Pell's
		equation \(X^2 - DY^2 = N\), and \(\alpha, \ \beta, \ \gamma, \ \delta, \
		\epsilon, \ \zeta \in \mathbb{Z}\). Then
		\begin{enumerate}
			\item The language \(L = \{a^{\alpha x + \beta y + \gamma} \# b^{\delta x
			+ \epsilon y + \zeta} \mid (x, \ y) \in S\}\) is EDT0L;
			\item A \(\#\)-separated EDT0L system \(\mathcal H\) for \(L\) is
			constructible in \(\ns(fg)\), where \(f\) is logarithmic in
			\(\max(|\alpha|, \ |\beta|, \ |\gamma|, \ |\delta|, \ |\epsilon|, \
			|\zeta|, \ |N|, \ D)\), and \(g\) is linear in \(D\);
			\item The system \(\mathcal H\) is \(h_1 h_2\)-bounded, where \(h_1\) is
			linear in \(\max(|\alpha|, \ |\beta|, \ |\gamma|, \ |\delta|, \
			|\epsilon|, \ |\zeta|, \ |N|, \ D)\), and \(h_2\) is exponential in \(D\).
		\end{enumerate}
	\end{lem}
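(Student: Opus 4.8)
The plan is to bootstrap from the primitive case, which is exactly Lemma~\ref{gen_pell_eqn_primitive_EDT0L_lem}. By Lemma~\ref{Pell_non_primitive_solns_lem}, a non-negative pair $(x,y)$ with $k=\gcd(x,y)$ solves $X^2-DY^2=N$ if and only if $k^2\mid N$ and $(x/k,\ y/k)$ is a primitive solution of $X^2-DY^2=N/k^2$. Since $N\neq 0$, the divisibility $k^2\mid N$ forces $k\le\sqrt{|N|}$, so there are only finitely many admissible $k$, and they can be listed by testing $k=1,\dots,\lfloor\sqrt{|N|}\rfloor$. Writing $S_k$ for the set of primitive solutions of $X^2-DY^2=N/k^2$, this gives the decomposition $S=\bigsqcup_{k^2\mid N}\{(kx',\ ky') : (x',y')\in S_k\}$, which is disjoint because $k$ is recovered as $\gcd$. (If $S$ is intended to include solutions with negative coordinates, one first peels off the at most four sign patterns $(\pm x,\pm y)$; each only flips the signs of $\alpha,\beta,\delta,\epsilon$, so this is a harmless extra finite union, and I treat the non-negative case below.)

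Next I would substitute $x=kx'$, $y=ky'$ into the two linear forms, so $\alpha x+\beta y+\gamma=(\alpha k)x'+(\beta k)y'+\gamma$ and $\delta x+\epsilon y+\zeta=(\delta k)x'+(\epsilon k)y'+\zeta$. Hence
\[
	L=\bigcup_{k^2\mid N}\bigl\{\, a^{(\alpha k)x'+(\beta k)y'+\gamma}\ \#\ b^{(\delta k)x'+(\epsilon k)y'+\zeta} : (x',y')\in S_k \,\bigr\}.
\]
Each language in this finite union is, by Lemma~\ref{gen_pell_eqn_primitive_EDT0L_lem} applied to the general Pell's equation $X^2-DY^2=N/k^2$ with coefficient tuple $(\alpha k,\beta k,\gamma,\delta k,\epsilon k,\zeta)$, accepted by a $\#$-separated EDT0L system with the stated boundedness and constructibility. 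Applying Lemma~\ref{hash_sep_union_lem} over the finitely many $k$ then yields a single $\#$-separated EDT0L system for $L$, which gives part~(1).

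For parts~(2) and~(3) I would push the parameters through these two steps, and this is where the real care is needed. The scaled coefficients satisfy $|\alpha k|,\dots,|\epsilon k|\le\max(|\alpha|,\dots,|\epsilon|)\sqrt{|N|}$ and $|N/k^2|\le|N|$, so feeding them into Lemma~\ref{gen_pell_eqn_primitive_EDT0L_lem} keeps the ``$f$'' exponent logarithmic and the ``$g$'' exponent linear in $D$; using $\log\sqrt{|N|}\le\log|N|$, the logarithmic estimates are unaffected, and the product-form bounds $h_1h_2$ absorb the factor $k\le\sqrt{|N|}$ into the dependence on $|N|$. Enumerating the admissible $k$ costs only $O(\log|N|)$ additional space, and since there are at most $\sqrt{|N|}$ of them (in fact only $d(N)=|N|^{o(1)}$), the repeated use of Lemma~\ref{hash_sep_union_lem} can be organised so that the resulting extended alphabet and endomorphism images stay within the claimed $h_1h_2$ bound while the whole construction remains in $\ns(fg)$.

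The main obstacle is precisely this last piece of bookkeeping: the only genuinely nontrivial check is that the blow-up coming from the number of $\gcd$-classes and from iterating the union lemma is absorbed by the (deliberately generous, product-form) bounds of the statement, rather than breaking them; everything else is a direct substitution into results already established.
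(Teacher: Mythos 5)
Your proposal is correct and follows essentially the same route as the paper: reduce to non-negative solutions via the four sign patterns, use Lemma~\ref{Pell_non_primitive_solns_lem} to write solutions as $k$-multiples of primitive solutions to $X^2-DY^2=N/k^2$, apply Lemma~\ref{gen_pell_eqn_primitive_EDT0L_lem} with the scaled coefficients $(\alpha k,\beta k,\gamma,\delta k,\epsilon k,\zeta)$, and take the finite union with Lemma~\ref{hash_sep_union_lem} while noting that $\log(\alpha k)$ etc.\ stay within the logarithmic bound and that enumerating the admissible $k$ costs only $O(\log|N|)$ space. The iterated-union bookkeeping you flag as the remaining obstacle is treated no more carefully in the paper itself, so your account matches the published argument (your bound $k\le\sqrt{|N|}$ is even slightly sharper than the paper's $k<N$).
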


	\begin{proof}
		First note that the following are equivalent:
		\begin{enumerate}
			\item \((x, \ y) \in S\);
			\item \((x, \ -y) \in S\);
			\item \((-x, \ y) \in S\);
			\item \((-x, \ -y) \in S\).
		\end{enumerate}
		Since we can use Lemma \ref{hash_sep_union_lem} to take finite
		unions of EDT0L languages, and preserve space complexity of EDT0L systems,
		it therefore suffices to show that
		\[
			M = \{a^{\alpha x + \beta y + \gamma} \# b^{\delta x + \epsilon y + \zeta}
			\mid (x, \ y) \in \text{ is a non-negative integer solution to } X^2 -
			DY^2 = N\}
		\]
		is accepted by an EDT0L system that satisfies (2) and (3). Using
		\cref{Pell_non_primitive_solns_lem}, all non-negative integer solutions to
		\(X^2 - DY^2 = N\) are of the form \((xk, \ yk)\) where \((x, \ y)\) is a
		primitive solution to \(X^2 - DY^2 = \frac{N}{k^2}\), for some \(k\) such
		that \(k^2 | N\). Moreover, if \((x, \ y)\) is a primitive solution to \(X^2
		- DY^2 = \frac{N}{k^2}\), then \((xk, \ yk)\) is a solution to \(X^2 - DY^2
		= N\). We will therefore show two claims:
		\begin{enumerate}
			\item The language \(M_k = \{a^{\alpha k x + \beta k y + \gamma} \#
			b^{\delta kx + \epsilon ky + \zeta} \mid (x, \ y) \text{ is a primitive
			solution to } X^2 - DY^2 = \frac{N}{k^2}\}\) is EDT0L for all \(k \in
			\mathbb{Z}_{> 1}\) such that \(k^2 | N\);
			\item The union of the languages \(M_k\) is EDT0L, and accepted by a
			\(\#\)-separated EDT0L system \(\mathcal H\) that satisfies (2) and (3);
			\item The extended alphabet and endomorphisms in \(\mathcal H\) satisfy
			the conditions in (3).
		\end{enumerate}
		Since \(M\) equals this union, the result follows.

		First note that if \(k \in \mathbb{Z}_{\geq 2}\) is such that \(k^2 | N\),
		then \(k < N\). Thus \(\log(\alpha k) = \log(\alpha) + \log(k) \leq
		\log(\alpha) + \log(N)\). If we use \(\beta\), \(\delta\) or \(\epsilon\) in
		place of \(\alpha\), this inequality will still hold. Thus the first claim
		follows from \cref{gen_pell_eqn_primitive_EDT0L_lem}.

		For the second claim, we can apply \cref{hash_sep_union_lem} repeatedly,
		once for each \(k \in \mathbb{Z}_{\geq 2}\) such that \(k^2 | N\). We need
		to do this for all such \(k\). This could be done by cycling through all \(k
		\in \{2, \ \ldots, \ N - 1\}\), checking if \(k^2 | N\), and then applying
		the lemma in those cases. We would need to store the `current' \(k\) to do
		this, which would use at most \(\log(N)\) bits. \qedhere

	\end{proof}

	%

	Before attempting to tackle the general two-variable quadratic equations, we
	mention the result we can obtain so far for a general Pell's equation. The
	space complexity in this case is log-linear, which is better than the
	log-quartic space complexity we have for arbitrary two-variable quadratic
	equations.

	\begin{proposition}
		The solution language to the general Pell's equation \(X^2 - DY^2 = N\)
		is EDT0L, accepted by an EDT0L system that is constructible in
		non-deterministic log-linear space, with \(\max(D, \ |N|)\) as the input
		size.
	\end{proposition}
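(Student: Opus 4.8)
The plan is to read this off directly from Lemma~\ref{gen_pell_eqn_EDT0L_lem}, by taking the two linear forms that simply pick out the coordinates $x$ and $y$, and then collapsing the two output letters into one via a free monoid homomorphism.

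First I would apply Lemma~\ref{gen_pell_eqn_EDT0L_lem} with $\alpha = 1$, $\epsilon = 1$ and $\beta = \gamma = \delta = \zeta = 0$. Writing $S$ for the set of all solutions to $X^2 - DY^2 = N$, this gives that
\[
	L = \{a^{x} \# b^{y} \mid (x, \ y) \in S\}
\]
is EDT0L, with a $\#$-separated EDT0L system $\mathcal H$ for $L$ constructible in $\ns(fg)$, where $f$ is logarithmic in $\max(|\alpha|, \ |\beta|, \ |\gamma|, \ |\delta|, \ |\epsilon|, \ |\zeta|, \ |N|, \ D) = \max(1, \ |N|, \ D)$ and $g$ is linear in $D$. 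Taking the input size to be $n = \max(D, \ |N|)$, we have $\max(1, \ |N|, \ D) \leq \max(n, \ 1)$, so $f$ is $O(\log n)$, and $D \leq n$, so $g$ is $O(n)$. Hence $fg$ is $O(n \log n)$, i.e. $\mathcal H$ is constructible in non-deterministic log-linear space.

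It remains to pass from $L$ to the solution language, which over the alphabet $\{a, \ a^{-1}, \ \#\}$ is
\[
	\{x \mu \# y \mu \mid (x, \ y) \in S\} = \{a^{x} \# a^{y} \mid (x, \ y) \in S\}.
\]
Let $\pi$ be the free monoid homomorphism that fixes $a$, $a^{-1}$ and $\#$ and sends $b \mapsto a$ and $b^{-1} \mapsto a^{-1}$. Then $L \pi$ is exactly this solution language, and $\pi$ can be written down in constant space, so by Lemma~\ref{EDT0L_closure_properties_lem}(5) the solution language is EDT0L and admits an EDT0L system constructible in $\ns(fg)$, hence in non-deterministic log-linear space.

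I do not expect a genuine obstacle here: the substantive work — the reduction of an arbitrary solution to a primitive one, the equivalence classes of primitive solutions and their bounded fundamental solutions, the reduction to Pell's equation, and the recurrence-relation analysis behind Lemma~\ref{gen_fibonacci_EDT0L_lem} — has already been carried out in Lemmas~\ref{gen_pell_eqn_EDT0L_lem}, \ref{gen_pell_eqn_primitive_EDT0L_lem} and \ref{Pell_multiple_EDT0L_lem}. The only point that needs checking is the specialisation of the space bound: since the coefficients of the two linear forms are all $0$ or $1$, the logarithmic factor $f$ from Lemma~\ref{gen_pell_eqn_EDT0L_lem} collapses to a function of $\max(|N|, \ D)$ alone, and the lemma's linear-in-$D$ factor is linear in the input size, so their product is genuinely log-linear (rather than, say, log-quadratic), which is what gives the sharper bound advertised in the statement.
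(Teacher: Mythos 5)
Your proposal is essentially the paper's own proof: apply Lemma~\ref{gen_pell_eqn_EDT0L_lem} with constant coefficients, then use Lemma~\ref{EDT0L_closure_properties_lem}(5) to map \(b \mapsto a\), with the log-linear bound coming from \(f\) logarithmic and \(g\) linear in \(\max(D, \ |N|)\). Your coefficient choice \(\alpha = \epsilon = 1\), \(\beta = \gamma = \delta = \zeta = 0\) is in fact the correct specialisation (the paper states \(\alpha = \beta = \delta = \epsilon = 1\), which would give \(a^{x+y} \# b^{x+y}\) and appears to be a slip), so your version is, if anything, cleaner.
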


	\begin{proof}
		This follows by first taking \(\alpha = \beta = \delta = \epsilon = 1\)
		and \(\gamma = \zeta = 0\) in Lemma \ref{gen_pell_eqn_EDT0L_lem}, and then
		applying a free monoid homomorphism that maps \(b\) to \(a\), using Lemma
		\ref{EDT0L_closure_properties_lem}.
	\end{proof}

	In order to understand the solutions to a generic two-variable quadratic
	equation, we must first know the solutions to the equation \(X^2 + DY^2 = N\),
	where \(N, \ D \in \mathbb{Z}\). Whilst we have considered the `hardest' case
	of \(D < 0\), \(-D\) non-square and \(N \neq 0\), it remains to consider the
	remaining cases. We start with the case when \(D \geq 0\).

	\begin{lem}
		\label{negative_D_Pell_EDT0L_lem}
		Let \(S\) be the set of all solutions to the equation \(X^2 + DY^2 = N\),
		with \(N \in \mathbb{Z}\), \(D \in \mathbb{Z}_{\geq 0}\), and \(\alpha, \
		\beta, \ \gamma, \ \delta, \ \epsilon, \ \zeta \in \mathbb{Z}\). Then
		\begin{enumerate}
			\item The language \(L = \{a^{\alpha x + \beta y + \gamma} \# b^{\delta x
			+ \epsilon y + \zeta} \mid (x, \ y) \in S\}\) is EDT0L;
			\item A \(\#\)-separated EDT0L system \(\mathcal H\) for \(L\) is
			constructible in \(\ns(f)\), where \(f\) is logarithmic in
			\(\max(|\alpha|, \ |\beta|, \ |\gamma|, \ |\delta|, \ |\epsilon|, \
			|\zeta|, \ |N|, \ D)\);
			\item The system \(\mathcal H\) is \(h\)-bounded, where \(h\) is linear in
			\(\max(|\alpha|, \ |\beta|, \ |\gamma|, \ |\delta|, \ |\epsilon|, \
			|\zeta|, \ |N|, \ D)\).
		\end{enumerate}
	\end{lem}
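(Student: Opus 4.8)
The plan is to split according to the sign of $N$ and the value of $D$. If $N<0$, or if $D=0$ and $N$ is not a perfect square, the solution set is empty; if $D\ge 1$ and $N\ge 0$ it is finite; and the only substantive case is $D=0$ with $N$ a perfect square, where a single free variable makes $S$ infinite. Throughout, write $I=\max(|\alpha|,|\beta|,|\gamma|,|\delta|,|\epsilon|,|\zeta|,|N|,D)$ for the input size.

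\emph{The empty and finite cases.} If $N<0$ then $X^2+DY^2\ge 0>N$, so $S=\emptyset$ and $L=\emptyset$; the empty language is accepted by a trivial $\#$-separated system (empty rational control over the empty set of endomorphisms), constructible in constant space and $h$-bounded for any $h$. If $D\ge 1$ and $N\ge 0$, then every solution satisfies $|x|\le\sqrt{N}$ and $|y|\le\sqrt{N/D}\le\sqrt{N}$, so $S$ is finite. I would build a $\#$-separated EDT0L system directly, as for the finite language $Q$ in the proof of \cref{Pell_multiple_EDT0L_lem}: the rational control ranges over all pairs $(x,y)$ with $|x|,|y|\le\sqrt{N}$, tests $x^2+Dy^2=N$, and for each solution outputs $a^{\alpha x+\beta y+\gamma}\,\#\,b^{\delta x+\epsilon y+\zeta}$. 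The difference from \cref{Pell_multiple_EDT0L_lem} is that here the solutions have size only $O(\sqrt{|N|})$ in the input, so no exponential factor appears; and to obtain an $h$-bounded system with $h$ \emph{linear} in $I$ rather than merely polynomial, I would not emit each word as one long endomorphism image but build it incrementally over $O(\sqrt{|N|})$ rationally-controlled steps — appending $a^{\pm\alpha}b^{\pm\delta}$ a total of $|x|$ times, then $a^{\pm\beta}b^{\pm\epsilon}$ a total of $|y|$ times, then a constant-size finishing endomorphism absorbing $\gamma,\zeta$ and deleting the auxiliary letters. Each image then has length $O(I)$, the extended alphabet has constant size, and since testing the equation and maintaining the counters takes $O(\log I)$ bits the system is constructible in $\ns(f)$ with $f$ logarithmic in $I$.

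\emph{The case $D=0$.} Here the equation is $X^2=N$. If $N$ is not a perfect square then $S=\emptyset$, as above. Otherwise $N=m^2$ with $m\in\mathbb{Z}_{\ge 0}$ and $S=\{(m,y),(-m,y)\mid y\in\mathbb{Z}\}$. By \cref{hash_sep_union_lem} it suffices to treat each of the finitely many pieces of $L$ obtained by fixing $x=x_0\in\{m,-m\}$ and restricting $y$ to $y\ge 0$ or to $y<0$ (the latter via $y\mapsto -y'$, which only changes the signs of the coefficients of $y$). Each such piece has the form $\{a^{\beta' y+c_1}\,\#\,b^{\epsilon' y+c_2}\mid y\ge 0\}$ for integers $\beta',\epsilon',c_1,c_2$ computable from the data (with $c_1=\pm\alpha m+\gamma$, $c_2=\pm\delta m+\zeta$). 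Post-composing the rational control with the sign-flip endomorphisms $a\leftrightarrow a^{-1}$ and $b\leftrightarrow b^{-1}$ (as in the proof of \cref{EDT0L_division_lem}; note each flip negates both the coefficient of $y$ and the constant term in its coordinate) reduces to $\beta',\epsilon'\ge 0$; splitting off the finitely many $y$ for which $\beta' y+c_1<0$ or $\epsilon' y+c_2<0$ (a finite piece, handled as above) and shifting $y$ reduces further to $\beta',\epsilon',c_1,c_2\ge 0$. Now the sequences $p_y=\beta' y+c_1$ and $q_y=\epsilon' y+c_2$ each satisfy a linear recurrence with non-negative coefficients and non-negative initial data — augment each with the constant auxiliary sequence $\beta'$, respectively $\epsilon'$ — so \cref{gen_fibonacci_EDT0L_lem} yields EDT0L systems for $\{a^{p_y}\}$ and $\{b^{q_y}\}$ whose rational controls have the form $\theta\varphi^\ast\psi$; these combine into a $\#$-separated system for $\{a^{p_y}\,\#\,b^{q_y}\mid y\ge 0\}$ in exactly the way the two one-letter systems are combined at the end of the proof of \cref{Pell_multiple_EDT0L_lem}. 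As in the finite case, the constants $c_1,c_2$ should themselves be realised incrementally over $O(m)=O(\sqrt{|N|})$ steps, so that the resulting system is $h$-bounded with $h$ linear in $I$. The space and boundedness estimates of \cref{gen_fibonacci_EDT0L_lem} and \cref{hash_sep_union_lem}, applied to the finitely many unions, then give (2) and (3).

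\emph{The main obstacle.} The only genuinely infinite case is $D=0$ with $N$ a perfect square, and this is where the real work lies: the free variable $Y$ forces us to produce one EDT0L system emitting the $a$- and $b$-words synchronously, which in turn forces the sign analysis (splitting the range of $y$, flipping signs, peeling off a finite remainder) and the appeal to the recurrence machinery of \cref{gen_fibonacci_EDT0L_lem}. Keeping the $h$-bound linear rather than merely polynomial — via the incremental constructions mentioned above, since solutions of size $\Theta(\sqrt{|N|})$ multiplied by the coefficients would otherwise give images of size $\Theta(I^{3/2})$ — is the other point requiring care; everything else reduces to a finite or empty language.
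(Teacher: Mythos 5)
Your proposal is correct, and in the cases the paper actually treats it coincides with the paper's argument: for $N<0$ the language is empty, and for finitely many solutions one takes a singleton $\#$-separated system per solution (a single endomorphism sending $\perp_1\mapsto a^{\alpha x+\beta y+\gamma}$, $\perp_2\mapsto b^{\delta x+\epsilon y+\zeta}$) and applies \cref{hash_sep_union_lem}. Where you genuinely diverge is the case $D=0$ with $N$ a perfect square (including $N=0$): there $Y$ is free and $S$ is infinite, so you split off the two values $x=\pm\sqrt{N}$, normalise signs, peel off a finite head, and feed the arithmetic progressions $\beta'y+c_1$, $\epsilon'y+c_2$ into \cref{gen_fibonacci_EDT0L_lem}, synchronising the two coordinates exactly as in \cref{Pell_multiple_EDT0L_lem}. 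The paper's proof does not do this: it asserts that for $N\geq 0$ every solution satisfies $|x|+|y|\leq N$ and treats $S$ as finite, which is false when $D=0$ and $N$ is a square, even though the lemma is stated for $D\in\mathbb{Z}_{\geq 0}$ and is invoked in \cref{quad_eqn_2_var_integers_EDT0L_thm} precisely in the non-positive-discriminant case, where $D=0$ can occur. So your extra case is not optional padding; it repairs a real omission, and your machinery (sign-splitting plus the recurrence lemma, with a constant auxiliary sequence) is the natural way to do it within the paper's toolkit. Your second refinement --- emitting the words incrementally over $O(\sqrt{|N|})$ rationally controlled steps so that images of letters stay of length $O(I)$ and the system is $h$-bounded for $h$ genuinely linear --- addresses a point the paper passes over with the bare assertion that the singleton systems "satisfy (2) and (3)"; a one-shot endomorphism would only give a polynomial (roughly $I^{3/2}$ or $I^2$) bound. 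The only place to tighten your write-up is the reduction to $\beta',\epsilon',c_1,c_2\geq 0$: when $\beta'=0$ (or $\epsilon'=0$) the set of $y$ with $\beta'y+c_1<0$ is not finite, so the sign flip, not the finite-head argument, must be what makes that coordinate non-negative; your sketch permits this but should say it explicitly.
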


	\begin{proof}
		If \(N < 0\), there is nothing to prove, as the equation has no solutions.
		So suppose \(N \geq 0\). Then all solutions \((x, \ y)\) to this equation
		satisfy \(|x| + |y| \leq N\). Let \((x, \ y)\) be such a solution. Then
		\(\{a^{\alpha x + \beta y + \gamma} \# b^{\delta x + \epsilon y + \zeta}\}\)
		is accepted by the EDT0L system \((\{a, \ a^{-1}, \ b, \ b^{-1}, \ \#\}, \
		\{a, \ a^{-1}, \ b, \ b^{-1}, \ \#\} \cup \{\perp_1, \ \perp_2\}, \ \perp_1
		\# \perp_2, \ \varphi)\), where \(\varphi\) is defined by
		\[
			c \varphi = \left\{
				\begin{array}{cl}
					a^{\alpha x + \beta y + \gamma} & c = \perp_1 \\
					b^{\delta x + \epsilon y + \zeta} & c = \perp_2 \\
					c & \text{otherwise}.
				\end{array}
			\right.
		\]
		We have that this EDT0L system satisfies the conditions stated in (2) and
		(3). Thus we can use Lemma \ref{hash_sep_union_lem} to obtain the result.
	\end{proof}

	We now consider the solutions to the equation \(X^2 - DY^2 = N\) when \(D\)
	is square.

	\begin{lem}
		\label{sqaure_D_Pell_EDT0L_lem}
		Let \(S\) be the set of all solutions to the equation \(X^2 - DY^2 = N\),
		with \(N \in \mathbb{Z}\), \(D \in \mathbb{Z}_{\geq 0}\), such that
		\(D\) is a perfect square. Let \(\alpha, \
		\beta, \ \gamma, \ \delta, \ \epsilon, \ \zeta \in \mathbb{Z}\). Then
		\begin{enumerate}
			\item The language \(L = \{a^{\alpha x + \beta y + \gamma} \# b^{\delta x
			+ \epsilon y + \zeta} \mid (x, \ y) \in S\}\) is EDT0L;
			\item A \(\#\)-separated EDT0L system \(\mathcal H\) for \(L\) is
			constructible in \(\ns(f)\), where \(f\) is logarithmic in
			\(\max(|\alpha|, \ |\beta|, \ |\gamma|, \ |\delta|, \ |\epsilon|, \
			|\zeta|, \ |N|, \ D)\);
			\item The system \(\mathcal H\) is \(h\)-bounded, where \(h\) is linear in
			\(\max(|\alpha|, \ |\beta|, \ |\gamma|, \ |\delta|, \ |\epsilon|, \
			|\zeta|, \ |N|, \ D)\).
		\end{enumerate}
	\end{lem}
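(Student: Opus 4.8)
The plan is to exploit the factorisation $X^2 - DY^2 = (X - \sqrt{D}\,Y)(X + \sqrt{D}\,Y)$, which is genuinely available here since $D$ is a perfect square; write $D = d^2$ with $d \in \mathbb{Z}_{\geq 0}$. If $d \geq 1$ and $N \neq 0$, then for any solution $(x,y)$ the integer $e := x - dy$ divides $N$ and determines $(x,y)$ via $x = \tfrac12(e + N/e)$, $y = \tfrac1{2d}(N/e - e)$; hence $S$ is finite, with at most $2|N|$ members, each of absolute value at most $|N|$. One enumerates the divisors $e$ of $N$ using a single $O(\log|N|)$-bit counter, retains those giving integer $(x,y)$, and builds one $\#$-separated EDT0L system with start word $\perp_1 \# \perp_2$ and finite rational control $\{\pi_{(x,y)} : (x,y) \in S\}$, where $\pi_{(x,y)}$ sends $\perp_1 \mapsto a^{\alpha x + \beta y + \gamma}$, $\perp_2 \mapsto b^{\delta x + \epsilon y + \zeta}$, fixes $\#$, and fixes the terminal letters — exactly as in the proof of \cref{negative_D_Pell_EDT0L_lem}. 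The exponents, hence the images, are bounded linearly (so writable in logarithmic space) in $\max(|\alpha|,\dots,|\zeta|,|N|)$, giving (2) and (3) in this case.

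It remains to treat $d = 0$, and $d \geq 1$ with $N = 0$. If $d = 0$ the equation is $X^2 = N$: if $N < 0$ or $N$ is not a perfect square then $S = \emptyset$ and $L = \emptyset$ is trivially EDT0L, and otherwise $N = m^2$ with $m = \sqrt{N} \leq N$, so $S = \{(m,t) : t \in \mathbb{Z}\} \cup \{(-m,t) : t \in \mathbb{Z}\}$. If $d \geq 1$ and $N = 0$, then $x^2 = d^2 y^2$ forces $x = \pm dy$, so $S = \{(dt,t) : t \in \mathbb{Z}\} \cup \{(-dt,t) : t \in \mathbb{Z}\}$ with $d = \sqrt{D} \leq D$. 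In every remaining case $S$ is a union of at most two sets of the shape $\{(pt+q,\;rt+s) : t \in \mathbb{Z}\}$ with $p,q,r,s$ computable, and by \cref{hash_sep_union_lem} it suffices to handle one such set; substituting, the corresponding sublanguage of $L$ is $\{a^{At+B}\,\#\,b^{Ct+E} : t \in \mathbb{Z}\}$ with $A = \alpha p + \beta r$, $B = \alpha q + \beta s + \gamma$, $C = \delta p + \epsilon r$, $E = \delta q + \epsilon s + \zeta$.

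To handle $\{a^{At+B}\,\#\,b^{Ct+E} : t \in \mathbb{Z}\}$, split it as the $t \geq 0$ part together with the $t \leq -1$ part; the substitution $t \mapsto -1-t$ turns the latter into another $t \geq 0$ problem, so it suffices to deal with $t \geq 0$. Choose a computable $T \in \mathbb{Z}_{\geq 0}$ (of size $O(|B/A| + |E/C|)$, interpreting terms with vanishing denominator as $0$) so that neither $At+B$ nor $Ct+E$ changes sign for $t \geq T$; this is precisely where the paper's recurring ``subtraction'' difficulty is localised, and it is dealt with by excising finitely many initial terms. The terms $t \in \{0,\dots,T-1\}$ give finitely many explicit words, handled just as in the first paragraph. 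For the tail, put $t = T+u$: then $a^{At+B} = a^{Au+B'}$ with $A$ and $B'$ either both $\geq 0$ or both $\leq 0$, and likewise $b^{Cu+E'}$. The sequence $(Au + B')_{u \geq 0}$ fits \cref{gen_fibonacci_EDT0L_lem}: take the recurrence $p_u = p_{u-1} + q_{u-1}$, $q_u = q_{u-1}$, $r_u = 0$ with $p_0 = B'$, $q_0 = A$, $r_0 = 0$, which has non-negative coefficients and initial data of one sign, so \cref{gen_fibonacci_EDT0L_lem} yields an EDT0L system of the form $\theta\varphi^\ast\psi$ for $\{a^{Au+B'} : u \geq 0\}$, constructible in logarithmic space and boundedly (linearly in $|A|+|B'|$); similarly for $\{b^{Cu+E'} : u \geq 0\}$. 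Composing these two systems in parallel — disjoint extended alphabets, adjoined $\#$, rational control $\theta_1\theta_2(\varphi_1\varphi_2)^\ast\psi_1\psi_2$ — exactly as in the proof of \cref{Pell_multiple_EDT0L_lem}, gives a $\#$-separated EDT0L system for the tail sublanguage. Assembling the at most two linear families, their $t \geq 0$ and $t \leq -1$ halves, the finite initial segments, and (when $N \neq 0$) the finite solution set, all via \cref{hash_sep_union_lem}, produces the required $\mathcal H$ with the complexity bounds in (2) and (3) following as in \cref{negative_D_Pell_EDT0L_lem} and \cref{Pell_multiple_EDT0L_lem}.

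The one genuine step is the analysis of the cases $d = 0$ and $d \geq 1, N = 0$: recognising that these solution sets are finite unions of arithmetic ``lines'' in $\mathbb{Z}^2$, and then pushing such a line through the \cref{gen_fibonacci_EDT0L_lem}-plus-parallel-composition machinery while controlling signs by removing a computable finite prefix. Everything else — the divisor enumeration when $N \neq 0$, and the repeated use of \cref{hash_sep_union_lem} to keep the system $\#$-separated while tracking its size — is bookkeeping that mirrors the earlier lemmas.
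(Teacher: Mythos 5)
Your proposal is correct, and on the case the paper's own proof actually treats --- \(D = E^2\) with \(E \geq 1\) and \(N \neq 0\) --- it is essentially the same argument: factor \(X^2 - E^2Y^2\), note that each solution is determined by a divisor of \(N\), conclude that \(S\) is finite with entries bounded by \(|N|\), and realise \(L\) as a finite union of singleton languages (one endomorphism per solution) glued by \cref{hash_sep_union_lem}, exactly as in \cref{negative_D_Pell_EDT0L_lem}. Where you genuinely diverge is in the degenerate cases \(D = 0\) (with \(N\) a perfect square) and \(N = 0\) with \(E \geq 1\): there \(S\) is infinite, a union of at most two lines \(\{(pt+q, \ rt+s) \mid t \in \mathbb{Z}\}\), and the paper's proof does not address this --- its claim that all solutions satisfy \(|x| \leq |N|\), \(|y| \leq |N|\) fails in exactly these cases, and its \(N=0\) lemma (\cref{0_N_Pell_EDT0L_lem}) simply refers back to this one, so an argument of the kind you give is needed somewhere. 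Your treatment --- splitting each line into \(t \geq 0\) and \(t \leq -1\), removing a computable finite prefix so both linear forms have constant sign, feeding the resulting one-signed arithmetic progressions into \cref{gen_fibonacci_EDT0L_lem} via the recurrence \(p_u = p_{u-1} + q_{u-1}\), \(q_u = q_{u-1}\), and running the two coordinates in parallel as in \cref{Pell_multiple_EDT0L_lem} --- is sound, uses only machinery already established earlier in the paper, and buys a proof covering the full stated hypotheses at the cost of being longer than the paper's finiteness argument. The complexity claims in (2) and (3) are argued at the same level of precision as the paper's own (both describe as ``linear'' image lengths that are literally products of two input quantities, e.g.\ \(|\alpha| \cdot |N|\)), so there is no gap there beyond what the paper itself tolerates.
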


	\begin{proof}
		As \(D\) is square, we have that \(D = E^2\) for some \(E \in
		\mathbb{Z}_{\geq 0}\). Define a new variable \(V = EY\). Substituting this
		into \(X^2 - DY^2 = N\) gives \(X^2 - V^2 = N\), which is again equivalent
		to \((X - V)(X + V) = N\). If \((x, \ v)\) is a solution, then \(x + v\) and
		\(x - v\) must both divide \(N\), and thus \(|x + v| \leq |N|\) and \(|x -
		v| \leq |N|\). It follows that there are finitely many solutions \((x, \
		v)\) to \(X^2 - V^2 = N\), all of which satisfy \(|x| \leq |N|\) and \(|v|
		\leq N\). Thus there are finitely many solutions \((x, \ y)\) to \(X^2 -
		D Y^2 = N\), all of which satisfy \(|x| \leq |N|\) and \(|y| \leq |V| \leq
		|N|\).

		We can use the same argument we used in Lemma
		\ref{negative_D_Pell_EDT0L_lem}, to show that each of the singleton
		languages \(\{a^{\alpha x + \beta y + \gamma} \# b^{\delta x + \epsilon y +
		\zeta}\}\) are accepted by EDT0L systems satisfying the conditions in (2)
		and (3). We can again use Lemma \ref{hash_sep_union_lem} to union these to
		form \(L\).
	\end{proof}

	We finally need to consider the case when \(N = 0\).

	\begin{lem}
		\label{0_N_Pell_EDT0L_lem}
		Let \(S\) be the set of all solutions to the equation \(X^2 - DY^2 = 0\),
		with \(D \in \mathbb{Z}_{\geq 0}\), and \(\alpha, \
		\beta, \ \gamma, \ \delta, \ \epsilon, \ \zeta \in \mathbb{Z}\). Then
		\begin{enumerate}
			\item The language \(L = \{a^{\alpha x + \beta y + \gamma} \# b^{\delta x
			+ \epsilon y + \zeta} \mid (x, \ y) \in S\}\) is EDT0L;
			\item A \(\#\)-separated EDT0L system \(\mathcal H\) for \(L\) is
			constructible in \(\ns(f)\), where \(f\) is logarithmic in
			\(\max(|\alpha|, \ |\beta|, \ |\gamma|, \ |\delta|, \ |\epsilon|, \
			|\zeta|, \ D)\);
			\item The system \(\mathcal H\) is \(h\)-bounded, where \(h\) is linear in
			\(\max(|\alpha|, \ |\beta|, \ |\gamma|, \ |\delta|, \ |\epsilon|, \
			|\zeta|, \ |N|, \ D)\).
		\end{enumerate}
	\end{lem}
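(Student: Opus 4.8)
The plan is to exploit that the solution set \(S\) of \(X^2 - DY^2 = 0\) is trivial to describe. If \(D > 0\) is not a perfect square, then \(X^2 = DY^2\) forces \(Y \mid X\); writing \(X = kY\) gives \(k^2 = D\), a contradiction unless \(X = Y = 0\), so \(S = \{(0,0)\}\), \(L = \{a^{\gamma} \# b^{\zeta}\}\) is a single word, accepted by the one-endomorphism \(\#\)-separated system used in the proof of \cref{negative_D_Pell_EDT0L_lem}, with the claimed bounds immediate. If \(D = 0\) then \(S = \{(0,y) \mid y \in \mathbb{Z}\}\), and if \(D = E^2\) with \(E \in \mathbb{Z}_{> 0}\) then \(X = \pm EY\), so \(S = \{(Ey, y) \mid y \in \mathbb{Z}\} \cup \{(-Ey, y) \mid y \in \mathbb{Z}\}\). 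In both of these cases, substituting the formula for \(x\) in terms of \(y\) turns \(L\) into a union of at most two languages of the form
\[
	L_{\lambda, \mu, \gamma, \zeta} = \{a^{\lambda y + \gamma} \# b^{\mu y + \zeta} \mid y \in \mathbb{Z}\},
\]
where \(\lambda, \mu \in \mathbb{Z}\) are built from \(\alpha, \beta, \delta, \epsilon\) and \(E\) (e.g.\ \(\lambda = \pm \alpha E + \beta\)). By \cref{hash_sep_union_lem} it then suffices to produce, for each such language, a \(\#\)-separated EDT0L system with the stated constructibility and boundedness.

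So fix \(\lambda, \mu, \gamma, \zeta \in \mathbb{Z}\) and consider \(L_{\lambda, \mu, \gamma, \zeta}\). The one real obstruction is that as \(y\) crosses \(-\gamma / \lambda\) the word representing \(\lambda y + \gamma\) switches from a power of \(a^{-1}\) to a power of \(a\) (and similarly for the \(b\)-coordinate), and since EDT0L endomorphisms can only append letters — we cannot cancel — no single ``append a fixed block'' rule generates the whole language. I would therefore choose \(T = \max\bigl(1, \lceil |\gamma| / \max(1,|\lambda|) \rceil, \lceil |\zeta| / \max(1,|\mu|) \rceil\bigr)\), an integer whose bit-length is logarithmic in the input, so that for \(y \geq T\) both \(\lambda y + \gamma\) and \(\mu y + \zeta\) have the (weak) sign of \(\lambda\), resp.\ \(\mu\), and for \(y \leq -T\) the opposite signs, and split \(L_{\lambda, \mu, \gamma, \zeta}\) as the union of the tail \(\{y \geq T\}\), the tail \(\{y \leq -T\}\), and the finite middle \(\{-T < y < T\}\).

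For each tail, reindex \(y = \pm(T + n)\) with \(n \geq 0\), so that \(\lambda y + \gamma = (\pm\lambda) n + (\pm \lambda T + \gamma)\) with both coefficients of a common sign; padding with a constant third sequence, the pair of sequences \(\bigl(\lambda y + \gamma\bigr)_n\), \(\bigl(\mu y + \zeta\bigr)_n\) satisfies the hypotheses of \cref{gen_fibonacci_EDT0L_lem}, which yields EDT0L systems of shape \(\theta \varphi^\ast \psi\) for \(\{a^{\lambda y + \gamma} \mid \cdot\}\) and \(\{b^{\mu y + \zeta} \mid \cdot\}\) that are log-space constructible with controlled alphabet and image sizes. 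I would then splice these into a single \(\#\)-separated system exactly as the system \(\mathcal G\) is built from \(\mathcal H\) and \(\hat{\mathcal H}\) in the proof of \cref{Pell_multiple_EDT0L_lem}: disjointify extended alphabets, prepend start word \(\perp_1 \# \perp_2\), interleave the controls. For the middle, use the finite endomorphism-set construction from the \(Q\)-part of that same proof: endomorphisms \(\pi_y\) for \(-T < y < T\) with \(\perp_1 \pi_y = a^{\lambda y + \gamma}\), \(\perp_2 \pi_y = b^{\mu y + \zeta}\), every other letter fixed; enumerating them costs only a counter up to \(2T\), i.e.\ \(O(\log(\text{input}))\) bits, and each image has length at most \(|\lambda| T + |\gamma| + |\mu| T + |\zeta|\), which by the choice of \(T\) is polynomial in the input. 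Finally apply \cref{hash_sep_union_lem} to these three \(\#\)-separated systems; since only a bounded number of unions is taken, the result is \(\#\)-separated, \(\ns(f)\)-constructible, and \(h\)-bounded. (In the perfect-square case \(\lambda = \pm\alpha E + \beta\) involves \(E = \sqrt D\), so the genuine bound on \(h\) is polynomial rather than exactly linear in \(\max(|\alpha|, \dots, |\zeta|, D)\); this is harmless for the downstream application.)

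The only genuinely delicate point is the sign-change phenomenon isolated above, which forces the split around \(\pm T\) and the explicit treatment of the finitely many ``transitional'' values of \(y\). Everything else — the closure operations, the \(\#\)-separated bookkeeping, the linear-recurrence gadget, and the parallel splicing of two single-variable systems into one — is already packaged in \cref{hash_sep_union_lem}, \cref{gen_fibonacci_EDT0L_lem} and the proof method of \cref{Pell_multiple_EDT0L_lem}, so the remainder is a careful but routine verification of the size bounds.
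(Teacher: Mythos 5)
Your proposal is correct, but it takes a genuinely different route from the paper. The paper's proof is two lines: it observes that solutions satisfy \(x = \sqrt{D}\,y\), asserts that a non-square \(D\) admits no solutions (strictly speaking \((0,0)\) is always a solution, which you handle correctly and the paper overlooks -- harmlessly, since a singleton language is trivially covered), and for square \(D\) simply invokes \cref{sqaure_D_Pell_EDT0L_lem}. You instead parametrise the solution set explicitly -- \(S=\{(0,0)\}\) for non-square \(D>0\), \(S=\{(0,y)\}\) for \(D=0\), and \(S=\{(\pm Ey,\,y)\}\) for \(D=E^2\) -- reducing \(L\) to at most two arithmetic-progression languages \(\{a^{\lambda y+\gamma}\#b^{\mu y+\zeta}\mid y\in\mathbb{Z}\}\), which you then build directly via the sign-splitting around \(\pm T\), \cref{gen_fibonacci_EDT0L_lem} for the two tails, the splicing device from the proof of \cref{Pell_multiple_EDT0L_lem}, and \cref{hash_sep_union_lem} for the finite middle and the final union. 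This is more work than the paper's citation, but it is also more robust: the proof of \cref{sqaure_D_Pell_EDT0L_lem} derives finiteness of the solution set from the divisor bound \(|x\pm v|\leq|N|\), which only makes sense for \(N\neq 0\), whereas at \(N=0\) with \(D\) a positive square (or \(D=0\)) the solution set is infinite; your construction is essentially what is needed to treat that infinite family honestly, so your route fills in a step the paper's reduction glosses over. The one place you fall short of the stated conclusion is the boundedness claim: with \(\lambda=\pm\alpha E+\beta\) and \(E=\sqrt{D}\), your system is \(h\)-bounded only for a polynomial \(h\) rather than the linear bound asserted in part (3); as you note, this is harmless where the lemma is used (Theorem \ref{quad_eqn_2_var_integers_EDT0L_thm} only needs bounds of the form \(h_1h_2\) with \(h_2\) exponential in \(D\)), and the paper's own linear claims in these finite-case lemmas are similarly loose, but it is worth flagging that your argument proves a slightly weaker version of (3) than stated.
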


	\begin{proof}
		First note that \((x, \ y)\) is a solution if and only if \(x = \sqrt{D}
		y\). It follows that if \(D\) is non-square, then this admits no solutions,
		and there is nothing to prove. If \(D\) is square, then the result follows
		from Lemma \ref{sqaure_D_Pell_EDT0L_lem}.
	\end{proof}

	Combining the different cases of the equation \(X^2 + DY^2 = N\) allows us to
	use Lagrange's method to show the following. The proof follows the arguments
	given in \cite{new_look_old_equation}, Section 1.

	\begin{theorem}
		\label{quad_eqn_2_var_integers_EDT0L_thm}
		Let
		\begin{equation}
			\label{two_var_quad_eqn}
			\alpha X^2 + \beta XY + \gamma Y^2 + \delta X + \epsilon Y + \zeta = 0
		\end{equation}
		be a two-variable quadratic
		equation in the ring of integers, with a set \(S\) of solutions. Then
		\begin{enumerate}
			\item The language \(L = \{a^x \# b^y \mid (x, \ y) \in S\}\) is EDT0L;
			\item Taking the input size to be \(\max(|\alpha|, \ |\beta|, \ |\gamma|,
			\ |\delta|, \ |\epsilon|, \ |\zeta|)\), an EDT0L system for \(L\) is
			constructible in \(\ns(n \mapsto n^4 \log n)\).
		\end{enumerate}
	\end{theorem}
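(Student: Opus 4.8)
The plan is to carry out Lagrange's reduction of a two-variable quadratic equation to a general Pell's equation, as in \cite{new_look_old_equation}, and then feed the result through the machinery of Sections \ref{division_EDT0L_sec} and \ref{pell_eqn_sec}.

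\textbf{Degenerate cases.} First I would separate off the cases where the quadratic part is degenerate. If $\alpha = \beta = \gamma = 0$ the equation is linear, so $S$ is empty or a coset of a rank-$\le 1$ subgroup of $\mathbb{Z}^2$, and the language is visibly EDT0L and cheap to construct. If $\alpha = \gamma = 0$ but $\beta \ne 0$, multiplying \eqref{two_var_quad_eqn} by $\beta$ rewrites it as $(\beta X + \epsilon)(\beta Y + \delta) = \epsilon\delta - \beta\zeta$; if the right-hand side is $0$ then $S$ is a union of two lines, and otherwise $S$ is finite with coordinates bounded polynomially in the input, so again the language is EDT0L and cheap. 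If $\alpha \ne 0$ (the case $\gamma \ne 0$ being symmetric after interchanging $X$ and $Y$) but $D := \beta^2 - 4\alpha\gamma = 0$, then multiplying by $4\alpha$ turns the equation into $(2\alpha X + \beta Y + \delta)^2 = E Y + F$ with $E, F$ computable; so, setting $P = 2\alpha X + \beta Y + \delta$, the solution set is parametrised by $P$ ranging over a union of arithmetic progressions, along each of which $Y$ and then $X$ are quadratic polynomials in $P$. Splitting into residue classes and translating to $P = cn + d$, each piece has the form $\{a^{x_n}\# b^{y_n} : n \ge 0\}$ with $x_n, y_n$ values of polynomials of degree at most $2$; replacing a sequence by its negative when its leading coefficient is negative makes all terms eventually non-negative, and the identity ``second differences are constant'' exhibits each such sequence as the $p_n$ of \cref{gen_fibonacci_EDT0L_lem} (with $q_n$ its first-difference sequence and $r_n$ constant), so the pieces are EDT0L, constructible in low space; a finite union via \cref{hash_sep_union_lem} finishes this case.

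\textbf{Main reduction.} Assume now $\alpha \ne 0$ and $D \ne 0$. Multiplying \eqref{two_var_quad_eqn} by $4\alpha$, setting $P = 2\alpha X + \beta Y + \delta$, then multiplying by $4D$ and setting $Q = 2DY + E$ with $E = 2\beta\delta - 4\alpha\epsilon$, and completing the square twice, turns the equation into a general Pell's equation $Q^2 - D'P^2 = N'$ with $D' = 4D$, where $D'$, $N'$, $E$ and the other constants that appear are all bounded by a polynomial in $n = \max(|\alpha|, \dots, |\zeta|)$. Unwinding the substitutions, $(x, y) \in S$ if and only if $(q, p)$ is a solution of this general Pell's equation lying in prescribed residue classes (namely $2D \mid q - E$ and $2\alpha \mid p - \beta\tfrac{q - E}{2D} - \delta$), in which case $y = \tfrac{q - E}{2D}$ and $x = \tfrac{p - \beta y - \delta}{2\alpha}$; equivalently $x$ and $y$ are of the shape $\tfrac{\lambda p + \mu q + \xi}{\eta}$ with $\lambda, \mu, \xi, \eta$ computable and $\eta \ne 0$. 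On each of the finitely many classes of general Pell's solutions (\cref{finitely_many_classes_lem}), and after discarding finitely many initial terms, the solutions $(x_n, y_n)$ form a monotone family, so both the pair $(x_n, y_n)$ and the pair of numerators above run over the graph of a partial injection, which is exactly the hypothesis \cref{EDT0L_division_lem} requires.

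\textbf{Assembling the language, and the complexity obstacle.} For a fixed class, \cref{gen_pell_eqn_primitive_EDT0L_lem} (together with \cref{gen_pell_eqn_EDT0L_lem}, \cref{negative_D_Pell_EDT0L_lem}, \cref{sqaure_D_Pell_EDT0L_lem} and \cref{0_N_Pell_EDT0L_lem} to cover $D' \le 0$, $D'$ a perfect square, or $N' = 0$) gives a $\#$-separated EDT0L system for the numerator language $\{a^{\lambda p + \mu q + \xi}\# b^{\lambda' p + \mu' q + \xi'}\}$ that is exponentially bounded and constructible in $\ns(fg)$ with $f$ logarithmic in $n$ and $g$ linear in $D' = O(n^2)$. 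I would then intersect with the regular language imposing the residue conditions on the two track-lengths (\cref{EDT0L_closure_properties_lem}), apply \cref{EDT0L_division_lem} to divide the tracks by $\eta$ and $\eta'$, take a finite union over classes and over the finitely many sign/congruence subcases with \cref{hash_sep_union_lem}, and map the result onto the alphabet $\{a, b, \#\}$; together with the degenerate cases this yields a single EDT0L system for $L$. The step I expect to be the real obstacle is the space bound. The factor linear in $D' = O(n^2)$ coming from \cref{gen_pell_eqn_EDT0L_lem} already contributes $n^2$, and \cref{EDT0L_division_lem} multiplies the construction space by a factor linear in each divisor, while the naive expression of $x$ in terms of $(q,p)$ has denominator $4\alpha D = \Theta(n^3)$. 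To bring the total down to $\ns(n \mapsto n^4\log n)$ one must be careful about the order of operations: recover $y$ from the Pell solution first (a division by $2D = O(n^2)$ on one track only), then recover $x$ from $p$ and $y$ (a division by $2\alpha = O(n)$), fold the affine adjustment of the $a$-track by $-\beta y - \delta$ into the linear functions fed to \cref{gen_pell_eqn_EDT0L_lem} rather than performing it on the already-divided language, and verify at each stage that the intermediate $\#$-separated system stays exponentially bounded in a function comparable to its construction space, so that \cref{EDT0L_division_lem} can be re-applied. Tracking the logarithmic and linear contributions through \cref{pell_eqn_bound_lem}, \cref{gen_pell_eqn_bound_lem} and the \cref{x_N_bound_lem}-style estimates then gives the stated bound.
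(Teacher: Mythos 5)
Your overall route is the paper's: the same Lagrange reduction to a general Pell equation (the paper takes \(U = DY + E\), \(V = 2\alpha X + \beta Y + \delta\) with \(E = \beta\delta - 2\alpha\epsilon\), \(N = E^2 - DF\), rather than your rescaled \(Q, P\) with \(D' = 4D\)), then the lemmas of Sections~\ref{pell_eqn_sec} and~\ref{quad_eqns_sec} to obtain a \(\#\)-separated system for the numerator language \(\{a^{Dv - \beta u + \beta E - \delta D} \# b^{u - E}\}\), then \cref{EDT0L_division_lem}. Two places where your plan deviates cause trouble. First, the intersection with a regular language imposing the residue conditions is both unnecessary and harmful: \cref{EDT0L_division_lem} already discards every word whose exponents are not divisible by the chosen constants -- this is exactly how the paper recovers the genuine solutions \((x,y)\) from the full set of Pell solutions -- whereas routing the language through the generic closure lemma (\cref{EDT0L_closure_properties_lem}) returns an EDT0L system that is no longer guaranteed to be \(\#\)-separated and \(\exp(f)\)-bounded, and those are precisely the hypotheses you need in order to apply \cref{EDT0L_division_lem} afterwards.

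Second, and more seriously, your proposed repair of the space bound does not go through with the available machinery. You want to divide the \(b\)-track by \(2D\) first and then ``recover \(x\) from \(p\) and \(y\)'' by a division by \(2\alpha\), folding in the adjustment \(-\beta y - \delta\). But that adjustment mixes the two tracks: once \(y\) sits on the \(b\)-track, no operation in the paper subtracts \(\beta y + \delta\) from the \(a\)-track (this is exactly the ``cannot freely cancel \(a\)s against \(a^{-1}\)s'' obstruction the paper emphasises), and if you instead express \(x\) as an integer linear form in \((p, q)\) divided by an integer, the denominator is forced to be of size about \(\alpha D\), i.e.\ the \(\Theta(n^3)\) divisor you were trying to avoid. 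The paper attempts no such reordering: it writes \(2\alpha D x = Dv - \beta u + \beta E - \delta D\) and \(D y = u - E\) and applies \cref{EDT0L_division_lem} once, with divisors \(2\alpha D\) and \(D\), reading the stated complexity off that single application. So for part (1) your argument is essentially the paper's -- indeed more careful, since you treat the degenerate cases \(\alpha = 0\) and \(D = 0\) and check the partial-injection hypothesis of \cref{EDT0L_division_lem}, neither of which the paper discusses -- but part (2) is left as an acknowledged obstacle, and the sketched workaround is not justified; to match the paper you should drop the intersection step and the reordering and apply the division lemma directly to the numerator language.
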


	\begin{proof}
		Let \(D = \beta^2 - 4 \alpha \gamma\), \(E = \beta \delta - 2 \alpha
		\epsilon\) and \(F = \delta^2 - 4 \alpha \zeta\), and define
		new variables \(U = DY + E\) and \(V = 2 \alpha X + \beta Y + \delta\). Then
		\begin{enumerate}
			\item \(V^2 = 4 \alpha^2 X^2 + \beta^2 Y^2 + \delta^2 + 4 \alpha \beta XY
			+ 4 \alpha \delta X + 2 \beta \delta Y\);
			\item \(DY^2 = \beta^2 Y^2 - 4 \alpha \gamma Y^2\);
			\item \(2EY = 2 \beta \delta Y - 4 \alpha \epsilon Y\);
			\item \(F = \delta^2 - 4 \alpha \zeta\).
		\end{enumerate}
		Thus
		\[
			V^2 - DY^2 - 2EY - F = 4 \alpha^2 X^2 + 4 \alpha \beta XY + 4 \alpha
			\delta X + 4 \alpha \gamma Y^2 + 4 \alpha \epsilon Y + 4 \alpha \zeta.
		\]
		It follows that \eqref{two_var_quad_eqn} can be rewritten as
		\begin{equation*}
			V^2 = DY^2 + 2EY + F.
		\end{equation*}
		This is equivalent to
		\[
			DV^2 = (DY + E)^2 + DF - E^2.
		\]
		By substituting \(U\) for \(DY + E\), and setting \(N = E^2 - DF\), we can
		conclude that \eqref{two_var_quad_eqn} can be written as
		\begin{align}
			\label{gen_pell_in_proof_reduction_eqn}
			U^2 - DV^2 = N.
		\end{align}

		Note that
		\[
			Y = \frac{U - E}{D}, \qquad X = \frac{V - \beta Y - \delta}{2\alpha}
			= \frac{VD - \beta U + \beta E - \delta D}{2 \alpha D}.
		\]
		Let \(T\) be the set of solutions to
		\eqref{gen_pell_in_proof_reduction_eqn}. By Lemma
		\ref{gen_pell_eqn_EDT0L_lem}, Lemma \ref{negative_D_Pell_EDT0L_lem}, Lemma
		\ref{sqaure_D_Pell_EDT0L_lem} or Lemma \ref{0_N_Pell_EDT0L_lem} (dependent
		on whether \(D\) is positive and non-square, positive and square, or
		non-positive, and whether or not \(N = 0\)) we have that
		\[
			 M = \{a^{DV - \beta U + \beta E - \delta D} \# b^{U - E} \mid (u, \
			 v) \in T\}
		\]
		is accepted by a \(\#\)-separated EDT0L system \(\mathcal H\), which is
		constructible in \(\ns(fg)\), where \(f\) is logarithmic in \(\max(|D|, \
		|\beta|, \ |\beta E - \delta D|, \ |E|)\), and \(g\) is linear in \(|D|\).
		Let \(C\) be the extended alphabet of \(\mathcal H\), and let \(B\) be the
		finite set of endomorphisms of \(C^\ast\) over which the rational control of
		\(\mathcal H\) is regular. Using Lemma \ref{gen_pell_eqn_EDT0L_lem},
		\ref{gen_pell_eqn_EDT0L_lem}, Lemma \ref{negative_D_Pell_EDT0L_lem}, Lemma
		\ref{sqaure_D_Pell_EDT0L_lem} or Lemma \ref{0_N_Pell_EDT0L_lem}, we also
		have that \(|C|\) and \(\max\{|c \phi| \mid c \in C, \ \phi \in B\}\) are
		bounded by \(h_1 h_2\), where \(h_1\) is linear in \(\max(|D|, \ |\beta|, \
		|\beta E - \delta D|, \ |E|)\), and \(h_2\) is exponential in \(|D|\).

		We have that \(D = \beta^2 - 4 \alpha \gamma\) and \(N = E^2 - DF = (\beta
		\delta - 2 \alpha \epsilon)^2 - (\beta^2 - 4 \alpha \gamma)(\delta^2 - 4
		\gamma \zeta)\). It follows that \(f\) is logarithmic in \(\max(|\alpha|, \
		|\beta|, \ |\gamma|, \ |\delta|, \ |\epsilon|)\), and \(g\) is quadratic in
		\(\max(|\alpha|, \ |\beta|, \ |\gamma|)\).

		In addition, \(h_1\) is quartic in \(\max(|\alpha|, \ |\beta|, \ |\gamma|, \
		|\delta|, \ |\epsilon|)\), and \(h_2\) is \(\mathcal O(2^{n^2})\) in
		\(\max(|\alpha|, \ |\beta|, \ |\gamma|)\).

		Note that \(DY = U - E\) and \(2 \alpha D X = DV - \beta U + \beta E -
		\delta D\). Thus we have that
		\[
			M = \{a^{2 \alpha D x} \# b^{Dy} \mid (x, \ y) \in S\}.
		\]
		By \cref{EDT0L_division_lem}, it follows that \(L\) is EDT0L, and
		accepted by an EDT0L system that is constructible in \(\ns(n \mapsto
		n^4 \log n)\).
	\end{proof}

	Using Lemma \ref{EDT0L_closure_properties_lem} to apply the free monoid
	homomorphism that maps \(b\) to \(a\) to a language described in Theorem
	\ref{quad_eqn_2_var_integers_EDT0L_thm} gives the following:

	\begin{cor}
		\label{quad_eqn_2_var_integers_EDT0L_cor}
		The solution language to a two-variable quadratic equation in integers is
		EDT0L, accepted by an EDT0L system that is constructible in \(\ns(n \mapsto
		n^4 \log n)\), with the input size taken to be the maximal absolute value of
		a coefficient.
	\end{cor}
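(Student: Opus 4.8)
The plan is to derive the corollary directly from Theorem \ref{quad_eqn_2_var_integers_EDT0L_thm} together with closure of EDT0L languages under free monoid homomorphisms. Recall that the solution language of the two-variable equation \eqref{two_var_quad_eqn}, in the sense defined in Section \ref{prelim_sec}, is \(\{a^x \# a^y \mid (x, \ y) \in S\}\): the map \(\mu\) sends every integer \(n\) to \(a^n\), so both variables are recorded using the single letter \(a\). Theorem \ref{quad_eqn_2_var_integers_EDT0L_thm} instead produces the language \(L = \{a^x \# b^y \mid (x, \ y) \in S\}\), in which the value of the second variable is recorded using a second letter \(b\). The first step is therefore to observe that the solution language is the image of \(L\) under the free monoid homomorphism \(h \colon \{a, \ a^{-1}, \ b, \ b^{-1}, \ \#\}^\ast \to \{a, \ a^{-1}, \ \#\}^\ast\) determined by \(a \mapsto a\), \(a^{-1} \mapsto a^{-1}\), \(b \mapsto a\), \(b^{-1} \mapsto a^{-1}\) and \(\# \mapsto \#\).

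Next I would invoke Lemma \ref{EDT0L_closure_properties_lem}(5): the image of an EDT0L language under a free monoid homomorphism is EDT0L, and if the EDT0L system for \(L\) is constructible in \(\ns(f)\) and \(h\) can be written down in \(\ns(f)\), then there is an EDT0L system for \(Lh\) constructible in \(\ns(f)\) as well. Since \(h\) is a fixed homomorphism on a fixed finite alphabet, it can be written down in constant space, so this hypothesis holds for every \(f\) that is at least constant. Theorem \ref{quad_eqn_2_var_integers_EDT0L_thm} supplies an EDT0L system for \(L\) constructible in \(\ns(n \mapsto n^4 \log n)\), with \(n = \max(|\alpha|, \ |\beta|, \ |\gamma|, \ |\delta|, \ |\epsilon|, \ |\zeta|)\), which is precisely the maximal absolute value of a coefficient. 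Applying the closure result then yields an EDT0L system for the solution language \(Lh = \{a^x \# a^y \mid (x, \ y) \in S\}\) constructible in \(\ns(n \mapsto n^4 \log n)\), as required.

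There is no real obstacle: all of the mathematical content sits in Theorem \ref{quad_eqn_2_var_integers_EDT0L_thm}. The only points needing care are matching the two notions of solution language (one letter versus two) and confirming that the homomorphism step does not inflate the space bound, which it does not, because \(h\) has constant description size.
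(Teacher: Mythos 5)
Your proposal is correct and matches the paper's proof exactly: the paper also obtains the corollary by applying the free monoid homomorphism sending \(b\) to \(a\) to the language of Theorem \ref{quad_eqn_2_var_integers_EDT0L_thm} and invoking Lemma \ref{EDT0L_closure_properties_lem} to preserve both EDT0L-ness and the \(\ns(n \mapsto n^4 \log n)\) construction bound. Your additional remarks about matching the two notions of solution language and the constant description size of the homomorphism are accurate but add nothing beyond the paper's one-line argument.
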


\section{From Heisenberg equations to integer equations}
  \label{Heisenberg_eqns_sec}
  This section aims to prove that the solution language to an equation in one
  variable in the Heisenberg group is EDT0L. We do this by showing that	a single
  equation \(\mathcal{E}\) in the Heisenberg group is `equivalent' to a system
  \(S_\mathcal{E}\) of quadratic equations in the ring of integers. The idea of
  the proof is to replace each variable in \(\mathcal{E}\) with a word
  representing a potential solution, and then convert the resulting word into
  Mal'cev normal form. The equations in \(S_\mathcal{E}\) occur by equating the
  exponent of the generators to \(0\).

	We start with an example of an equation in the Heisenberg group.

  \begin{ex}
    \label{Heisenberg_group_eqn_ex}
    We will transform the equation \(XYX = 1\) in the Heisenberg group into a
    system over the integers. Using the Mal'cev normal form we can write \(X =
    a^{X_1} b^{X_2} c^{X_3}\) and \(Y = a^{Y_1} b^{Y_2} c^{Y_3}\) for
    variables \(X_1, \ X_2, \ X_3, \ Y_1, \ Y_2, \ Y_3\) over the integers.
    Replacing \(X\) and \(Y\) in \(XYX = 1\) in these expressions gives
    \begin{align}
      \label{Heisenberg_ex_init_eqn}
      a^{X_1} b^{X_2} c^{X_3} a^{Y_1} b^{Y_2} c^{Y_3} a^{X_1}
      b^{X_2} c^{X_3} = 1.
    \end{align}
    After manipulating this into Mal'cev normal form, we obtain
    \begin{align}
      \label{Heisenberg_ex_malcev_eqn}
      a^{2 X_1 + Y_1} b^{2X_2 + Y_2} c^{2 X_3 + Y_3 + X_1 Y_2 + X_1 X_2 +
      Y_1 X_2} = 1.
    \end{align}
    As this normal form word is trivial if and only if the exponents of
    \(a\), \(b\) and \(c\) are all equal to \(0\), we obtain the following
    system
    over \(\mathbb{Z}\):
    \begin{align}
      \label{Heisenberg_ex_Z_sys_eqn}
      & 2 X_1 + Y_1 = 0 \\
      \nonumber
      & 2 X_2 + Y_2 = 0 \\
      \nonumber
      & 2 X_3 + Y_3 + X_1 Y_2 + X_1 X_2 + Y_1 X_2 = 0.
    \end{align}
    Note that the variables corresponding to the exponent of \(c\) in \(X\)
    and \(Y\), namely \(X_3\) and \(Y_3\), only appear in linear terms in the
    above system.

    In this specific example it is not hard to enumerate the solutions in a
    somewhat reasonable manner. We can start by replacing occurrences of
    \(Y_1\) and \(Y_2\) in the third equation of
    \eqref{Heisenberg_ex_Z_sys_eqn} with \(-2X_1\) and \(-2X_2\),
    respectively, to give that \eqref{Heisenberg_ex_Z_sys_eqn} is equivalent
    to
    \begin{align*}
      & Y_1 = -2X_1 \\
      \nonumber
      & Y_2 = -2X_2 \\
      \nonumber
      & 2 X_3 + Y_3 - 2X_1 X_2 + X_1 X_2 - 2 X_1 X_2 = 0.
    \end{align*}
    This simplifies to
    \begin{align*}
      & Y_1 = -2X_1 \\
      \nonumber
      & Y_2 = -2X_2 \\
      \nonumber
      & 2 X_3 + Y_3 =  3X_1 X_2.
    \end{align*}
    We can now enumerate all values of \((X_1, \ X_2, \ X_3)\) (across
    \(\mathbb{Z})\), and each such choice will fix the values of \(Y_1\),
    \(Y_2\) and \(Y_3\), for which there will always exist a solution. Using
    this method, we have that the solution set to
    \eqref{Heisenberg_ex_Z_sys_eqn} is equal to
    \[
      \{(x_1, \ x_2, \ x_3, \ -2x_1, \ -2x_2, \ 3x_1 x_2 - 2x_3) \mid x_1, \
      x_2, \ x_3 \in \mathbb{Z}\}.
    \]
    Translating this back into the language of the Heisenberg group gives that
    the solution set to \(XYX = 1\) is
    \[
      \{(a^{x_1} b^{x_2} c^{x_3}, \ a^{-2x_1} b^{-2x_2} c^{3x_1 x_2 - 2x_3})
      \mid x_1, \ x_2, \ x_3 \in \mathbb{Z}\}.
    \]
  \end{ex}


	The following definition allows us to transform an equation in a single
	variable in the Heisenberg group into a system of equations in the ring of
	integers. This is done by representing the variables as expressions in Mal'cev
	normal form, plugging these expressions back into the equation, and then
	converting the resulting word into Mal'cev normal form. After doing this, the
	exponents of the generators can the be equated to \(0\), which yields a
	system of equations in the ring of integers.

	\begin{dfn}
		If \(w = 1\) is an equation in a class 2 nilpotent group, consider
		the system of equations over the integers defined by taking the variable
		\(X\), and viewing it in Mal'cev normal form by introducing new variables:
		\(X = a^{X_1} b^{Y_1} c^{Z_1}\), where the \(X_1\), \(X_2\) and \(X_3\)
		take values in \(\mathbb{Z}\). The resulting system of equations over
		\(\mathbb{Z}\) obtained by setting the expressions in the exponents equal
		to zero is called the \textit{\(\mathbb{Z}\)-system} of \(w = 1\).
	\end{dfn}

	\begin{ex}
		The \(\mathbb{Z}\)-system of the equation \eqref{Heisenberg_ex_init_eqn}
		from Example \ref{Heisenberg_group_eqn_ex} is
		\begin{align*}
			& 2 X_1 + Y_1 = 0 \\
			& 2 X_2 + Y_2 = 0 \\
			& 2 X_3 + Y_3 + X_1 Y_2 + X_1 X_2 + Y_1 X_2 = 0.
		\end{align*}
	\end{ex}

	We now explicitly calculate the \(\mathbb{Z}\)-system of an arbitrary
	equation in one variable in the Heisenberg group.

	\begin{lem}
		\label{Z_system_lem}
		Let
		\begin{align}
			\label{Heisenberg_eqn}
			X^{\epsilon_1} a^{i_1} b^{j_1} c^{k_1} \cdots X^{\epsilon_n} a^{i_n}
			b^{j_n} c^{k_n} = 1
		\end{align}
		be a single equation in one variable in the Heisenberg group, where
		\(\epsilon_1, \ \ldots, \ \epsilon_n \in \{-1, \ 1\}\), and \(i_1, \
		\ldots i_n, \ j_1, \ \ldots, \ j_n, \ k_1, \ \ldots, \ k_n \in
		\mathbb{Z}\). Define
		\[
			\delta_r = \left\{
			\begin{array}{cl}
				0 & \epsilon_r = 1 \\
				1 & \epsilon_r = -1.
			\end{array}
			\right.
		\]
		Writing \(X = a^{X_1} b^{X_2} c^{X_3}\) with \(X_1\),
		\(X_2\) and \(X_3\) over \(\mathbb{Z}\) gives that the
		\(\mathbb{Z}\)-system of \eqref{Heisenberg_eqn} is
		\begin{align*}
			& \sum_{r = 1}^n (\epsilon_r X_1 + i_r) = 0 \\
			&	\sum_{r = 1}^n (\epsilon_r X_2 + j_r) = 0 \\
			& \sum_{r = 1}^n (\epsilon_r X_3 + k_r + \delta_r X_1
			X_2) + \sum_{r = 1}^n \sum_{s = 1}^r ( \epsilon_r \epsilon_s X_1 X_2 +
			\epsilon_r X_1 j_s) + \sum_{r = 1}^n \sum_{s = 1}^r (i_r \epsilon_s X_2
			+ i_r j_s) = 0.
		\end{align*}
	\end{lem}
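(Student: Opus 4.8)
The plan is to unwind the definition of the \(\mathbb{Z}\)-system completely. Substituting \(X=a^{X_1}b^{X_2}c^{X_3}\) into \eqref{Heisenberg_eqn} and pushing the resulting word into Mal'cev normal form produces a word \(a^{E_a}b^{E_b}c^{E_c}\) with \(E_a,E_b,E_c\in\mathbb{Z}[X_1,X_2,X_3]\), and the \(\mathbb{Z}\)-system is by definition \(\{E_a=0,\ E_b=0,\ E_c=0\}\); so it is enough to compute these three polynomials.

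First I would normalise a single factor \(X^{\epsilon_r}\). For \(\epsilon_r=1\) there is nothing to do. For \(\epsilon_r=-1\) we have \(X^{-1}=c^{-X_3}b^{-X_2}a^{-X_1}\); moving the central \(c^{-X_3}\) to the right and then carrying \(a^{-X_1}\) left past \(b^{-X_2}\) via the identity \(b^{m}a^{k}=a^{k}b^{m}c^{mk}\) (which follows from \cref{malcev_gen_rules_lem} by induction on \(|m|\) and \(|k|\), and is essentially the bilinearity of commutators in a class \(2\) group used in \cref{commutator_lem}) gives \(X^{-1}=a^{-X_1}b^{-X_2}c^{X_1X_2-X_3}\). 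In both cases this is \(a^{\epsilon_r X_1}b^{\epsilon_r X_2}c^{\delta_r X_1X_2+\epsilon_r X_3}\), the term \(\delta_r X_1X_2\) being the commutator picked up on inverting. Hence each factor \(X^{\epsilon_r}a^{i_r}b^{j_r}c^{k_r}\) of \eqref{Heisenberg_eqn} becomes a product of two ``basic blocks'' of the form \(a^{\ast}b^{\ast}c^{\ast}\), and \eqref{Heisenberg_eqn} becomes a product of \(2n\) such blocks.

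Next I would record the merging rule
\[
  a^{A}b^{B}c^{C}\cdot a^{A'}b^{B'}c^{C'}=a^{A+A'}\,b^{B+B'}\,c^{C+C'+BA'},
\]
which, like the previous identity, follows from \cref{malcev_gen_rules_lem} together with the centrality of \(c\) and is the same computation already carried out in the uniqueness half of the proof that Mal'cev normal form exists. Applying it repeatedly to the \(2n\) basic blocks collapses the substituted word to \(a^{E_a}b^{E_b}c^{E_c}\). Since the \(a\)- and \(b\)-exponents merely add, \(E_a=\sum_{r}(\epsilon_r X_1+i_r)\) and \(E_b=\sum_{r}(\epsilon_r X_2+j_r)\), which are the first two equations of the statement. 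The \(c\)-exponent \(E_c\) collects the constant parts \(\sum_{r}(\delta_r X_1X_2+\epsilon_r X_3+k_r)\) together with one cross term \(BA'\) for every merge, namely the accumulated \(b\)-exponent lying to the left of an \(a\)-block times that \(a\)-exponent. Sorting these cross terms according to whether the left block and the right block come from an \(X^{\epsilon}\)-factor or from a constant block \(a^{i}b^{j}\) produces four families of contributions — the terms \(\epsilon_r\epsilon_s X_1X_2\), \(\epsilon_r X_1 j_s\), \(i_r\epsilon_s X_2\) and \(i_r j_s\) — each running over a suitable range \(1\le s\le r\), and collecting everything gives the third equation.

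The main obstacle is purely the bookkeeping of \(E_c\): one must pin down exactly which ordered pairs of the \(2n\) blocks contribute a cross term and over which index ranges those contributions run, with special care for the ``\(s=r\)'' contribution arising inside a single factor \(X^{\epsilon_r}a^{i_r}b^{j_r}c^{k_r}\), where \(a^{i_r}\) is carried past \(b^{\epsilon_r X_2}\). I would make this rigorous by induction on \(n\): assuming the formula for the product of the first \(n-1\) factors, pre-combine the \(n\)-th factor into a single block and merge it in using the rule above, then verify that the newly created cross term \(E_b^{(n-1)}(\epsilon_n X_1+i_n)\) together with the within-factor term contribute precisely the \(r=n\) summands of the double sums. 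Reading off \(E_a=E_b=E_c=0\) then completes the proof.
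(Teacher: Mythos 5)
Your strategy is essentially the paper's: substitute \(X=a^{X_1}b^{X_2}c^{X_3}\), use centrality of \(c\), the inversion identity \((a^{x_1}b^{x_2})^{-1}=a^{-x_1}b^{-x_2}c^{x_1x_2}\) and the commutation rule \(b^ma^k=a^kb^mc^{mk}\) (both consequences of \cref{malcev_gen_rules_lem}) to bring the substituted word into Mal'cev normal form, then equate the three exponents to zero. The paper does this in one pass by pushing all \(a\)s to the left; you package the identical computation as an induction on \(n\) with a block-merging rule. Your merging rule, your normal form for \(X^{-1}\) (giving the \(\delta_r X_1X_2\) term), and the first two equations are all correct.

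The gap sits exactly where you flag ``the main obstacle'': you assert the cross terms form four families ``each running over a suitable range \(1\le s\le r\)'' and that collecting them ``gives the third equation'', but your own merge rule does not produce those ranges. Pre-combining the \(r\)-th factor into \(a^{A_r}b^{B_r}c^{C_r}\) with \(A_r=\epsilon_rX_1+i_r\), \(B_r=\epsilon_rX_2+j_r\) and within-factor cross term \(i_r\epsilon_rX_2\), the \(r\)-th merge contributes
\[
\sum_{s=1}^{r-1}\bigl(\epsilon_r\epsilon_s X_1X_2+\epsilon_r X_1 j_s+i_r j_s\bigr)\;+\;\sum_{s=1}^{r} i_r\epsilon_s X_2,
\]
so only the family \(i_r\epsilon_sX_2\) runs up to \(s=r\); the other three stop at \(s=r-1\). (Sanity check with \(n=1\), \(\epsilon_1=1\): the normal form of \(a^{X_1}b^{X_2}c^{X_3}a^{i_1}b^{j_1}c^{k_1}\) has \(c\)-exponent \(X_3+k_1+i_1X_2\), with no \(X_1X_2\), \(X_1j_1\) or \(i_1j_1\) terms.) Consequently the verification step you propose in the induction would not close against the displayed third equation: your computation — which agrees with the paper's own prose derivation, where \(a^{\epsilon_rX_1}\) passes only the \(b\)-blocks with index \(s<r\) while \(a^{i_r}\) additionally passes \(b^{\epsilon_rX_2}\) — yields the system without the diagonal terms \(X_1X_2+\epsilon_rX_1j_r+i_rj_r\), whereas the displayed formula carries uniform upper limits \(s\le r\). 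You need to carry out the bookkeeping explicitly and state the exact inner-sum ranges your merges deliver, rather than asserting they match the display; as written, the one step you identified as the crux is the step that is both unproved and, for the formula exactly as displayed, not what your described merges give.
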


	\begin{proof}
		We proceed as in Example \ref{Heisenberg_group_eqn_ex}. Replacing each
		occurrence of \(X\) in \eqref{Heisenberg_eqn} with \(a^{X_1} b^{X_2}
		c^{X_3}\) gives
		\begin{align}
			(a^{X_1} b^{X_2} c^{X_3})^{\epsilon_1} a^{i_1} b^{j_1} c^{k_1}
			\cdots (a^{X_1} b^{X_2} c^{X_3})^{\epsilon_n} a^{i_n} b^{j_n} c^{k_n}
			= 1.
		\end{align}
		Since \(c\) is central, we can push all occurrences of \(c\) and \(c^{-1}\)
		to the right, and then freely reduce, thus showing that \ref{Heisenberg_eqn}
		is equivalent to
		\begin{align}
			\label{Z_system_Heisenberg_eqn_2}
			(a^{X_1} b^{X_2})^{\epsilon_1} a^{i_1} b^{j_1}
			\cdots (a^{X_1} b^{X_2})^{\epsilon_n} a^{i_n} b^{j_n} c^{\sum_{r = 1}^n
			(\epsilon_r X_3 + k_r)}
			= 1.
		\end{align}
		Note that for all \(x_1, \ x_2 \in \mathbb{Z}\), \((a^{x_1} b^{x_2})^{-1}
		= b^{-x_2} a^{-x_1} = a^{-x_1} b^{-x_2} c^{x_1 x_2}\). Using this,
		together with the fact that \(c\) is central, gives that
		\eqref{Z_system_Heisenberg_eqn_2} is equivalent to
		\begin{align}
			\label{Z_system_Heisenberg_eqn_3}
			a^{\epsilon_1 X_1} b^{\epsilon_1 X_2} a^{i_1} b^{j_1} \cdots
			a^{\epsilon_n X_1} b^{\epsilon_n X_2} a^{i_n} b^{j_n} c^{\sum_{r = 1}^n
			(\epsilon_r X_3 + k_r + \delta_r X_1 X_2)}
			= 1.
		\end{align}
		We now push all \(a\)s in \eqref{Z_system_Heisenberg_eqn_3} to the left.
		The \(a\)s at the beginning do not need to move. The \(a\)s with exponent
		\(i_1\) will need to move past \(b^{\epsilon_1 X_2}\), thus increasing
		the exponent of \(c\) by \(i_1 \epsilon_1 X_2\). The \(a\)s with
		exponent \(\epsilon_2 X_1\) will need to move past \(b^{j_1}\) and
		\(b^{\epsilon_1 X_2}\), thus increasing the exponent of \(c\) by
		\(j_1 \epsilon_2 X_1 + \epsilon_1 \epsilon_2 X_1 X_2\). This continues
		up to the \(a\)s with exponent \(i_n\), which will need to move past
		all \(b\)s, thus increasing the exponent of \(c\) by \(i_n (\sum_{r = 1}^n
		\epsilon_r X_2) + i_n \sum_{r = 1}^{n - 1} j_r\). Overall, we have that
		\eqref{Z_system_Heisenberg_eqn_3} is equivalent to
		\begin{align}
			\label{Z_system_Heisenberg_eqn_4}
			& a^{\displaystyle \sum_{r = 1}^n (\epsilon_r X_1 + i_r)} \\
			\nonumber
			&	b^{\displaystyle \sum_{r = 1}^n (\epsilon_r X_2 + j_r)} \\
			\nonumber
	 		& c^{\displaystyle \sum_{r = 1}^n (\epsilon_r X_3 + k_r + \delta_r X_1
	 		X_2) + \sum_{r = 1}^n \sum_{s = 1}^r ( \epsilon_r \epsilon_s X_1 X_2 +
	 		\epsilon_r X_1 j_s) + \sum_{r = 1}^n \sum_{s = 1}^r (i_r \epsilon_s X_2
	 		+ i_r j_s)}
			= 1.
		\end{align}
		Equating each of the exponents to \(0\) (as we are now in Mal'cev normal
		form) gives that the \(\mathbb{Z}\)-system of \eqref{Heisenberg_eqn} is
		\begin{align*}
			& \sum_{r = 1}^n (\epsilon_r X_1 + i_r) = 0 \\
			&	\sum_{r = 1}^n (\epsilon_r X_2 + j_r) = 0 \\
			& \sum_{r = 1}^n (\epsilon_r X_3 + k_r + \delta_r X_1
			X_2) + \sum_{r = 1}^n \sum_{s = 1}^r ( \epsilon_r \epsilon_s X_1 X_2 +
			\epsilon_r X_1 j_s) + \sum_{r = 1}^n \sum_{s = 1}^r (i_r \epsilon_s X_2
			+ i_r j_s) = 0. \qedhere
		\end{align*}
	\end{proof}

	We have now collected the results we need to prove the main theorem of this
	section.

	\begin{theorem}
		\label{Heisenberg_eqns_EDT0L_thm}
		Let \(L\) be the solution language to a single equation with one
		variable in the Heisenberg group, with respect to the Mal'cev generating
		set and normal form. Then
		\begin{enumerate}
			\item The language \(L\) is EDT0L;
			\item An EDT0L system for \(L\) is constructible in \(\ns(n \mapsto
			n^8(\log n)^2)\).
		\end{enumerate}
	\end{theorem}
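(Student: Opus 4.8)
The plan is to combine Lemma \ref{Z_system_lem}, which turns the equation into its $\mathbb{Z}$-system, with Theorem \ref{quad_eqn_2_var_integers_EDT0L_thm} on two-variable quadratic equations over $\mathbb{Z}$, after a split on the exponent sum $E = \sum_{r=1}^n \epsilon_r$. First I would bring the input equation into the shape $X^{\epsilon_1} a^{i_1} b^{j_1} c^{k_1} \cdots X^{\epsilon_n} a^{i_n} b^{j_n} c^{k_n} = 1$ of \eqref{Heisenberg_eqn} by collecting each maximal $X$-free block and rewriting it in Mal'cev normal form using Lemmas \ref{commutator_lem} and \ref{malcev_gen_rules_lem}; a single left-to-right pass does this, and writing $N$ for the length of the equation, each $i_r$ and $j_r$ has absolute value at most $N$, each $k_r$ at most $N^2$, and $n \le N$, so this costs only $O(\log N)$ space. (If $X$ does not occur in the equation, $L$ is either $\emptyset$ or the set of all Mal'cev words, both EDT0L, and we are done.) By Lemma \ref{Z_system_lem} the $\mathbb{Z}$-system then has first two equations $E X_1 + \sum_r i_r = 0$ and $E X_2 + \sum_r j_r = 0$, and a third equation in which $X_3$ occurs only through the term $E X_3$.

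If $E \ne 0$, the first two equations force $X_1$ and $X_2$, and substituting these values into the third makes it linear in $X_3$, which is then forced as well; hence the equation has at most one solution $(x_1, x_2, x_3)$, with $|x_1|,|x_2| = O(N)$ and $|x_3| = O(N^4)$, all computable (together with the requisite divisibility checks, on whose failure $L = \emptyset$) in $O(\log N)$ space, and a singleton or empty language is EDT0L with such a system constructible in $O(\log N)$ space. If $E = 0$, the first two equations reduce to the numerical conditions $\sum_r i_r = 0$ and $\sum_r j_r = 0$ (again $L = \emptyset$ if they fail), the term $E X_3$ disappears, and the third equation becomes a two-variable quadratic $A X_1 X_2 + B X_1 + C X_2 + D = 0$; reading off the coefficients from Lemma \ref{Z_system_lem} and using $n \le N$ together with $\sum_r |i_r|, \sum_r |j_r| \le N$ and $\sum_r |k_r| \le N^2$ gives $m := \max(|A|,|B|,|C|,|D|) = O(N^2)$. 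Since $X_3$ is unconstrained, in this case $L = \{a^{x_1} b^{x_2} c^{x_3} : A x_1 x_2 + B x_1 + C x_2 + D = 0,\ x_3 \in \mathbb{Z}\}$.

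Applying Theorem \ref{quad_eqn_2_var_integers_EDT0L_thm} with the coefficient tuple $(0, A, 0, B, C, D)$ yields an EDT0L system for $\{a^{x_1} \# b^{x_2} : A x_1 x_2 + B x_1 + C x_2 + D = 0\}$ constructible in $\ns(t \mapsto t^4 \log t)$ evaluated at $t = m = O(N^2)$. Using Lemma \ref{EDT0L_closure_properties_lem} I would then apply the constant-space free monoid homomorphism that deletes $\#$ and fixes the other letters, and concatenate on the right with the regular language $\{c\}^\ast \cup \{c^{-1}\}^\ast = \{c^{x_3} : x_3 \in \mathbb{Z}\}$; this produces exactly $L$ and preserves the space bound. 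Since the normal-form conversion, case analysis, computation of $A,B,C,D$, and closure steps are all negligible beside the $\ns(m^4 \log m) = \ns(O(N^8 \log N))$ cost of Theorem \ref{quad_eqn_2_var_integers_EDT0L_thm}, the whole construction fits within $\ns(n \mapsto n^8(\log n)^2)$. The main obstacle is exactly this complexity bookkeeping — verifying that the coefficients of the integer quadratic grow only quadratically in the input length and that nothing along the way exceeds what Theorem \ref{quad_eqn_2_var_integers_EDT0L_thm} already charges; conceptually, once Lemmas \ref{Z_system_lem} and \ref{EDT0L_closure_properties_lem} and Theorem \ref{quad_eqn_2_var_integers_EDT0L_thm} are available, the argument is a direct reduction.
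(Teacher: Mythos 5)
Your proposal is correct and follows essentially the same route as the paper: form the $\mathbb{Z}$-system via Lemma \ref{Z_system_lem}, split on $\sum_r \epsilon_r$, feed the resulting two-variable quadratic (with quadratically bounded coefficients) into Theorem \ref{quad_eqn_2_var_integers_EDT0L_thm} when the exponent sum vanishes, and finish with the closure operations of Lemma \ref{EDT0L_closure_properties_lem}, giving the same $\ns(n \mapsto n^8(\log n)^2)$ bookkeeping. The only deviations are harmless: you handle the $\sum_r \epsilon_r \neq 0$ case directly as a computable singleton (the paper instead invokes the linear-equation machinery of \cite{VAEP}), and you correctly concatenate with $\{c\}^\ast \cup \{c^{-1}\}^\ast$ to cover negative exponents of $c$.
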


	\begin{proof}
		Let
		\begin{equation}
		\label{Heisenberg_eqn_v2}
			X^{\epsilon_1} a^{i_1} b^{j_1} c^{k_1} \cdots X^{\epsilon_n} a^{i_n}
			b^{j_n} c^{k_n} = 1
		\end{equation}
		be an equation in the Heisenberg group in a single variable. By Lemma
		\ref{Z_system_lem}, we have that the \(\mathbb{Z}\)-system of
		\eqref{Heisenberg_eqn_v2} is
		\begin{align}
			\label{Z_system_Heisenberg_final}
			& \sum_{r = 1}^n (\epsilon_r X_1 + i_r) = 0 \\
			\nonumber
			&	\sum_{r = 1}^n (\epsilon_r X_2 + j_r) = 0 \\
			\nonumber
			& \sum_{r = 1}^n (\epsilon_r X_3 + k_r + \delta_r X_1
			X_2) + \sum_{r = 1}^n \sum_{s = 1}^r ( \epsilon_r \epsilon_s X_1 X_2 +
			\epsilon_r X_1 j_s) + \sum_{r = 1}^n \sum_{s = 1}^r (i_r \epsilon_s X_2
			+ i_r j_s) = 0.
		\end{align}
		We consider two cases: when \(\sum_{r = 1}^n \epsilon_r = 0\) and when
		\(\sum_{r = 1}^n \epsilon_r \neq 0\).

		Case 1: \(\sum_{r = 1}^n \epsilon_r = 0\). \\
		Applying our case assumption to \eqref{Z_system_Heisenberg_final} gives
		that \eqref{Z_system_Heisenberg_final} is equivalent to
		\begin{align}
			\label{Z_system_EDT0L_1}
			& \sum_{r = 1}^n i_r = 0 \\
			\nonumber
			&	\sum_{r = 1}^n j_r = 0 \\
			\nonumber
			& \sum_{r = 1}^n (k_r + \delta_r X_1
			X_2) + \sum_{r = 1}^n \sum_{s = 1}^r ( \epsilon_r \epsilon_s X_1 X_2 +
			\epsilon_r X_1 j_s) + \sum_{r = 1}^n \sum_{s = 1}^r (i_r \epsilon_s X_2
			+ i_r j_s) = 0.
		\end{align}
		The first two of the above identities only involve constants. If one of
		these is not satisfied, then \eqref{Heisenberg_eqn_v2} has no solutions.
		In such a case, \(L\) is empty, and there is nothing to prove. So we
		suppose that these are satisfied. It follows that they are redundant, and
		the above system is equivalent to the third equation in it (with the
		addition that \(X_3\) can be anything, regardless of \(X_1\) and \(X_2\)).
		Note that this is a quadratic equation in integers, with variables \(X_1\)
		and \(X_2\). So by Theorem \ref{quad_eqn_2_var_integers_EDT0L_thm}
		\[
			K = \{a^{x_1} \# b^{x_2} \mid (x_1, \ x_2) \text{ is part of a solution
			\eqref{Z_system_EDT0L_1} for } (X_1, \ X_2)\}
		\]
		is EDT0L, and accepted by an EDT0L system that is constructible in \(\ns(n
		\mapsto n^4 \log n)\) in terms of the coefficients of the equation. These
		are
		\[
			\sum_{r = 1}^n k_r + \sum_{r = 1}^n \sum_{s = 1}^r i_r j_s, \ \sum_{r =
			1}^n \delta_r + \sum_{r = 1}^n \sum_{s = 1}^r \epsilon_r \epsilon_s, \
			\sum_{r = 1}^n \sum_{s = 1}^r \epsilon_r j_s, \ \sum_{r = 1}^n \sum_{s =
			1}^r i_r \epsilon_s.
		\]
		Note that \(|\epsilon_r| = 1\) and \(|\delta_r| \leq 1\) for all \(r\). In
		addition, as exponents of constants in \eqref{Heisenberg_eqn_v2}, each sum
		\(\sum_{r = 1}^n i_r\), \(\sum_{r = 1}^n j_r\) and \(\sum_{r = 1}^n k_r\)
		is linear in our input. It follows that the above expression is quadratic
		in our input, and so an EDT0L system for \(K\) is constructible in \(\ns(n
		\mapsto n^8(\log n)^2)\). Applying the monoid homomorphism that maps
		\(\#\) to \(\varepsilon\), followed by concatenating the above language
		with the EDT0L language \(\{c\}^\ast\), which is constructible in constant
		space, allows us to apply Lemma \ref{EDT0L_closure_properties_lem} to show
		\[
			\{a^{x_1} b^{x_2} c^{x_3} \mid (x_1, \ x_2, \ x_3) \text{ is  a solution
			\eqref{Z_system_EDT0L_1}}\}
		\]
		is EDT0L, accepted by an EDT0L system that is constructible in \(\ns(n
		\mapsto n^8(\log n)^2)\). Since this language is \(L\), the result follows.

		Case 2: \(\sum_{r = 1}^n \epsilon_r \neq 0\). \\
		Let \(\alpha = \sum_{r = 1}^n \epsilon_r\), \(\beta = \sum_{r = 1}^n i_r\),
		\(\gamma = \sum_{r = 1}^n j_r\) and \(\zeta = \sum_{r = 1}^n k_r\). Then
		we can rewrite \eqref{Z_system_Heisenberg_final} as
		\begin{align}
			\label{Z_system_EDT0L_2}
			& \alpha X_1 + \beta = 0 \\
			\nonumber
			&	\alpha X_2 + \gamma = 0 \\
			\nonumber
			& \alpha X_3 + \zeta + \sum_{r = 1}^n \delta_r X_1
			X_2 + \sum_{r = 1}^n \sum_{s = 1}^r ( \epsilon_r \epsilon_s X_1 X_2 +
			\epsilon_r X_1 j_s) + \sum_{r = 1}^n \sum_{s = 1}^r (i_r \epsilon_s X_2
			+ i_r j_s) = 0.
		\end{align}
		If either of the first two equations have no solution, then neither
		does \eqref{Heisenberg_eqn_v2}, and so \(L\) is empty, and there is
		nothing to prove. We will therefore suppose that both of these equations
		admit a solution. Since these are both single linear equations with one
		variable, they can both admit a single solution. Let \(x_1\) be the
		solution for \(X_1\), and \(x_2\) be the solution for \(X_2\). Plugging
		these into the third equation gives
		\begin{align}
			\label{Z_system_EDT0L_3}
			\alpha X_3 + \zeta + \sum_{r = 1}^n \delta_r x_1
			x_2 + \sum_{r = 1}^n \sum_{s = 1}^r (\epsilon_r \epsilon_s x_1 x_2 +
			\epsilon_r x_1 j_s) + \sum_{r = 1}^n \sum_{s = 1}^r (i_r \epsilon_s x_2
			+ i_r j_s) = 0.
		\end{align}
		Note that this is a linear equation in integers with single variable
		\(X_3\). Hence by \cite{VAEP}, Corollary 3.13 and Proposition 3.16, the
		language
		\[
			M = \{c^{x_3} \mid x_3 \text{ is a solution to }
			\eqref{Z_system_EDT0L_3}\}
		\]
		is EDT0L, and accepted by an EDT0L system that is constructible in
		non-deterministic quadratic space in terms of an input of length
		\[
			|\alpha| + |\zeta| + \sum_{r = 1}^n |\delta_r x_1 x_2| +
			\sum_{r = 1}^n \sum_{s = 1}^r (|\epsilon_r \epsilon_s x_1 x_2| +
			|\epsilon_r x_1 j_s|) + \sum_{r = 1}^n \sum_{s = 1}^r (|i_r \epsilon_s
			x_2| + |i_r j_s|).
		\]
		As the sums of the lengths of constants in our original equation,
		\(|\alpha|\), \(|\beta|\), \(|\gamma|\) and \(|\zeta|\) are all linear in
		our input. As the number of constants in our equation, \(n\) is also linear
		in our input. We have that \(|x_1| = \left|\frac{\beta}{\alpha} \right| \leq
		|\beta|\) and \(|x_2| = \left| \frac{\gamma}{\alpha} \right| \leq |\alpha|\)
		are both linear in our input. Since \(|\epsilon_r| = 1\) and \(|\delta_r|
		\leq 1\) for all \(r\), and the above expression is quartic in our input, it
		follows that \(M\) is constructible in \(\ns(n \mapsto n^4)\). Applying
		Lemma \ref{EDT0L_closure_properties_lem} to concatenate \(M\) with the
		singleton language \(\{a^{x_1} b^{x_2}\}\), which is constructible in linear
		space, gives that
		\[
			\{a^{x_1} b^{x_2} c^{x_3} \mid (x_1, \ x_2, \ x_3) \text{ is a solution
			to } \eqref{Z_system_EDT0L_1}\}
		\]
		is EDT0L, and accepted by an EDT0L system that is constructible in
		\(\ns(n \mapsto n^4)\). Since this language is \(L\), the
		result follows.
	\end{proof}

\section*{Acknowledgments}
  I would like to thank Laura Ciobanu for incredibly helpful mathematical
  discussions and writing advice. I would like to thank Ross Paterson for
  answering various number theoretic questions, and for running simulations to
  test some ideas. I would like to thank Luke Elliott for some helpful comments
  on an earlier version. I would like to thank the reviewer for some helpful
  comments. I would like to thank the London Mathematical Society, the
  Heilbronn Institute for Mathematical Research	and the University of St
  Andrews for their support during the writing of this paper.

\nocite{OEIS}
\bibliography{references}
\bibliographystyle{abbrv}
\end{document}